\documentclass[11pt]{amsart}
\usepackage[english]{babel}
\usepackage{hyperref}
\usepackage{setspace}
\usepackage{url}
\usepackage{xcolor}
\usepackage{mathtools}
\usepackage{amssymb}
\usepackage{amsfonts}
\usepackage{comment}
\usepackage{amstext}
\usepackage{amsmath}
\usepackage{amsthm}
\usepackage[left=3cm, right=3cm, top=2.5cm, bottom=2.5cm]{geometry}
\newtheorem{theorem}{Theorem}[section]
\newtheorem{Definition}{Definition}[section]
\newtheorem{Corollary}{Corollary}[section]
\newtheorem{prop}{Proposition}[section]

\newtheorem{Remark}[theorem]{Remark}
\newcommand{\m}{\mu}
\newcommand{\p}{\phi}
\newcommand{\e}{\eta}
\newcommand{\la}{\lambda}
\newcommand{\D}{\Delta}
\newcommand{\ep}{\epsilon}

\newtheorem*{theorem*}{Theorem}
\newtheorem*{Corollary*}{Corollary}

\numberwithin{equation}{section}

\title[The Heat Kernel on an Invariant Subspace of 1-forms on $\mathbb{H}^{n}$]{ The Heat Kernel of the Hodge Laplacian on an Invariant Subspace of 1-forms on the Hyperbolic Space $\mathbb{H}^{n}$}

\author{PETROS SIASOS} 

\address{PETROS SIASOS, DEPARTMENT OF MATHEMATICS, UNIVERSITY OF THESSALY, 3rd km Old National Road Lamia – Athens, 35100, Lamia, GREECE}

\email{psiasos@uth.gr}

\begin{document}

\keywords{Heat kernel, Laplacian on forms, real hyperbolic space, Millson formula, uniform estimates. This work was partially supported by the University of Cyprus Internal Grant of Nelia Charalambous.}
\date{\today}
\subjclass[2000]{Primary 58J35; Secondary 53C21, 58J50}

\begin{abstract}
In this paper, we construct explicit formulas for the heat kernel of the Hodge Laplacian restricted to an invariant subspace of $1$-forms on the real hyperbolic space $\mathbb{H}^{n}$. Using warped product decompositions, the problem is reduced to component operators on specific subspaces, yielding precise closed-form expressions for both odd and even dimensions. We further establish Millson-type recurrence relations that connect heat kernels across differing dimensions. Based on these explicit representations, we derive sharp uniform estimates and long-time asymptotic behavior, extending classical scalar heat kernel bounds to $1$-forms on real hyperbolic spaces.
\end{abstract}

\maketitle
\tableofcontents

\section{Introduction}
On the real hyperbolic space $\mathbb{H}^n$,  due to the invariance of the isometry group, the heat kernel of the Laplace-Beltrami operator $\D_{\mathbb{H}^n}$ depends only on $\rho=d_{\mathbb{H}^n}(x,y)$ and its explicit formula is well-established (see \cite{davies1988heat}, \cite{grigor1998heat}).  In his PhD thesis \cite{Awonusika}, Awonusika recomputed this formula using the Green's function of $\D_{\mathbb{H}^n}$  and the generalized Mehler-Fock transform. Specifically, for $t > 0$, $0 < \rho < \infty$, the heat kernel $K_{\mathbb{H}^n}(t,\rho)$ associated to $\Delta_{\mathbb{H}^n}$ is given by 
\begin{itemize}
\item[(a)] $n$ odd $n \ge 3$:
   \begin{equation} \label{6.40}
    K_{\mathbb{H}^n}(t,\rho) = \left( -\frac{1}{2\pi \sinh \rho} \frac{\partial}{\partial \rho} \right)^{\frac{n-1}{2}} e^{-\frac{(n-1)^2}{4}t} K_{\mathbb{H}^1}(t,\rho),
   \end{equation} where $
    K_{\mathbb{H}^1}(t,\rho) = \frac{1}{\sqrt{4\pi t}} e^{-\frac{\rho^2}{4t}}$ 

\item[(b)] $n$ even $n \ge 2$:
    \begin{equation}\label{6.41}
    K_{\mathbb{H}^n}(t,\rho) = \left( -\frac{1}{2\pi \sinh \rho} \frac{\partial}{\partial \rho} \right)^{\frac{n-2}{2}} e^{-\frac{n(n-2)}{4}t} K_{\mathbb{H}^2}(t,\rho), 
    \end{equation} where $ 
    K_{\mathbb{H}^2}(t,\rho) =\frac{\sqrt{2}}{(4\pi t)^{\frac{3}{2}}} e^{-\frac{t}{4}} \int_\rho^\infty \frac{u e^{-\frac{u^2}{4t}}}{\sqrt{\cosh u - \cosh \rho}} \, du, $ is the heat kernel in $\mathbb{H}^2$.
\end{itemize}

Extending these heat kernel formulations to differential forms constitutes a crucial development for understanding global topological and geometric invariants within spectral geometry and global analysis, with far-reaching implications for non-smooth and singular spaces. Over recent decades, analogous formulas for the Hodge Laplacian heat kernel on Riemannian manifolds have been explored from various perspectives. In his PhD thesis \cite{Pedon1997}, Pedon investigated the harmonic analysis of differential forms on real hyperbolic spaces $\Lambda^p (\mathbb{H}^n)$, employing group-theoretic tools, generalized spherical functions, and the Abel transform to construct explicit heat kernel formulas. Jones in his PhD thesis \cite{Jones2008},  defined the Mehler–Fock transform on 1-forms and used it to find a formula for the heat kernel over the real hyperbolic plane $\mathbb{H}^2 $, expressing it in a unified way using differential and Hodge star operators. More recently, W. Bauer, K. Furutani, and C. Iwasaki \cite{Heisenberg.forms} studied the heat kernel of the Hodge Laplacian on forms over the Heisenberg group $\mathbb{H}_{2n+1} $, obtaining exact expressions inductively by reducing the problem to systems of ordinary differential equations.  

Our aim in this paper is to construct an explicit analog formula for the Hodge Laplacian heat kernel restricted to an invariant subspace of 1-forms on the real hyperbolic space $\mathbb{H}^n$. Furthermore, we use this explicit formula to establish sharp heat kernel estimates on this invariant subspace, generalizing the bounds established in \cite{davies1988heat}.

To describe the content and main results of the paper, we introduce some notation. Let $\theta^1, \dots, \theta^{n-1}$ be co-closed $1$-forms on $\mathbb{S}^{n-1}$ belonging to the eigenspace $E_{2(n-2)}(\Lambda^1 \mathbb{S}^{n-1})$ such that  $\{\bar{\theta}^1, \dots, \bar{\theta}^{n-1}, d\rho\}$, where $\bar{\theta}^i = \sinh \rho \, \theta^i$, is a pointwise orthonormal set of 1-forms on $\mathbb{H}^n$. We consider the subspace 
$$
W:= \text{span} \{\bar{\theta}^1, \dots, \bar{\theta}^{n-1}, d \rho\}
$$
of $ \Lambda^1 (\mathbb{H}^n)$ and set 
$$
A_i:=\{h(\rho) \bar{\theta}^i:  h(\rho) \in C^\infty[0,\infty) \}
$$
and 
$$
B:=\{h(\rho) d\rho: h(\rho) \in C^\infty[0,\infty) \}.
$$
As we see, the heat kernel on 1-forms in the Hyperbolic space $\mathbb{H}^n$ for the subspace W reduces to the computation of the heat kernels of  $\vec{\Delta}^{(1)}_{\mathbb{H}^n}\vert_{ A_i}$ and $\vec{\Delta}^{(1)}_{\mathbb{H}^n}\vert_{ B}$, where $\vec{\Delta}^{(1)}_{\mathbb{H}^n}$ is the Hodge Laplacian acting on 1-forms.

\begin{theorem*} (see Corollary \ref{heatkern.1-forms}) 
The heat kernel $\vec{K}^{(1)}_{\mathbb{H}^n} \vert_{ A_i} (t,\rho)=K_{1,\mathbb{H}^n} \vert_{ A_i}  (t, \rho)  \sinh \rho \ \theta^i$ of  $\vec{\Delta}^{(1)}_{\mathbb{H}^n}  \vert_{ A_i}$ is given by 
 \begin{itemize}
     \item [(a)] $n$ odd, $n \geq 1$:
     \begin{equation*} 
K_{1,\mathbb{H}^n} \vert_{ A_i}  (t, \rho)  =
\frac{\sinh  \rho \; e^{-\frac{(n-3)^2}{4}t} }{(2\pi)^{\frac{n+2}{2}}2^\frac{3}{2}\sqrt{t}}  
\left(-\frac{1}{\sinh\rho}\frac{\partial}{\partial\rho}\right)^{\frac{n+1}{2}}e^{-\frac{\rho^2}{4t}},
\end{equation*}
\item [(b)] $n$ even, $n \geq 2$:
$$
K_{1,\mathbb{H}^n} \vert_{ A_i}  (t, \rho)  
= \frac{\sinh \rho\; e^{-\frac{(n-3)^2}{4}t}}{8\sqrt{2}\pi^{\frac{3}{2}}t^{\frac{3}{2}}} \left(-\frac{1}{2\pi \sinh \rho} \frac{\partial}{\partial \rho}\right)^{\frac{n}{2}} \int_{\rho}^{\infty} \frac{u e^{-\frac{u^2}{4t}} \, du}{\sqrt{\cosh u - \cosh \rho}}.
$$
 \end{itemize}
Moreover, it satisfies the following Millson-type recurrence formula 
\begin{equation*}
   K_{1,\mathbb{H}^n} \vert_{ A_i}  (t, \rho)  = -\frac{e^{-(n-4)t}}{2\pi} \frac{\partial}{\partial \rho} \left[ \frac{K_{1, \mathbb{H}^{n-2}} \vert_{ A_i} (t, \rho)}{\sinh \rho} \right].
\end{equation*}
\end{theorem*}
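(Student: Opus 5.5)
The plan is to reduce the radial heat equation on $A_i$ to the \emph{scalar} heat equation on $\mathbb{H}^{n+2}$. Comparing exponents in (a) with \eqref{6.40} for dimension $n+2$ suggests
$$K_{1,\mathbb{H}^n}\vert_{A_i}(t,\rho)=c_n\,\sinh\rho\; e^{2(n-1)t}\,K_{\mathbb{H}^{n+2}}(t,\rho),$$
since $\frac{(n+1)^2-(n-3)^2}{4}=2(n-1)$. The same identity appears to hold in the even case when compared with \eqref{6.41}. So I would prove this identity, with $c_n$ an explicit constant, and then read off (a), (b) and the recurrence from the known scalar formulas.

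\textbf{Step 1: the radial operator.} Write $\mathbb{H}^n=(0,\infty)\times\mathbb{S}^{n-1}$ with metric $d\rho^2+\sinh^2\rho\, g_{\mathbb{S}^{n-1}}$. Take $\omega=h(\rho)\bar\theta^i=f(\rho)\theta^i$ with $f=h\sinh\rho$.

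Using the warped product formulas for $d$ and $\delta$ on 1-forms, I will use two facts:
\begin{itemize}
\item since $\theta^i$ is co-closed, $\delta\omega=0$;
\item $d\omega=f'\,d\rho\wedge\theta^i+f\,d_{\mathbb{S}}\theta^i$.
\end{itemize}
Computing $\delta d\omega$ and using $\Delta_{\mathbb{S}^{n-1}}\theta^i=2(n-2)\theta^i$, I expect
$$\vec{\Delta}^{(1)}_{\mathbb{H}^n}(f\theta^i)=\Big(-f''-(n-3)\coth\rho\, f'+\frac{2(n-2)}{\sinh^2\rho}f\Big)\theta^i.$$
This shows that $A_i$ is invariant and gives a one-dimensional operator $L_n$ acting on $f$.

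\textbf{Step 2: conjugation.} Substitute $f=\sinh^2\rho\; u$, which matches $h=\sinh\rho\cdot K_{\mathbb{H}^{n+2}}$. Expanding $L_n(\sinh^2\rho\,u)/\sinh^2\rho$ and using $\coth^2\rho=1+\sinh^{-2}\rho$, the $\sinh^{-2}\rho$ terms should cancel exactly; this cancellation is where the eigenvalue $2(n-2)$ is used. What should remain is
$$-u''-(n+1)\coth\rho\, u'-2(n-1)u,$$
that is, the radial part of $\Delta_{\mathbb{H}^{n+2}}$ shifted by the constant $-2(n-1)$. Consequently $\sinh^2\rho\, e^{2(n-1)t}K_{\mathbb{H}^{n+2}}(t,\rho)$ solves the radial heat equation for $L_n$, and the form $h\,\bar\theta^i$ with $h=\sinh\rho\,e^{2(n-1)t}K_{\mathbb{H}^{n+2}}$ solves the heat equation on $A_i$.

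\textbf{Step 3: normalization (main obstacle).} The delicate point is to confirm that this solution is the heat kernel itself, with the correct constant $c_n$, and not merely a solution.
\begin{itemize}
\item The prefactor $\sinh\rho$ on $h$ vanishes at $\rho=0$, which is the natural regularity for the angular component $\bar\theta^i$.
\item I would fix $c_n$ by matching the small-time behaviour near the diagonal against the Euclidean 1-form kernel $(4\pi t)^{-n/2}e^{-\rho^2/4t}$, using the leading term of $K_{\mathbb{H}^{n+2}}$.
\item Uniqueness follows from the $L^2$ self-adjointness of $L_n$ and the Gaussian decay.
\end{itemize}
I would then insert \eqref{6.40} or \eqref{6.41} for dimension $n+2$ and collect powers of $2\pi$ and $t$ to obtain the stated constants in (a) and (b).

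\textbf{Step 4: the recurrence.} Apply the scalar Millson relation $K_{\mathbb{H}^{m+2}}=-\frac{e^{-mt}}{2\pi\sinh\rho}\partial_\rho K_{\mathbb{H}^m}$ with $m=n$. This relation follows directly from \eqref{6.40}–\eqref{6.41}. Then substitute $K_{\mathbb{H}^n}=K_{1,\mathbb{H}^{n-2}}\vert_{A_i}/(c_{n-2}\sinh\rho\,e^{2(n-3)t})$.

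For the exponentials, $-n+2(n-1)-2(n-3)=-(n-4)$, which is exactly the factor $e^{-(n-4)t}$ in the stated recurrence. It remains to check that the constants $c_n$ and $c_{n-2}$ combine correctly.
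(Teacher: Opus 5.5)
Your Steps 1, 2 and 4 are correct, and Step 2 is a genuinely different and much shorter route than the paper's. The substitution $f=\sinh^2\rho\,u$ does turn the $A_i$-operator into $\D_{\mathbb{H}^{n+2}}^{\textbf{rad}}-2(n-1)$; the $\sinh^{-2}\rho$ terms cancel precisely because the eigenvalue is $2(n-2)$. The map $u\mapsto h=\sinh\rho\,u$ (with $h$ the coefficient of $\bar\theta^i$) takes $L^2(\sinh^{n+1}\rho\,d\rho)$ isometrically onto $L^2(\sinh^{n-1}\rho\,d\rho)$, so this is in effect a unitary equivalence. It also explains the spectrum $[\frac{(n-3)^2}{4},\infty)$ and why $\sigma_{\mathbb{H}^{n+2}}$ appears in \eqref{6.54'}. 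The paper reaches the corresponding identity \eqref{(4)}, $K_{1,\mathbb{H}^n}\vert_{A_i}=\frac12\sinh\rho\,e^{2(n-1)t}K_{\mathbb{H}^{n+2}}$, only at the end of a long chain: the Green function via Legendre $Q$-functions, the SUSY factorization with $D=\partial_\rho+(n-1)\coth\rho$ giving \eqref{Green.rel.}, the resolvent jump \eqref{6.56'}, Stone's formula, and comparing \eqref{6.68} (for $n+2$) with \eqref{6.68'}. Your Step 4 is the paper's argument for the recurrence.

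The gap is Step 3, and it is not a technicality. The explicit constants in (a) and (b) say exactly that $c_n=\frac12$ for every $n$, and your proposed way of fixing $c_n$ cannot work. Near $\rho=0$ as $t\to0^+$, your candidate behaves like $c_n\,\rho\,(4\pi t)^{-\frac{n+2}{2}}e^{-\rho^2/4t}$. Its ratio to $(4\pi t)^{-\frac n2}e^{-\rho^2/4t}$ is $c_n\rho/(4\pi t)$, which is not a constant. The object in the statement is a single-pole radial function that vanishes at the pole, not a near-diagonal coefficient of the 1-form heat kernel. Pairing it with $g(\rho)\sinh^{n-1}\rho\,d\rho$ (for $g(0)=0$) gives, as $t\to0^+$, a multiple of $g'(0)$, so its initial datum is of derivative-of-delta type. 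Both the matching and the uniqueness claim in Step 3 therefore need a prescribed normalization of that singular datum, which you do not supply.

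In the paper, this normalization is built into the spectral representation. Through \eqref{Green.rel.}, the Green function inherits a $\rho^{1-n}$ singularity whose coefficient carries the factor $\ell_0(s)^{-1}$. This leads to \eqref{6.54'}, whose constant $\frac{2^{n-2}\Gamma(\frac{n+2}{2})}{\pi^{\frac n2+2}}$ is exactly half the constant in \eqref{6.54} for $\mathbb{H}^{n+2}$. The choice $h(r)=((\frac{n-1}{2})^2+r^2)e^{-r^2t}e^{-\frac{(n-3)^2}{4}t}$ then cancels the $1/\ell_0$ and yields $c_n=\frac12$.

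As written, your argument shows only that the right-hand sides of (a) and (b) solve the heat equation on $A_i$, not that they equal the kernel with the stated constants. The recurrence additionally requires $c_n=c_{n-2}$, which you leave open. To close the gap, adopt a normalization explicitly. Either define the kernel through \eqref{6.54'} and compare it with \eqref{6.54} in dimension $n+2$, or prescribe the dipole-type initial condition and compute $c_n$ from it.
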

As a direct consequence of the explicit formulas above, we obtain the following sharp bounds and asymptotic behavior.

\begin{Corollary*}(see Corollary \ref{D-V EST. 1-FORMS}) 
If $n \geqslant 1$ is any integer, then
\begin{equation*}
K_{1,\mathbb{H}^n} \vert_{ A_i} (t,\rho) \sim t^{-\frac{n+2}{2}} \rho \, (1+\rho+t)^{\frac{n-1}{2}}   \cdot e^{ -\frac{(n-3)^2}{4}t - \frac{\rho^2}{4t} - \frac{n-1}{2}\rho },
\end{equation*}
uniformly for $0 \leqslant \rho < \infty$ and $0 < t < \infty$. Also, for each fixed $\rho > 0$ and $n \geqslant 1$,$$K_{1,\mathbb{H}^n}\vert_{ A_i} (t,\rho) \sim t^{-\frac{3}{2}} e^{-\frac{(n-3)^2}{4} t} \quad \text{as } t \to \infty.$$
\end{Corollary*}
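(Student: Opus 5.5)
The plan is to reduce the estimate to the known Davies--Mandouvalos bound for the scalar heat kernel. The tool is a Millson-type relation between $K_{1,\mathbb{H}^n}\vert_{A_i}$ and a scalar kernel in a higher dimension. Comparing the explicit formulas in the Theorem with \eqref{6.40}--\eqref{6.41}, the operator $\left(-\frac{1}{\sinh\rho}\partial_\rho\right)^{\frac{n+1}{2}}$ acting on $e^{-\rho^2/4t}$ is, up to constants and the factor $\sqrt{4\pi t}$, exactly what produces $K_{\mathbb{H}^{n+2}}(t,\rho)$. In odd dimension one gets
\begin{equation*}
K_{1,\mathbb{H}^n}\vert_{A_i}(t,\rho)=c_n\,\sinh\rho\; e^{\left(\frac{(n+1)^2}{4}-\frac{(n-3)^2}{4}\right)t}\,K_{\mathbb{H}^{n+2}}(t,\rho),
\end{equation*}
with an explicit positive constant $c_n$. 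In even dimension the same identity holds, obtained from \eqref{6.41} with $n$ replaced by $n+2$: the power $\frac{n}{2}$ of $-\frac{1}{2\pi\sinh\rho}\partial_\rho$ and the factor $t^{-3/2}\int_\rho^\infty\cdots$ match $K_{\mathbb{H}^2}$. I will first verify these identities carefully, including the constants and the exponential factors in $t$, checking that the recurrence in the Theorem is consistent with the scalar Millson formula.

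Next I invoke the Davies--Mandouvalos estimate from \cite{davies1988heat}, uniform in $\rho\ge0$ and $t>0$:
\begin{equation*}
K_{\mathbb{H}^{N}}(t,\rho)\sim t^{-\frac N2}(1+\rho+t)^{\frac{N-3}{2}}(1+\rho)\,e^{-\frac{(N-1)^2}{4}t-\frac{\rho^2}{4t}-\frac{N-1}{2}\rho}.
\end{equation*}
Apply it with $N=n+2$. The $t$-exponent becomes $-\frac{(n+1)^2}{4}t+\frac{(n+1)^2-(n-3)^2}{4}t=-\frac{(n-3)^2}{4}t$, which is the desired one. The spatial exponential is $e^{-\frac{n+1}{2}\rho}$, and the power factor is $t^{-\frac{n+2}{2}}(1+\rho+t)^{\frac{n-1}{2}}(1+\rho)$.

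It remains to absorb the prefactor. Since $\sinh\rho\,(1+\rho)\,e^{-\rho}\sim\rho$ uniformly on $[0,\infty)$ (compare $\rho$ near $0$ with $\frac12 e^{\rho}(1+\rho)e^{-\rho}\sim1+\rho\sim\rho$ for large $\rho$), the product $\sinh\rho\cdot(1+\rho)e^{-\frac{n+1}{2}\rho}$ is equivalent to $\rho\,e^{-\frac{n-1}{2}\rho}$. This gives exactly the claimed two-sided bound.

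The long-time asymptotic follows by fixing $\rho>0$ and letting $t\to\infty$. Then $(1+\rho+t)^{\frac{n-1}{2}}\sim t^{\frac{n-1}{2}}$ and $e^{-\rho^2/4t}\to1$, so the bound becomes $t^{-\frac{n+2}{2}+\frac{n-1}{2}}=t^{-3/2}$ times $e^{-\frac{(n-3)^2}{4}t}$.

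The main obstacle is the low-dimensional case $n=1$, where $N=3$ and the identification should be checked directly, together with the precise constant bookkeeping in the even case. Note that the Davies--Mandouvalos bound is valid for all $N\ge1$ and hence for $N=n+2\ge3$, so no further case split is needed once the identification is established.
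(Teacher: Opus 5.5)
Your proposal is correct and is essentially the paper's proof. The paper also writes $K_{1,\mathbb{H}^n}\vert_{A_i}(t,\rho)=\tfrac12\sinh\rho\,e^{2(n-1)t}K_{\mathbb{H}^{n+2}}(t,\rho)$, so your $c_n=\tfrac12$ in both parities; it then applies the Davies--Mandouvalos bound in dimension $n+2$, absorbs $\sinh\rho\,(1+\rho)\,e^{-\rho}\sim\rho$, and reads off the long-time statement exactly as you do.

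One small correction: the Davies--Mandouvalos two-sided bound holds for $N\ge 2$, not $N\ge1$ (it fails for $N=1$, where the kernel is a pure Gaussian). This is harmless because you only use $N=n+2\ge3$, and your dimension-indexed form with $e^{-\frac{(N-1)^2}{4}t}$ is the correct normalization.
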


Having established the results for $\vec{\Delta}^{(1)}_{\mathbb{H}^n}\vert_{ A_i}$, the results for $\vec{\Delta}^{(1)}_{\mathbb{H}^n}\vert_{ B}$  follow from the following observation. The heat kernel $\vec{K}^{(1)}_{\mathbb{H}^n} \vert_{ B} (t,\rho)=K_{1,\mathbb{H}^n}  \vert_{ B} (t, \rho)\; d \rho $ of  $\vec{\Delta}^{(1)}_{\mathbb{H}^n}  \vert_{ B}$  satisfies 
$$
K_{1,\mathbb{H}^n} \vert_{ B}  (t, \rho)  =e^{-(n-2)t} K_{1,\mathbb{H}^n} \vert_{ A_i}  (t, \rho)
$$
(see Remark \ref{rem. 4.2}). Thus, the above Theorem and its Corollary for $\vec{\Delta}^{(1)}_{\mathbb{H}^n}  \vert_{ B}$  are obtained by replacing $e^{-\frac{(n-3)^2}{4}t}$ with $e^{-\frac{(n-1)^2}{4}t}$in the formulas of $K_{1,\mathbb{H}^n} \vert_{ A_i}  (t, \rho)$ and the Millson formula takes the following form
\begin{equation*}
   K_{1,\mathbb{H}^n} \vert_{ B}  (t, \rho)  = -\frac{1}{2\pi} \frac{\partial}{\partial \rho} \left[ \frac{K_{1, \mathbb{H}^{n-2}} \vert_{ B} (t, \rho)}{\sinh \rho} \right].
\end{equation*}

The paper is structured as follows. In Section 2, we establish the necessary preliminaries, reviewing the definitions of the Hodge Laplacian and heat kernels on differential forms, as well as the geometric structure of real hyperbolic space \(\mathbb{H}^n\), its spherical functions, and spectral properties. In Section 3, we analyze the decomposition of the Hodge Laplacian on 1-forms over $\mathbb{H}^n$ using warped product formulas. We introduce the invariant subspace $W$ and express the decomposition of $\vec{\Delta}^{(1)}_{\mathbb{H}^n}\vert_{ W}$ in terms of $\vec{\Delta}^{(1)}_{\mathbb{H}^n}\vert_{ A_i}$ and $\vec{\Delta}^{(1)}_{\mathbb{H}^n}\vert_{ B }$. Section 4 reviews the scalar heat kernel on \(\mathbb{H}^n\) using the Mehler-Fock transform and develops the framework for constructing the requisite Green's functions. In Section 5, we apply these methods to explicitly determine the heat kernel on 1-forms on hyperbolic space \(\mathbb{H}^n\) for the subspace \(W\), utilizing a supersymmetric framework to connect the radial Laplacian with component operators and deriving the explicit heat kernel formulas and Millson-type formulas for both odd and even dimensions. Finally, the Appendix provides a review of the supersymmetric framework and auxiliary operator relations utilized in the spectral analysis.

\section*{Acknowledgements}
I would like to thank my advisor Nelia Charalambous for suggesting me to work on this problem and also for her guidance and support for the completion of this project. 

\section{Preliminaries}

\subsection{The Laplacian and heat kernel on forms}
Let $(M,g)$ be a Riemannian manifold which is orientable. We consider the space of $k$-forms on $M$, denoted by $ \Lambda^k (M)$, which are smooth sections of the bundle $\Lambda^k(T^*M) $. The Riemannian metric $g$ induces a pointwise inner product  $<,>$ on $\Lambda^k(M)$  and this inner product gives us a global inner product $(,)$ on $ \Lambda^k (M)$ by
$$
(a,b)= \int_M <a,b>dV_g
$$from which we define $ L^2( \Lambda^k(M))$.
\begin{Definition}
Let $n$ be the dimension of $M$. The Hodge star operator $*$ is defined as
\begin{align*}
* : \Lambda^k(M) &\rightarrow \Lambda^{n-k} (M)\\
a&\mapsto *a
\end{align*}where $*$a is the unique element of $\Lambda^{n-k} (M) $ s.t. $a \wedge b=<*a,b>dV_g$  for every $b \in \Lambda^{n-k} (M) $.
\end{Definition}
Let us note that the pointwise inner product on $\Lambda^k(M)$ is related to the Hodge star operator by $<a,b>dV_g= a \wedge *b$. We have the following operators on $k$-forms. $d^k :\Lambda^k (M) \to \Lambda^{k+1} (M)$  is the exterior differentiation and $ \delta^{k+1} : \Lambda^{k+1} (M) \to \Lambda^k (M)$ is the co-differential operator defined by $\delta^{k+1}=(-1)^{nk+1}*d \;*$, with $*$ the Hodge star operator and $n=\text{dim}M$. Note that $d$ is independent of the metric, but $\delta$ does depend on the metric since $*$ does. The Laplacian on $k$-forms on $(M,g)$ is defined as follows:
\begin{Definition} (\cite{rosenberg1997laplacian} p. 33)
The Laplacian on smooth k-forms on a $(M,g)$ is given by $\vec\Delta^{(k)}=\delta^{k+1}\circ d^k + d^{k-1}\circ \delta^k$.
\end{Definition}
\begin{Definition}(\cite{rosenberg1997laplacian} p.34)
A double form $\vec{p}(t,x,y) \in (0,\infty) \times \Lambda^k (M) \otimes \Lambda^k (M) $ which is smooth in $t,x,y$ is called the heat kernel for the Laplacian on k-forms if 
\begin{itemize}
\item[(i)] $ (\partial_t+\vec\Delta^k_x)\vec{p}(t,x,y)=0$ and

\item[(ii)] $\lim_{t \to 0^+} \int_M <\vec{p}(t,x,y),\omega(y)>_y dy = \omega (x)$ for every smooth with compact support k-form $\omega$.
\end{itemize}
\end{Definition}
Locally, if we have a chart $\{x^1, \dotsc, x^n\}$  on $(M,g)$, then $\{dx^{i_1} \wedge \dotsm  \wedge dx^{i_k}: 1\leq i_1< \cdots < i_k \leq n \}$ is a basis for $\Lambda^k (M)$. For simplicity, we will use the multi-index notation $I=(i_1,\dots,i_k)$ to express $ dx^{i_1} \wedge \dotsm  \wedge dx^{i_k}$ as $dx^I$. Thus, the heat kernel takes the form 
\begin{equation}
\vec{p}(t,x,y)= \sum_{I,J} f_{I,J} (t,x,y)dx^I\otimes dy^J, \label{eq1heatdec}
\end{equation}
locally, where $I,J$ run over a local basis of $k$-forms for some $f_{I,J}$. Thus, to determine the heat kernel on forms locally it is enough to determine the $f_{I,J}$ for a pointwise basis of $k$-forms. Let us note that $f_{I,J}$ form a rectangular matrix of dimension equal to $ \binom{n}{k}$ for the case of $k$-forms. So, for the special case of $ 1$-forms we have that the $f_{I,J}$ form an $n\times n $ - matrix. Finally, using (\ref{eq1heatdec})  we have the following expression for the integral in $(ii)$ 
\begin{multline}
    \int_M <\vec{p}(t,x,y),\omega(y)>_y dy= \\ \sum_I \left[ \int_M <\sum_J f_{I,J}(t,x,y)dy^J, \sum_K \omega_K(y)dy^K>dy\right] dx^I. \label{eq2form}
\end{multline}

As an example of a heat kernel on forms we will compute it for the case of $1$-forms over the Euclidean space $(\mathbb{R}^n, g_0)$. Let $\{x^1,\dots, x^n\}$  be the Cartesian coordinates of $\mathbb{R}^n$  and $\{dx^1,\dots,dx^n\}$ be the corresponding co-frame. The heat kernel with respect to this co-frame is written as
$$
\vec{p}(t,x,y)=\sum _{i,k} e_{i,k}(t,x,y)dx^i \otimes dy^k.
$$
We will show that $e_{i,k}=0$ for $i\neq k$ and $e_{i,i} $  are all equal with the heat kernel on functions. Any $1$-form $\omega$ can be written as $\omega=\sum_i f_i dx^i$. Since $\vec{\Delta}_{\mathbb{R}^n}^{(1)}(f dx^i)=(\Delta_{\mathbb{R}^n} f) dx^i$  we have that the heat equation
$$
    \begin{cases}
     \left( \partial_t +\vec{\Delta}^{(1)}_{\mathbb{R}^n}\right) \omega(t,x)=0 \\
     \;\omega (x, 0)= \omega_0
    \end{cases}
$$
where 
$$\omega(t,x)=\sum_i f_i(t,x)dx^i, \;\; \omega_0(x)=\sum_i f_i^0(x)dx^i$$
is equivalent to
$$
    \begin{cases}
     ( \partial_t +\Delta_{\mathbb{R}^n}) f_i(t,x)=0 \\
     \;f_i(x, 0)= f_i^0(x)
    \end{cases}, i=1,\dots , n.
$$
From the above it is clear that the initial condition $$\omega_0(x)= f_1^0(x)dx^1+0 \cdot  dx^2+\cdots +0 \cdot dx^n$$ has solution
$$\omega(t,x)=f_1(t,x)dx^1+0 \cdot  dx^2+\cdots +0 \cdot dx^n.$$ At the same time (\ref{eq2form}) gives
$$
\omega(t,x)=\sum_{i=1}^n\left[\sum_{k=1}^n \int_{\mathbb{R}^n}e_{i,k}(t,x,y)f_k^0(y) dy\right]dx^i=\sum_{i=1}^n\left[\int_{\mathbb{R}^n} e_{i,1}(t,x,y)f_1^0 (y)dy\right]dx^i.
$$
Thus, if we set 
$$ a_i=\int_{\mathbb{R}^n} e_{i,1}(t,x,y)f_1^0 (y)dy,$$ 
we have that $a_2=0, \dots, a_n=0$.  Since $f^0_1(x) \neq 0$ was arbitrary we get that  $e_{i,1}(t,x,y)=0$ for $i=2, \dots, n$. Similarly, one can prove $e_{i,2}(t,x,y)=0$ for $i=1, 3,\dots, n$ and for the remaining cases. Finally, from 
$$f_1(t,x)=\int_{\mathbb{R}^n} e_{1,1}(t,x,y)f^0_1(y) dy,$$ 
we have that  $ e_{1,1}(t,x,y)$ is equal to the heat kernel  on functions. The same holds for every $ e_{i,i}(t,x,y)$.

 Thus, for the case of 1-forms the heat kernel with respect to the standard basis on $\mathbb{R}^n$  it is a diagonal matrix where each diagonal element is the heat kernel on functions. Similarly, one can show that the heat kernel on $k$-forms on $\mathbb{R}^n$, can be described as a rectangular matrix of dimension $\binom{n}{k} $, with its entries being the the heat kernel on functions. So in Euclidean spaces, the heat kernel on forms is the ''same'' as in the heat kernel on functions.
\subsection{The Hyperbolic space $\mathbb{H}^n$}
There are four ways in which one can define the hyperbolic space $\mathbb{H}^n$, the Poincare half-space model, the Poincare ball model, The Beltrami-Klein model, and the Hyperboloid model. Actually, these four models are equivalent since they are isometric, as Riemannian manifolds (see Theorem 3.7 in \cite{lee2018introduction}).

We consider the Poincare half-space model of the hyperbolic space
\begin{equation*}
    \mathbb{H}^n = \{ w = (x, y) : x \in \mathbb{R}^{n-1}, y > 0 \}.
\end{equation*}
This is a complete Riemannian manifold for the metric
\begin{equation*}
    g_{\mathbb{H}^n} = \frac{dx_1^2 + dx_2^2 + \dots + dx_{n-1}^2 + dy^2}{y^2}
\end{equation*}
and the volume element is $$d\mu_{\mathbb{H}^n}(x) = \frac{dx_1 dx_2 \dots dx_{n-1} dy}{y^n}. $$
The Laplace-Beltrami operator is given by  
\begin{equation*}
    \Delta_{\mathbb{H}^n} = -y^2 \sum_{i=1}^{n-1} \frac{\partial^2}{\partial x_i \partial x_i} - y^2 \frac{\partial^2}{\partial y^2} + (2-n)y \frac{\partial}{\partial y}.
\end{equation*}
In geodesic polar coordinates $(\rho,\p)$, the above are expressed as follows:
$$
g_{\mathbb{H}^n}=d\rho^2+\sinh^2 \rho \;g_{S^{n-1}} (\phi),
$$
where $g_{S^{n-1}}$ denotes the Riemannian metric of the unit sphere $S^{n-1}$,
\begin{equation} \label{pol. coord.}
    d\mu_{\mathbb{H}^n} = \sinh^{n-1} \rho \, d\rho \,d\phi,
\end{equation}
and 
\begin{equation} \label{Laplacian Hyperbolic spaces}
    \Delta_{\mathbb{H}^n} = -\frac{\partial^2}{\partial \rho^2} - (n-1) \coth \rho \frac{\partial}{\partial \rho} + \frac{1}{\sinh^2 \rho} \Delta_{S^{n-1}}.
\end{equation}
For future reference, we set 
\begin{equation}\label{rad. Lapl.}
    \D_{\mathbb{H}^n}^{\textbf{rad}}  := -\frac{\partial^2}{\partial \rho^2} - (n-1) \coth \rho  \;\frac{\partial}{\partial \rho}
\end{equation}
for the radial part of Laplacian in hyperbolic space $\mathbb{H}^n$.
A direct computation shows that for $s \in \mathbb{C}$ the function $y^s$ satisfies
\begin{equation} \label{eignv eq.}
    \Delta_{\mathbb{H}^n} y^s= s(n-1-s)y^s.
\end{equation}
Also, we have that the spectrum of $ \Delta_{\mathbb{H}^n}$, $\sigma(\Delta_{\mathbb{H}^n})$, is 
$$
\sigma(\Delta_{\mathbb{H}^n})=\left[ \left( \frac{n-1}{2} \right)^2, \infty \right).
$$
For the Laplacian on $k$-forms, $\vec\Delta^{(k)}_{\mathbb{H}^n}$, we have that
\begin{equation} \label{eignv eq. f.}
    \vec\Delta^{(k)}_{\mathbb{H}^n} y^s=-s(s-n+2k+1)y^s.
\end{equation}
We also recall the spherical functions in $\mathbb{H}^n$ which are defined by (see Proposition 6.2 in \cite{Awonusika})
\begin{equation} \label{mathcalC}
\Phi_r^{\mathbb{H}^n}(\rho) := \mathcal{C}_{-\frac{n-1}{2}-ir}^{\frac{n-1}{2}}(\cosh \rho) := 2^{-\frac{2-n}{2}} \Gamma\left( \frac{n}{2} \right) \sinh^{\frac{2-n}{2}} \rho P_{-\frac{1}{2}+ir}^{\frac{2-n}{2}}(\cosh \rho).
\end{equation}
These functions are normalized such that $\Phi_r^{\mathbb{H}^n}(0) = 1$  and satisfy the radial eigenvalue problem for the Laplace-Beltrami operator, expressed as 
\begin{equation*}
\left[ \frac{\partial^2}{\partial \rho^2} + (n-1) \coth \rho \frac{\partial}{\partial \rho} \right] \Phi_r^{\mathbb{H}^n}(\rho) = s(s-n+1)\Phi_r^{\mathbb{H}^n}(\rho).
\end{equation*}
Here $P_\nu^\mu(z)$ (and $Q_\nu^\mu(z)$, which will be used later) are the associated Legendre functions, which are solutions of the associated Legendre equation 
\begin{equation}
\frac{\mathrm{d}}{\mathrm{d}z}\left[(1-z^2)\frac{\mathrm{d}p}{\mathrm{d}z}\right] + \left[\nu(\nu+1) - \frac{\mu^2}{1-z^2}\right]p = 0
\end{equation}
in the complex $z$-plane (see Chapter 14 in \cite{NIST:DLMF} and Chapter 3 in Volume I of \cite{Bateman.Vol1and2}). Furthermore, their asymptotic behavior as $\rho \to \infty$ is governed by the limit formula 
\begin{equation*}
\lim_{\rho \to \infty} e^{\left( \frac{n-1}{2} + ir \right) \rho} \Phi_r^{\mathbb{H}^n}(\rho) = \frac{2^{n-2} \Gamma\left( \frac{n}{2} \right) \Gamma(ir)}{\sqrt{\pi} \Gamma\left( \frac{n-1}{2} + ir \right)} = c(r),
\end{equation*}
where $c(r)$ denotes the Harish-Chandra $c$-function associated with $\mathbb{H}^n$ (see Theorem 6.1  in \cite{Awonusika}). Finally, we recall the definition of the Plancherel measure in $\mathbb{H}^n$ (see Definition 6.4 in \cite{Awonusika}), given by 
\begin{equation}\label{planc}
\sigma_{\mathbb{H}^n}(r) := |c(r)|^{-2} = [c(r)c(-r)]^{-1} = \frac{\pi \left| \Gamma\left( \frac{n-1}{2} + ir \right) \right|^2}{2^{2n-4} \Gamma\left( \frac{n}{2} \right)^2 |\Gamma(ir)|^2}.
\end{equation}
\section{Decomposition analysis of $\vec{\Delta}^{(1)}_{\mathbb{H}^n}$  on 1-forms }\label{section3}

In order to decompose the Laplacian on 1-forms $\vec{\Delta}^{(1)}_{\mathbb{H}^n}$  on the hyperbolic space $\mathbb{H}^n$, we use Donnelly's decomposition \cite{donnelly1981differential}(see also \cite{antoci2004spectrum}). Let $M= \mathbb R \times N$ be a product manifold, with the warped product metric 
$$
g=d\rho^2+f^2(\rho)g_{N},
$$
then the action of $\vec{\Delta}^{(k)}$ on $k$-forms on $M$ of the form $\omega=\omega_1+\omega_2 \wedge d\rho$, where $\omega_1$ and $\omega_2$ are respectively a $k$-form and a $(k-1)$-form on $N$ depending on $\rho$ is given in the next Proposition.

\begin{prop} \label{Delta expression} \cite{antoci2004spectrum}
Let $\omega_1=h_1(\rho)\e_1$, $\omega_2=h_2(\rho)\e_2$,  where $\e_1$ is a $k$-form on $N$ and $\e_2$ is a $(k-1)$-form on $N$, and $h_1,h_2$ are smooth functions of $\rho$. Then, the $k$-form $\omega=\omega_1+\omega_2 \wedge d\rho$ on $M$, satisfies
\begin{equation*}
    \begin{split}
        \vec\D^{(k)} \omega & =h_1 f^{-2}  \vec\D_N \e_1 +h_2f^{-2} ( \vec\D_N \e_2) \wedge d\rho +(-1)^k2 h_1 f' f^{-3}( \delta_N \e_1 )\wedge d\rho \\ 
        &+(-1)^k 2h_2 f' f^{-1} d_N \e_2 -[h_1''+(n-2k-1)h_1'f'f^{-1}] \e_1\\
        & - [h_2'' +(n-2k+1)(h_2f' f^{-1})']\e_2 \wedge d\rho.\\
    \end{split}
\end{equation*}
\end{prop}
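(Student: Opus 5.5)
We verify the formula by a direct computation of $\vec\Delta^{(k)}\omega = (d\delta+\delta d)\omega$ for $\omega = h_1\eta_1 + h_2\eta_2\wedge d\rho$ on $M=\mathbb R\times N$ with $g=d\rho^2+f^2 g_N$, $\dim M = n$. By linearity we treat the two pieces $h_1\eta_1$ and $h_2\eta_2\wedge d\rho$ separately.

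\textbf{Step 1: the exterior derivative.} This is metric independent:
\[
d(h_1\eta_1)=h_1\,d_N\eta_1+(-1)^k h_1'\,\eta_1\wedge d\rho, \qquad d(h_2\eta_2\wedge d\rho)=h_2\,d_N\eta_2\wedge d\rho .
\]

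\textbf{Step 2: the Hodge star of $M$.} Write $*_N$ for the star of $g_N$ on the $(n-1)$-dimensional fibre. For a $p$-form $\alpha$ on $N$, the orthonormal coframe $\{f e^i, d\rho\}$ gives
\[
*\alpha = \pm f^{\,n-1-2p}\,(*_N\alpha)\wedge d\rho, \qquad *(\alpha\wedge d\rho)=\pm f^{\,n-1-2p}\,*_N\alpha .
\]
The signs depend only on $p$ and $n$; we fix them once, for instance by testing on $\alpha = e^1\wedge\dots\wedge e^p$.

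\textbf{Step 3: the codifferential.} Using $\delta = (-1)^{n(k+1)+1}*d*$, we compute $\delta$ on each piece:
\[
\delta(h_1\eta_1)=h_1f^{-2}\delta_N\eta_1, \qquad
\delta(h_2\eta_2\wedge d\rho)=h_2f^{-2}(\delta_N\eta_2)\wedge d\rho \pm f^{-(n-2k+1)}\big(f^{\,n-2k+1}h_2\big)'\eta_2 .
\]
The $\rho$-derivative in the second formula arises when $d$ hits the powers $f^{\,n-1-2p}$. We apply the same recipe to the forms obtained in Step 1, whose degree is $k+1$. For example,
\[
\delta\big(h_1'\eta_1\wedge d\rho\big)=h_1'f^{-2}(\delta_N\eta_1)\wedge d\rho \pm f^{-(n-2k-1)}\big(f^{\,n-2k-1}h_1'\big)'\eta_1,
\]
and the last term expands to $h_1''+(n-2k-1)h_1'f'f^{-1}$.

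\textbf{Step 4: assembly.} We add $d\delta\omega+\delta d\omega$ and collect terms.
\begin{itemize}
\item The pure fibre terms combine into $h_1f^{-2}\vec\Delta_N\eta_1$ and $h_2f^{-2}(\vec\Delta_N\eta_2)\wedge d\rho$, since $d_N$ commutes with multiplication by functions of $\rho$.
\item The $\rho$-derivative terms $h_1''+(n-2k-1)h_1'f'f^{-1}$ and $h_2''+(n-2k+1)(h_2f'f^{-1})'$ appear. The second comes from differentiating $f^{-(n-2k+1)}(f^{\,n-2k+1}h_2)'$ in $\rho$.
\item The cross terms come from $d$ acting on $h_1f^{-2}\delta_N\eta_1$: this produces $(h_1f^{-2})'=h_1'f^{-2}-2h_1f'f^{-3}$. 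The piece $h_1'f^{-2}$ cancels against the matching term from $\delta d$, leaving $(-1)^k 2h_1f'f^{-3}(\delta_N\eta_1)\wedge d\rho$. The term $(-1)^k2h_2f'f^{-1}d_N\eta_2$ arises similarly, where the powers $f^{\pm 2}$ do not fully cancel.
\end{itemize}

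\textbf{Main obstacle.} The main difficulty is bookkeeping of signs and powers of $f$ in Step 2 and their propagation through $\delta=\pm*d*$. I would fix the conventions carefully on the simple test forms above. As a cross-check, setting $f\equiv1$ should reduce the formula to the product formula, and applying it to $k=0$ with $f=\sinh\rho$ should recover \eqref{Laplacian Hyperbolic spaces}. Alternatively, the formula may be cited directly from \cite{antoci2004spectrum}, and the computation above serves as verification.
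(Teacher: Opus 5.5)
Your proposal is correct. It differs from the paper only in that the paper gives no derivation: Proposition \ref{Delta expression} is simply quoted from \cite{antoci2004spectrum}, with Donnelly's decomposition \cite{donnelly1981differential} in the background. Your direct evaluation of $(d\delta+\delta d)\omega$ supplies that derivation, and your intermediate formulas check out:
\begin{itemize}
\item $d(h_1\eta_1)$ and $d(h_2\eta_2\wedge d\rho)$;
\item the scaling $f^{\,n-1-2p}$ in $*$;
\item the absence of a $\rho$-derivative in $\delta(h_1\eta_1)=h_1f^{-2}\delta_N\eta_1$;
\item the exponents $n-2k+1$ and $n-2k-1$ in $\delta(h_2\eta_2\wedge d\rho)$ and $\delta(h_1'\eta_1\wedge d\rho)$.
\end{itemize}
The cancellation of the $h_1'f^{-2}$ terms does occur and leaves $(-1)^k2h_1f'f^{-3}(\delta_N\eta_1)\wedge d\rho$. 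What your computation buys over the citation is that the conventions hidden in the formula become explicit: $\dim M=n$, $\delta$ is the formal adjoint of $d$, and the ordering $\eta_2\wedge d\rho$ is what produces the factor $(-1)^k$ in the two cross terms. It also lets one check directly the specializations used later, namely Corollary \ref{Corollary} and \eqref{Laplacian Hyperbolic spaces}.

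There are two points to tighten. First, you leave every sign in Step 3 as $\pm$, but the cancellations in Step 4 depend on them, so as written they are asserted rather than derived. You can avoid orientation bookkeeping altogether by computing $\delta$ as the formal adjoint of $d$ against $dV=f^{n-1}d\rho\,dV_N$. Use $|\alpha|_g=|\alpha\wedge d\rho|_g=f^{-p}|\alpha|_{g_N}$ for a $p$-form $\alpha$ on $N$, and one integration by parts in $\rho$ with test forms supported in $\rho>0$. For a $p$-form on $M$ this gives, with all signs fixed,
\[
\delta\big(a\,\alpha+b\,\beta\wedge d\rho\big)=af^{-2}\delta_N\alpha+(-1)^p f^{-(n+1-2p)}\big(f^{\,n+1-2p}b\big)'\beta+bf^{-2}(\delta_N\beta)\wedge d\rho .
\]

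Second, your explanation of the $d_N\eta_2$ cross term is inaccurate: it does not come from powers $f^{\pm 2}$ failing to cancel. The coefficient of $d_N\eta_2$ has two contributions:
\begin{itemize}
\item $(-1)^k f^{-(n-2k+1)}(f^{\,n-2k+1}h_2)'$ from $d\delta\omega$;
\item $(-1)^{k+1}f^{-(n-2k-1)}(f^{\,n-2k-1}h_2)'$ from $\delta\big(h_2\,d_N\eta_2\wedge d\rho\big)$.
\end{itemize}
The $h_2'$ parts cancel, and since the two exponents differ by $2$, what remains is $(-1)^k2h_2f'f^{-1}$.
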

In our study on the heat kernel on 1-forms in hyperbolic space $\mathbb{H}^n$, we focus on certain types of $1$-forms. Thus, we describe in detail the action of the Laplacian in these cases. 
\begin{Corollary} \label{Corollary 3.1} \cite{siasos2024p}
    \begin{itemize}
        \item [(a)] Let $\e$ be a co-closed $k$-eigenform over $N$ with $ \vec\D_N^{(k)} \e =\lambda \e$. Then, the $k$-form $\omega=h_1(\rho) \e$ over $M$ satisfies
        $$
         \vec\D^{(k)}(h_1(\rho) \e) = \D_1(h_1(\rho)) \e,
        $$ 
        where 
        $$
\Delta_1(h_1(\rho))=-\left[h_1''(\rho)+(n-2k-1)h_1'(\rho)\frac{f'(\rho)}{f(\rho)}\right]+\lambda \frac{h_1(\rho)}{f^2(\rho)}.
        $$
        \item [(b)] Let $\e$ be a closed $(k-1)$-eigenform over $N$ with $\vec\D_N^{(k-1)} \e =\lambda_0 \e$. Then, the $k$-form $\omega=h_2(\rho) \e \wedge d\rho$ over $M$ satisfies
        $$
         \vec\D^{(k)} (h_2(\rho) \;\e \wedge d\rho) = \D_2(h_2(\rho)) \; \e \wedge d\rho,
        $$
        where 
     \begin{equation*}
\Delta_2(h_2(\rho))=-\left[h_2''(\rho)+(n-2k+1)\left(h_2(\rho)\frac{f'(\rho)}{f(\rho)}\right)'\right]+\lambda_0 \frac{h_2(\rho)}{f^2(\rho)}.
\end{equation*}
    \end{itemize}
\end{Corollary}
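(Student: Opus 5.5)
The plan is to derive Corollary \ref{Corollary 3.1} directly from Proposition \ref{Delta expression}, by specializing its general formula and checking which terms vanish under the stated hypotheses.

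For part (a), I take $\omega_1=h_1\eta$ with $\eta$ a $k$-form on $N$, and $\omega_2=0$, i.e.\ $h_2\equiv 0$. Proposition \ref{Delta expression} then reduces to
\[
\vec\Delta^{(k)}\omega = h_1 f^{-2}\vec\Delta_N\eta + (-1)^k 2h_1 f' f^{-3}(\delta_N\eta)\wedge d\rho - \left[h_1''+(n-2k-1)h_1'f'f^{-1}\right]\eta .
\]
Since $\eta$ is co-closed, $\delta_N\eta=0$ and the mixed term disappears. Since $\vec\Delta_N\eta=\lambda\eta$, the first term equals $\lambda h_1 f^{-2}\eta$. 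Collecting the coefficient of $\eta$ gives exactly $\Delta_1(h_1)\eta$.

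For part (b), I take $h_1\equiv 0$ and $\omega_2=h_2\eta$, where $\eta$ is a $(k-1)$-form on $N$. The surviving terms are
\[
h_2f^{-2}(\vec\Delta_N\eta)\wedge d\rho + (-1)^k 2h_2f'f^{-1}d_N\eta - \left[h_2''+(n-2k+1)(h_2f'f^{-1})'\right]\eta\wedge d\rho .
\]
Closedness gives $d_N\eta=0$, which removes the tangential term. The eigen-equation $\vec\Delta_N\eta=\lambda_0\eta$ converts the first term into $\lambda_0h_2f^{-2}\eta\wedge d\rho$. What remains is $\Delta_2(h_2)\,\eta\wedge d\rho$.

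There is no real obstacle here; the argument is pure bookkeeping. The only points needing care are:
\begin{itemize}
\item confirming that the dimension $n$ appearing in the coefficients $(n-2k\mp1)$ is $\dim M$, as in Proposition \ref{Delta expression};
\item checking that the conventions for $\wedge d\rho$ ordering and signs agree with those used in the proposition.
\end{itemize}
Since we only read off the proposition, both are inherited automatically.
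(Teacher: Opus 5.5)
Your proposal is correct and is the intended argument. The paper gives no separate proof (it cites \cite{siasos2024p}) and presents the statement as a direct specialization of Proposition \ref{Delta expression}. It sets $h_2\equiv 0$ (respectively $h_1\equiv 0$) and kills the cross term with $\delta_N\eta=0$ (respectively $d_N\eta=0$) together with the eigen-equation, exactly as you do.
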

In the special case of the hyperbolic space $\mathbb{H}^{n}= [0, \infty) \times S^{n-1}$ with metric $
g=d\rho^2+\sinh^2 \rho g_{S^{n-1}}$, for 1-forms, the above Corollary takes the following form.
\begin{Corollary}\label{Corollary}
    \begin{itemize}
        \item [(a)] Let $\e$ be a co-closed $1$-eigenform over $S^{n-1}$ with $\vec{\Delta}^{(1)}_{S^{n-1}} \e =\lambda \e$. Then, the $1$-form $\omega=h_1(\rho) \e$ over $\mathbb{H}^{n}$ satisfies
        $$
         \vec{\Delta}^{(1)}_{\mathbb{H}^n}(h_1(\rho) \e) = \D_1(h_1(\rho)) \e,
        $$ 
        where 
        $$
\Delta_1(h_1(\rho))=-h_1''(\rho)-(n-3)\coth \rho \;h_1'(\rho)+\lambda \frac{h_1(\rho)}{\text{sinh}^2 \rho}.
        $$
    \item [(b)] Let $\e$ be a closed $0$-eigenform over $S^{n-1}$ with $\Delta_{S^{n-1}} \e =\lambda_0 \e$. Then, the $1$-form $\omega=h_2(\rho) \e \wedge d\rho$ over $\mathbb{H}^{n}$ satisfies
        $$
         \vec{\Delta}^{(1)}_{\mathbb{H}^n}(h_2(\rho) \;\e \wedge d\rho) = \D_2(h_2(\rho)) \; \e \wedge d\rho,
        $$
        where 
     \begin{equation*}
\Delta_2(h_2(\rho))=-h_2''(\rho)-(n-1)\coth \rho\; h_2'(\rho)+ (n-1+\lambda_0)\frac{ h_2(\rho)}{{\sinh }^2 \rho}.
\end{equation*}
    \end{itemize}
\end{Corollary}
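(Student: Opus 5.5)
The plan is to specialize Corollary \ref{Corollary 3.1} to $M=\mathbb{H}^n=[0,\infty)\times S^{n-1}$ with $f(\rho)=\sinh\rho$ and $k=1$, and to read off both formulas directly. Two facts drive everything. First, $f'/f=\coth\rho$ and $f^2=\sinh^2\rho$. Second, the dimension of the total space is $n$, while the fibre $N=S^{n-1}$ carries its own operators $\vec\Delta_{S^{n-1}}$, $d_{S^{n-1}}$ and $\delta_{S^{n-1}}$.

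For part (a), we take $k=1$ in Corollary \ref{Corollary 3.1}(a), with $\eta$ a co-closed eigenform of $\vec\Delta^{(1)}_{S^{n-1}}$ of eigenvalue $\lambda$. The coefficient $(n-2k-1)$ becomes $n-3$, and $f'/f=\coth\rho$. This gives
\[
\Delta_1 h_1=-h_1''-(n-3)\coth\rho\,h_1'+\lambda\frac{h_1}{\sinh^2\rho},
\]
which is exactly the claim.

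Part (b) needs slightly more care, because $\eta$ is now a $0$-form, i.e.\ a function on $S^{n-1}$. I would return to Proposition \ref{Delta expression} with $k=1$, $h_1=0$ and $\omega_2=h_2\eta$. Closedness in Corollary \ref{Corollary 3.1}(b) kills the term $(-1)^k2h_2f'f^{-1}d_N\eta$. If instead one only assumes that $\eta$ is an eigenfunction, this term is not present in the invariant subspace considered, so the hypothesis of Corollary \ref{Corollary 3.1}(b) is applied as stated. With $k=1$ the coefficient $(n-2k+1)$ equals $n-1$, so
\[
\Delta_2 h_2=-h_2''-(n-1)\bigl(h_2\coth\rho\bigr)'+\lambda_0\frac{h_2}{\sinh^2\rho}.
\]

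The only real computation is expanding the derivative term using $(\coth\rho)'=-1/\sinh^2\rho$:
\[
-(n-1)\bigl(h_2\coth\rho\bigr)'=-(n-1)\coth\rho\,h_2'+(n-1)\frac{h_2}{\sinh^2\rho}.
\]
Collecting this with the $\lambda_0 h_2/\sinh^2\rho$ term produces $(n-1+\lambda_0)h_2/\sinh^2\rho$, which matches the stated $\Delta_2$. The main point to watch is bookkeeping of the index conventions, namely that $n$ in Proposition \ref{Delta expression} is $\dim M$ and not $\dim N$. Once that is fixed, the result follows immediately from Corollary \ref{Corollary 3.1}.
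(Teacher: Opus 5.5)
Your proposal is correct and follows the paper's own route: the paper obtains this corollary by specializing Corollary~\ref{Corollary 3.1} to $f=\sinh\rho$, $k=1$ (so $f'/f=\coth\rho$ and the coefficients become $n-3$ and $n-1$), and your expansion $-(n-1)(h_2\coth\rho)'=-(n-1)\coth\rho\,h_2'+(n-1)h_2/\sinh^2\rho$ is exactly what produces the $(n-1+\lambda_0)$ coefficient. Drop the aside about weakening closedness, though: if $d_{S^{n-1}}\eta\neq 0$, the term $-2h_2\coth\rho\,d_{S^{n-1}}\eta$ from Proposition~\ref{Delta expression} genuinely survives; in fact closedness forces $\eta$ to be locally constant, so $\lambda_0=0$, which is the case used in Remark~\ref{eq1formszero}.
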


Now, we specify particular values in the above Corollary to define the differential operators used in what follows.
\begin{Remark}\label{eq1formszero} For the special case $\eta=1$ in $\omega=h_2(\rho) \e \wedge d\rho$ in Corollary \ref{Corollary} since $\la_0=0$ we have \begin{equation*}
\Delta_2(h(\rho))=-h''(\rho)-(n-1)\coth \rho \;h'(\rho)+ (n-1)\frac{ h(\rho)}{\sinh^2 \rho}. 
\end{equation*}
\end{Remark}

\begin{Remark} \label{eq2formszero}
For the special case $\la= 2(n-2)$ in (a), a straightforward computation gives
$$\Delta_{1}(h(\rho) \sinh \rho) = \tilde{\Delta}_{1} (h(\rho))$$ where $\tilde{\Delta}_{1}$ is a one-dim differential operator defined by 
\begin{equation*}
    \tilde{\Delta}_{1}f(\rho) := -f(\rho)'' - (n-1) \coth \rho \, f(\rho)' + \left[ -(n-2)+  \frac{ n-1}{\sinh^{2} \rho} \right] f(\rho).
\end{equation*}
\end{Remark}
 
Let $\theta^1, \dots, \theta^{n-1}$ be co-closed $1$-forms on $\mathbb{S}^{n-1}$ belonging to the eigenspace $E_{2(n-2)}(\Lambda^1 \mathbb{S}^{n-1})$ such that they form a local pointwise orthonormal coframe \cite{Gallot-Meyer}, \cite{Ikeda-Taniguchi}. Then, $\{\bar{\theta}^1, \dots, \bar{\theta}^{n-1}, d\rho\}$, where $\bar{\theta}^i = \sinh \rho \, \theta^i$, forms a local orthonormal frame of 1-forms on $\mathbb{H}^n$.

Now we define the subspace 
$$
W:= \text{span} \{\bar{\theta}^1, \dots, \bar{\theta}^{n-1}, d \rho\}
$$
of $ \Lambda^1 (\mathbb{H}^n)$ for which we will obtain the heat kernel of $\vec{\Delta}^{(1)}_{\mathbb{H}^n}$ in 1-forms and compute its action. For any  $\omega \in W$ let $h(\rho), h_1(\rho),\dots, h_{n-1}(\rho)$ be smooth functions such that 
\[
\omega = h_1(\rho) \bar{\theta}^1 + \dots + h_{n-1}(\rho) \bar{\theta}^{n-1} + h(\rho) d\rho.
\]
Setting
$$
A_i:=\{h(\rho) \bar{\theta}^i:  h(\rho) \in C^\infty[0,\infty) \}
$$
and 
$$
B:=\{h(\rho) d\rho: h(\rho) \in C^\infty[0,\infty) \},
$$
it follows by linearity of $\Delta_{\mathbb{H}^n}^{(1)}$ and Corollary \ref{Corollary} that
$$
W= A_1\oplus\cdots \oplus A_
{n-1}\oplus B
$$
and
\begin{equation}\label{dec}
\vec{\Delta}^{(1)}_{\mathbb{H}^n} \vert_{W}=\bigoplus_{i=1}^{n-1}\vec{\Delta}^{(1)}_{\mathbb{H}^n}\vert_{ A_i} \oplus\vec{\Delta}^{(1)}_{\mathbb{H}^n}\vert_B.
\end{equation}
For future reference, we note that by Remarks \ref{eq1formszero}, \ref{eq2formszero}, the component operators in (\ref{dec}) are given by
\begin{equation}\label{dec'}
  \vec{\Delta}^{(1)}_{\mathbb{H}^n}\vert_{ A_i} \left(h(\rho)  \bar\theta^i\right) = \tilde{\D}_1 \left(h(\rho)  \right)\theta^i \; \text{and} \;\vec{\Delta}^{(1)}_{\mathbb{H}^n}\vert_B \left(h(\rho)d\rho\right)=\D_2\left(h(\rho) \right)d\rho.
\end{equation}
In what follows, the above analysis is used to show that computing the heat kernel for the subspace W of 1-forms in the hyperbolic space $\mathbb{H}^n$  reduces to calculating the heat kernels of  $\vec{\Delta}^{(1)}_{\mathbb{H}^n}\vert_{ A_i}$ and $\vec{\Delta}^{(1)}_{\mathbb{H}^n}\vert_{ B}$.

\section{The heat kernel on functions on $\mathbb{H}^n$}

In \cite{Awonusika}, Awonusika recomputed the well-known heat kernel of the Laplacian on functions on $\mathbb{H}^n$ using its Green's function and the generalized Mehler-Fock transform (see Theorem 6.7 in \cite{Awonusika}). For  the generalized Mehler-Fock transform, see also \cite{Guseinov}. In this section, we state and prove these results, as they will serve as the scalar prototypes for their form-valued analogs in the next section.

\begin{theorem}[Theorem 6.7 in \cite{Awonusika}] \label{Thm 6.7 (0-forms)}
Let  $\Delta'_{\mathbb{H}^n}: = \Delta_{\mathbb{H}^n} + \frac{(n-1)^2}{4}$ be the shifted Laplacian in $\mathbb{H}^n$ and define the operator $h\left( \sqrt{\Delta'_{\mathbb{H}^n}} \right)$ via the functional calculus for a suitable function $h$  satisfying
\begin{equation*}
h\left( \sqrt{\Delta'_{\mathbb{H}^n}} \right) f(w) = \int_{\mathbb{H}^n} K_{\mathbb{H}^n}(w, w') f(w') \, d\mu_{\mathbb{H}^n}(w').
\end{equation*}
Then, its kernel is given by
\begin{equation}\label{6.54}
\begin{aligned}
 K_{\mathbb{H}^n}(w, w') =K_{\mathbb{H}^n}(\rho) &= \frac{2^{n-3} \Gamma \left( \frac{n}{2} \right)}{\pi^{\frac{n+2}{2}}} \int_0^\infty h(r) \sigma_{\mathbb{H}^n}(r) \mathcal{C}_{-\frac{n-1}{2}-ir}^{\frac{n-1}{2}}(\cosh \rho) \, dr,
\end{aligned}
\end{equation} where $\rho=d(w,w')$. Also, the kernel $K_{\mathbb{H}^n}(w, w')$ is the generalized Mehler-Fock integral formula for $\mathbb{H}^n$, satisfying the Millson-type recurrence formula
\begin{equation}\label{6.55}
K_{\mathbb{H}^{n+2}}(\rho) = -\frac{1}{2\pi \sinh \rho} \frac{\partial}{\partial \rho} K_{\mathbb{H}^n}(\rho).
\end{equation} Moreover, if we take $h(r)=e^{-r^2t}e^{-\frac{(n-1)^2}{4}t}$ in (\ref{6.54}) we get the heat kernel of $\D_{\mathbb{H}^n}$ (see (\ref{6.40}) and (\ref{6.41})).
\end{theorem}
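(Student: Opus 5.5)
The plan is to derive the kernel formula (\ref{6.54}) from the spectral (Mehler--Fock) decomposition of radial functions on $\mathbb{H}^n$. After that, the Millson recurrence (\ref{6.55}) should follow from a derivative identity for the spherical functions together with a relation between the Plancherel measures in dimensions $n$ and $n+2$.

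\textbf{Step 1: the kernel formula.} By isometry invariance, the kernel of $h(\sqrt{\Delta'_{\mathbb{H}^n}})$ is a radial function $K_{\mathbb{H}^n}(\rho)$ of $\rho=d(w,w')$. Write $\Delta'_{\mathbb{H}^n}$ in polar coordinates using (\ref{Laplacian Hyperbolic spaces}) and (\ref{rad. Lapl.}). On radial functions, the spherical functions $\Phi_r^{\mathbb{H}^n}$ of (\ref{mathcalC}) are eigenfunctions of $\Delta'_{\mathbb{H}^n}$ with eigenvalue $r^2$ (take $s=\frac{n-1}{2}+ir$ in the radial eigenvalue equation). The generalized Mehler--Fock inversion theorem states, for radial $f$,
\[
f(\rho)=\frac{2^{n-3}\Gamma(\frac n2)}{\pi^{\frac{n+2}{2}}}\int_0^\infty \hat f(r)\,\sigma_{\mathbb{H}^n}(r)\,\Phi^{\mathbb{H}^n}_r(\rho)\,dr,
\qquad
\hat f(r)=\int_0^\infty f(\rho)\Phi^{\mathbb{H}^n}_r(\rho)\sinh^{n-1}\rho\, d\rho .
\]
The delta function at the origin has transform $\hat\delta(r)=\Phi_r(0)/\mathrm{vol}(S^{n-1})=1/\mathrm{vol}(S^{n-1})$. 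Applying $h(\sqrt{\Delta'})$ multiplies the transform by $h(r)$. Hence the kernel has transform $h(r)$ up to the normalizing constant, and this gives (\ref{6.54}).

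The main obstacle in this step is checking the constant $2^{n-3}\Gamma(\frac n2)\pi^{-\frac{n+2}{2}}$. To do so, I would compare $|c(r)|^{-2}$ with $\mathrm{vol}(S^{n-1})=2\pi^{n/2}/\Gamma(\frac n2)$ and with the inversion constant of the Mehler--Fock transform for associated Legendre functions. An alternative check is to confirm the case $n=3$, where all quantities are elementary.

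\textbf{Step 2: the Millson recurrence.} I will use the Legendre identity
\[
\frac{d}{dz}\Big[(z^2-1)^{-\mu/2}P^\mu_\nu(z)\Big]=(z^2-1)^{-(\mu+1)/2}P^{\mu+1}_\nu(z),
\]
applied with $\mu=\frac{2-n}{2}$, i.e. $\mu-1=-\frac{n}{2}$ shifts the right way in the sign convention of (\ref{mathcalC}). Using $d/d\rho=\sinh\rho\, d/dz$ with $z=\cosh\rho$, this yields
\[
-\frac{1}{\sinh\rho}\frac{\partial}{\partial\rho}\Phi^{\mathbb{H}^n}_r(\rho)=c_n\Big(r^2+\tfrac{n^2}{4}\Big)\Phi^{\mathbb{H}^{n+2}}_r(\rho)\cdot\frac{1}{n}\quad(\text{some explicit }c_n).
\]
The constant is fixed by evaluating at $\rho=0$, since both spherical functions are normalized to $1$ there.

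Next, from (\ref{planc}) and $\Gamma(\frac{n+1}{2}+ir)=(\frac{n-1}{2}+ir)\Gamma(\frac{n-1}{2}+ir)$, the Plancherel measures satisfy
\[
\sigma_{\mathbb{H}^{n+2}}(r)=\sigma_{\mathbb{H}^n}(r)\,\frac{(\frac{(n-1)^2}{4}+r^2)}{16\,(\frac n2)^2}.
\]

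Finally, differentiate (\ref{6.54}) under the integral sign with the same $h$ in both dimensions. The polynomial factor produced by the derivative identity should cancel exactly against the ratio of Plancherel measures, and the leftover constants should combine to the factor $-\frac{1}{2\pi\sinh\rho}\partial_\rho$. I have not fixed $c_n$ or matched the polynomial factors by hand. This bookkeeping is where I expect the most care to be needed: the cancellation has to work out to give (\ref{6.55}) with no residual factors.

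\textbf{Step 3: the heat kernel case.} Take $h(r)=e^{-r^2t}e^{-\frac{(n-1)^2}{4}t}$, so that $h(\sqrt{\Delta'})=e^{-t\Delta_{\mathbb{H}^n}}$. I will compute the base cases directly:
\begin{itemize}
\item For $n=1$, the formula reduces to the Fourier cosine transform of a Gaussian, which gives $K_{\mathbb{H}^1}$.
\item For $n=2$, the Mehler--Fock transform together with the integral representation $P_{-\frac12+ir}(\cosh\rho)=\frac{\sqrt2}{\pi}\int_\rho^\infty\frac{\cos(ru)}{\sqrt{\cosh u-\cosh\rho}}\,du$ produces the stated $\mathbb{H}^2$ kernel.
\end{itemize}
Iterating (\ref{6.55}) from these base cases then gives (\ref{6.40}) and (\ref{6.41}). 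The exponential prefactors must be adjusted at each step, because the shift $\frac{(n-1)^2}{4}$ changes with $n$. This adjustment is exactly why the exponent $e^{-\frac{(n-1)^2}{4}t}$ (for odd $n$, from $\mathbb{H}^1$) and $e^{-\frac{n(n-2)}{4}t}$ (for even $n$, from $\mathbb{H}^2$, since $\frac{(n-1)^2}{4}-\frac14=\frac{n(n-2)}{4}$) appears in front of the base kernel.
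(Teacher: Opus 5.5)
Your overall architecture is sound, and Step 1 is a genuinely different route from the paper. The paper does not assume a Mehler--Fock inversion theorem. It derives the spectral density from the jump $R_{\mathbb{H}^n}(\sigma;\frac{n-1}{2}-ir)-R_{\mathbb{H}^n}(\sigma;\frac{n-1}{2}+ir)$ of its explicit Green function via Stone's formula, a template it later reuses for $1$-forms, where no ready-made transform exists. You instead quote the inversion theorem; that is shorter, but it makes the inversion constant the whole content of \eqref{6.54}.

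The step that genuinely fails is the $n=2$ base case in Step 3. The representation $P_{-\frac12+ir}(\cosh\rho)=\frac{\sqrt2}{\pi}\int_\rho^\infty\frac{\cos(ru)\,du}{\sqrt{\cosh u-\cosh\rho}}$ is false. Its right side diverges logarithmically as $\rho\to0^+$ (it equals $\frac{2}{\pi}\operatorname{Re}Q_{-\frac12+ir}(\cosh\rho)$), while $P_{-\frac12+ir}(1)=1$. Moreover, any cosine representation leaves the factor $\tanh(\pi r)$ from $\sigma_{\mathbb{H}^2}(r)=\pi r\tanh(\pi r)$ inside the $r$-integral, so the Gaussian integral does not close. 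The formula you need is $P_{-\frac12+ir}(\cosh\rho)=\frac{\sqrt2}{\pi}\coth(\pi r)\int_\rho^\infty\frac{\sin(ru)\,du}{\sqrt{\cosh u-\cosh\rho}}$. Its $\coth(\pi r)$ cancels the $\tanh(\pi r)$, and then $\int_0^\infty re^{-r^2t}\sin(ru)\,dr=\frac{\sqrt\pi\,u}{4t^{3/2}}e^{-u^2/4t}$ gives exactly the $\mathbb{H}^2$ kernel; this is what \eqref{6.71} encodes. Your $n=1$ case, and iterating \eqref{6.55} with the factor $e^{-(n-2)t}$, are fine and equivalent to the paper's use of the Strichartz identities.

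The bookkeeping you left open is also wrong as written.

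In Step 2, the identity you quote raises the order and applies to $(z^2-1)^{-\mu/2}P^\mu_\nu$. But $\Phi^{\mathbb{H}^n}_r\propto(z^2-1)^{\mu/2}P^\mu_\nu$ with $\mu=\frac{2-n}{2}$, so you need the lowering identity $\frac{d}{dz}[(z^2-1)^{\mu/2}P^\mu_\nu]=(\nu+\mu)(\nu-\mu+1)(z^2-1)^{(\mu-1)/2}P^{\mu-1}_\nu$, which is equivalent to the derivative formula the paper uses. For $\nu=-\frac12+ir$ it gives $-\frac{1}{\sinh\rho}\partial_\rho\Phi^{\mathbb{H}^n}_r=\frac1n\bigl(r^2+\frac{(n-1)^2}{4}\bigr)\Phi^{\mathbb{H}^{n+2}}_r$, which agrees with the value at $\rho=0$ forced by the radial ODE. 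So the factor is $r^2+\frac{(n-1)^2}{4}$, not $r^2+\frac{n^2}{4}$. With your factor no $r$-independent constant works. With the correct one, $\sigma_{\mathbb{H}^{n+2}}=\sigma_{\mathbb{H}^n}\frac{r^2+(n-1)^2/4}{4n^2}$ and $C_{n+2}=\frac{2n}{\pi}C_n$, where $C_n=\frac{2^{n-3}\Gamma(n/2)}{\pi^{(n+2)/2}}$, combine to give exactly \eqref{6.55}.

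In Step 1, with your normalization $\hat f(r)=\int_0^\infty f\Phi_r\sinh^{n-1}\rho\,d\rho$, the inversion constant is $\frac{2^{n-2}}{\pi}$. You can check this against the Fourier cosine transform for $n=1$ and the Mehler--Fock weight $r\tanh\pi r$ for $n=2$. The constant you wrote, $C_n=\frac{2^{n-2}}{\pi\,|S^{n-1}|}$, belongs to the transform over all of $\mathbb{H}^n$, for which $\hat\delta=1$. Pairing $C_n$ with $\hat\delta=1/|S^{n-1}|$, as you do, yields \eqref{6.54} divided by $|S^{n-1}|$.

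Finally, $\Phi_r$ having eigenvalue $r^2$ requires reading $\Delta'$ as $\Delta_{\mathbb{H}^n}-\frac{(n-1)^2}{4}$ with the paper's sign convention, which is what you implicitly do.
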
 
In the proof of Theorem \ref{Thm 6.7 (0-forms)}, we need the explicit formula for the Green function for $\Delta_{\mathbb{H}^n}$. This formula is well-known (see Theorem 6.6 in \cite{Awonusika}); nevertheless, its  complete proof is not widely accessible, so we provide it here for completeness. This proof will also serve as a guide for the proof of the Green function for  $\vec{\Delta}^{(1)}_{\mathbb{H}^n}\vert_{ A_i}$.

\begin{theorem}\label{Thm G. H^n 0-f.}
Let $n\geq 2$ be an integer. The Green function of $\Delta_{H^n}$ is equal to
\begin{equation*}
   R_{\mathbb{H}^n}( w, w';s) = c_n(s) \frac{1}{\sigma^s} F\left(s, s - \frac{n}{2}+1, 2s - n+2), \frac{1}{\sigma}\right),
\end{equation*}
where $\sigma := \cosh^2(d(w, w')/2)$ and 
\begin{equation*}
    c_n(s) = \frac{1}{2^{n} \pi^{n/2}} \frac{\Gamma(s) \Gamma\left(s - \frac{n}{2} + 1\right)}{\Gamma(2s - n+2))}.
\end{equation*}
\end{theorem}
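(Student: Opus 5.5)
The plan is to exploit the isometry invariance of $\Delta_{\mathbb{H}^n}$, so that the resolvent kernel $R_{\mathbb{H}^n}(w,w';s)$ of $\Delta_{\mathbb{H}^n}-s(n-1-s)$ depends only on $\rho=d(w,w')$. The Green function problem then reduces to a radial ODE with prescribed behaviour at $\rho=0$ and $\rho=\infty$. By (\ref{Laplacian Hyperbolic spaces}) and (\ref{eignv eq.}), a radial function $G(\rho)$ away from the pole must satisfy
\begin{equation*}
\D_{\mathbb{H}^n}^{\textbf{rad}}\, G = s(n-1-s)\, G, \qquad \rho>0 .
\end{equation*}

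First I change variables to $\sigma=\cosh^2(\rho/2)$. Then $\cosh\rho = 2\sigma-1$ and $\sinh^2\rho = 4\sigma(\sigma-1)$, and one computes
\begin{equation*}
\frac{d}{d\rho}=\sinh\rho\,\frac{1}{2}\frac{d}{d\sigma},
\end{equation*}
so that the equation becomes
\begin{equation*}
\sigma(1-\sigma)G_{\sigma\sigma}+\tfrac{n}{2}(1-2\sigma)G_\sigma + s(n-1-s)G=0 .
\end{equation*}
Substituting $G=\sigma^{-s}u(1/\sigma)$ and writing $z=1/\sigma$ should turn this into the Gauss hypergeometric equation
\begin{equation*}
z(1-z)u''+[c-(a+b+1)z]u'-ab\,u=0,
\end{equation*}
with $a=s$, $b=s-\tfrac n2+1$ and $c=2s-n+2$. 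This is the routine but error-prone bookkeeping step. Since $\sigma\sim e^{\rho}/4$, the solution $\sigma^{-s}F(a,b;c;1/\sigma)$ decays like $e^{-s\rho}$, so it is the solution that is $L^2$ at infinity for $\operatorname{Re}s>\frac{n-1}{2}$. The other solution grows like $e^{-(n-1-s)\rho}$, which rules it out and fixes $G$ up to a constant $c_n(s)$.

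Next I determine $c_n(s)$ from the singularity at $\rho=0$, i.e.\ $z\to1$. The Green function must satisfy $\Delta G-s(n-1-s)G=\delta$. Integrating over a small geodesic ball and using (\ref{pol. coord.}), this is equivalent to
\begin{equation*}
-\sinh^{n-1}\rho\, G'(\rho)\,|S^{n-1}|\to 1 \quad\text{as } \rho\to0 .
\end{equation*}
Here $c-a-b=-\tfrac n2+1$, so for $n\ge3$ the connection formula gives
\begin{equation*}
F(a,b;c;z)\sim \frac{\Gamma(c)\Gamma(a+b-c)}{\Gamma(a)\Gamma(b)}(1-z)^{c-a-b}.
\end{equation*}
For $n=2$ the behaviour is instead a logarithm, with leading coefficient $-\Gamma(c)/(\Gamma(a)\Gamma(b))$. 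Since $1-z\sim\rho^2/4$, we get $G\sim c_n(s)\,\frac{\Gamma(c)\Gamma(n/2-1)}{\Gamma(a)\Gamma(b)}\,4^{n/2-1}\rho^{2-n}$. Matching this with the Euclidean fundamental solution $\frac{\Gamma(n/2)}{2\pi^{n/2}(n-2)}\rho^{2-n}$, using $\Gamma(n/2)=(n/2-1)\Gamma(n/2-1)$, yields
\begin{equation*}
c_n(s)=\frac{\Gamma(a)\Gamma(b)}{2^n\pi^{n/2}\Gamma(c)},
\end{equation*}
as claimed. The case $n=2$ is checked separately with the logarithmic asymptotics and gives the same formula.

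Finally I justify that this candidate is indeed the resolvent kernel. It is smooth off the diagonal and has the correct local singularity, so $(\Delta-s(n-1-s))G=\delta$ distributionally. Its exponential decay makes it the kernel of a bounded operator on $L^2$ for $\operatorname{Re}s>\frac{n-1}{2}$, and by uniqueness it equals the resolvent; analytic continuation in $s$ extends the identity. I expect the main obstacle to be the normalization at $\rho=0$: getting the hypergeometric connection constants and the powers of $2$ right, and handling the $n=2$ logarithmic case.
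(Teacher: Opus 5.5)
Your proposal is correct. It has the same skeleton as the paper's proof: reduce to the radial ODE, keep the solution that is $L^2$ at infinity, and normalize by the $\rho^{2-n}$ singularity (the flux through a small sphere). The technical route, however, is different.

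The paper sets $u=\cosh\rho$ and solves the associated Legendre equation with $(u^2-1)^{\frac{2-n}{4}}Q^{1-\frac n2}_{s-\frac n2}(u)$. It normalizes using Bateman's asymptotics of $Q^\mu_\nu$ as $u\to1^+$, and only at the end converts to $\sigma^{-s}F(\dots;1/\sigma)$ via Bateman's $Q$--$F$ identity. You instead go directly to the Gauss equation in $z=1/\sigma$ and normalize with the connection formula at $z=1$.

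Your route is shorter and buys two concrete things.
\begin{itemize}
\item The paper's normalization uses $\Gamma(-\mu)=\Gamma(\frac n2-1)$, which requires $\operatorname{Re}\mu<0$, and it divides by $2-n$. So it really only covers $n\ge3$, whereas your logarithmic case handles $n=2$ and gives the same constant.
\item Your constant comes out positive, as in the statement. The paper's computation ends with a stray minus sign. A check at $n=3$, where the formula reduces to $e^{-(s-1)\rho}/(4\pi\sinh\rho)$, confirms the positive sign.
\end{itemize}
You also justify the identification of the candidate with the resolvent kernel, which the paper simply takes as known.

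Two small corrections are needed.
\begin{itemize}
\item The displayed $\sigma$-equation should read $\sigma(1-\sigma)G_{\sigma\sigma}+\frac n2(1-2\sigma)G_\sigma-s(n-1-s)G=0$. With your sign, the exponents at $\sigma=\infty$ are not $s$ and $n-1-s$. Your $a,b,c$ are nonetheless the right ones for the corrected equation.
\item For $\frac{n-1}{2}<\operatorname{Re}s\le n-1$ the kernel is not integrable against $\sinh^{n-1}\rho\,d\rho$. So $L^2$-boundedness does not follow from the exponential decay via Schur's test. Identify the candidate with the resolvent for real $s>n-1$ (or use Herz's criterion), then continue analytically in $s$, as you already indicate.
\end{itemize}
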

\begin{proof}
The proof follows the approach of Section 4.1 in \cite{Borthwickbook2ed}. Note, however, that Borthwick computed the Green function for the Hyperbolic plane $\mathbb{H}^2$ only, rather than for the general case $\mathbb{H}^n$. We thus present the general case; to our knowledge, this proof is new.

In general, the resolvent of a positive self-adjoint operator $A$ is the bounded operator   $(A-z)^{-1}$, with $z \in \mathbb{C} - [0,\infty)$ on $L^2(\mathbb{H}^n)$.  For  $\Delta_{\mathbb{H}^n}$, equation (\ref{eignv eq.}) suggests taking $z=s(n-1-s)$, so the resolvent is 
$$
R_{\mathbb{H}^n}(s) := (\Delta - s(n - 1 - s))^{-1} , 
$$
 for  $\text{Re } s > \frac{n-1}{2} $ with  $ s \notin [\frac{n-1}{2}, n-1]$.

It is a well known fact that the Green function of $\Delta_{\mathbb{H}^n}$ coincides  with the integral kernel $R_{\mathbb{H}^n}( w,w';s) $ of  $ R_{\mathbb{H}^n}(s)$. By the definition of $ R_{\mathbb{H}^n}(s)$, we have 

\begin{equation} \label{fund. sol. eq.}
(\Delta_{\mathbb{H}^n} - s(n - 1 - s))R_{\mathbb{H}^n}( w, w';s) = y^n \delta(w - w').
\end{equation}

By the homogeneity of the hyperbolic space $\mathbb{H}^n$, we can write $R_{\mathbb{H}^n}( w, w';s)= f_s(d(w,w'))$ for some function $f_s$. Considering geodesic polar coordinates $ (\rho, \p) $ centered on $w'$, we have $\rho=d(w,w')$, and taking into account the radial Laplacian $\D_{\mathbb{H}^n}^{\textbf{rad}}$,  the homogeneous equation corresponding to (\ref{fund. sol. eq.}) is

\begin{equation} \label{(4.8)}
f_s''(\rho) + (n-1) \coth(\rho) f_s'(\rho) + s(n-s-1) f_s(\rho) = 0,
\end{equation}
for $\rho>0$. 
We make the transformation $ h_s(u) =f_s(\rho)$ given by $u :=\cosh{\rho}$ on (\ref{(4.8)}). For simplicity, from now on we will write $h(u)$ for $h_s(u)$ and (\ref{(4.8)}) becomes

\begin{equation*}
    (1 - u^2) h''(u) - n u h'(u) - s(n - 1 - s) h(u) = 0,
\end{equation*}
which is the associated Legendre equation.  Since the solution to the above equation should be regular at infinity when $\text{Re} s> n-1$, the solution should be 

\begin{equation} \label{sol.}
    h(u)=c_n(s) (u^2 - 1)^{\frac{2 - n}{4}} Q_{s - \frac{n}{2}}^{1 - \frac{n}{2}}(u)
\end{equation}
for some coefficient $c_n(s)$.

To compute the coefficient $c_n(s)$,we use the boundary condition as $u \to 1^+$.
First, we apply the asymptotic behavior of $ Q_\nu^\mu(u)$ as $u \to 1^+$ given by Eq. (11) on p. 163 in the first volume of \cite{Bateman.Vol1and2}:
\begin{equation}\label{Batem.(11)}
    Q_\nu^\mu(u) \sim 2^{-\frac{\mu}{2}-1} \frac{\Gamma(-\mu) \Gamma(\nu+\mu+1)}{\Gamma(\nu-\mu+1)} (1-u)^{\mu/2}, \quad \text{Re } \mu < 0.
\end{equation}
We have

$$Q_{-\frac{n}{2} + s}^{1 - \frac{n}{2}}(u) \sim \frac{2^{\frac{n-2}{4}}}{2} \cdot \Gamma\left(\frac{n}{2}-1\right) \frac{\Gamma(\nu+\mu+1)}{\Gamma(s)} \cdot \frac{1}{(1-u)^{\frac{n-2}{4}}}, \quad u \to 1^+,  $$
which yields

$$
 (u^2 - 1)^{\frac{2-n}{4}} Q_{-\frac{n}{2} + s}^{1 - \frac{n}{2}}(u) \sim -\frac{1}{2} \frac{\Gamma\left(\frac{n}{2}-1\right) \Gamma(\nu+\mu+1)}{\Gamma(s)} \cdot \frac{1}{(u-1)^{\frac{n-2}{2}}}, \quad u \to 1^+ .
$$
In terms of the initial variable $\rho$, the asymptotic behavior described above takes the following form
\begin{equation*}
   (\sinh^2 \rho)^{\frac{2-n}{4}} Q_{-\frac{n}{2} + s}^{1 - \frac{n}{2}}(\cosh \rho) \sim \frac{1}{2 \cdot 2^{\frac{2-n}{2}}} \frac{\Gamma(\frac{n}{2}-1) \Gamma(\nu+\mu+1)}{\Gamma(s)} \cdot \frac{1}{\rho^{n-2}}, \quad \rho \to 0^+.
\end{equation*}
Thus, the solution 
\begin{equation*}
   f_s(\rho) = c_n(s) \cdot (\sinh^2 \rho)^{\frac{2-n}{4}} Q_{-\frac{n}{2} + s}^{1 - \frac{n}{2}}(\cosh \rho) 
\end{equation*}
of (\ref{fund. sol. eq.})  has the following asymptotic behavior
\begin{equation} \label{bdr a. b. Q}
    f_s(\rho) \sim c_n(s) \frac{1}{2 \cdot 2^{-\frac{n-2}{2}}} \cdot \frac{\Gamma(\frac{n}{2}-1) \Gamma(\nu+\mu+1)}{\Gamma(s)} \cdot \frac{1}{\rho^{n-2}}, \quad \rho \to 0^+.
\end{equation}

 Next, we integrate (\ref{fund. sol. eq.}) over the ball $B(0, \ep)$ to obtain

 \begin{equation*}
      \int_0^\epsilon \int_{S^{n-1}} \left( \Delta_{\mathbb{H}^n} - s(n-s-1) \right) f_s(\rho) \sinh^{n-1} \rho \, d\rho d\theta = \int_0^\epsilon \int_0^{2\pi} \frac{\delta(\rho)}{\sinh^{n-1} \rho} \sinh^{n-1} \rho \, d\rho d\theta.
 \end{equation*}
After a straightforward calculation, we get

\begin{equation*}
    \int_0^\epsilon \int_{S^{n-1}} \left[ -\frac{1}{\sinh^{n-1} \rho} \left( \sinh^{n-1} \rho \, f_s'(\rho) \right)' - s(n-s-1) f_s(\rho) \right] \sinh^{n-1} \rho \, d\rho d\theta = 1
\end{equation*}
and finally
$$
-\omega_{n-1} \int_{0}^{\epsilon} (\sinh^{n-1} \rho f_s'(\rho))' d\rho - \omega_{n-1} s(n-s-1) \int_{0}^{\epsilon} f_s(\rho) \sinh^{n-1} \rho d\rho = 1,
$$
where $\omega_{n-1}$ is the surface area of the unit sphere $S^{n-1}$.

By imposing  $f_s \in L^2_{\text{loc}} $, the second term in the last equation vanishes as $\epsilon \rightarrow 0$, and the first term yields

\begin{equation*}
    \lim_{\epsilon \rightarrow 0} -\sinh^{n-1} {\ep} f_s'(\ep)= \frac{1}{\omega_{n-1}} .
\end{equation*}
After a straightforward calculation and using the identity  $\omega_{n-1} = \frac{\Gamma(\frac{n}{2})}{2\pi^{n/2}}$, we get
\begin{equation} \label{bdr a. b. Ball}
    f_{s}(\epsilon) \sim -\frac{\Gamma(\frac{n}{2})}{2\pi^{n/2}(2-n)} \cdot \frac{1}{\epsilon^{n-2}}, \quad \epsilon \to 0.
\end{equation}

Finally, combining (\ref{bdr a. b. Q}) and (\ref{bdr a. b. Ball}) with the identity  $\Gamma (\frac{n}{2})=\frac{n-2}{2}\Gamma (\frac{n-2}{2}) $, we find

\begin{equation}\label{c_n(s)}
    c_n(s) = -\frac{2^{\frac{2-n}{2}}}{2\pi^{n/2}} \cdot \frac{\Gamma(s)}{\Gamma(\nu+\mu+1)}.
\end{equation} 
Next, we use the expression for $Q_{\nu}^{\mu}(z)$ in terms of the hypergeometric equation (see Eq. (10) on p. 123, Eq (36) on p. 132 and Eq. (36) on p. 133 in the first volume of  \cite{Bateman.Vol1and2}):
\begin{equation} \label{Bateman10}
\begin{split}
e^{-i \mu \pi} Q_{\nu}^{\mu}(x) = & \ 2^{\nu} \frac{\Gamma(1+\nu) \Gamma(1+\nu+\mu)}{\Gamma(2+2\nu)} (x+1)^{\frac{\mu}{2} - \nu - 1} (x-1)^{-\frac{\mu}{2}} \\
& \cdot F\left(1+\nu-\mu, 1+\nu; 2+2\nu; \frac{2}{1+x}\right)
\end{split}
\end{equation}
and find 
\begin{equation*}
\begin{split}
Q_{s - \frac{n}{2}}^{1 - \frac{n}{2}}(u) = & \ 2^{s - \frac{n}{2}} \frac{\Gamma(s - \frac{n}{2} + 1) \Gamma(\nu + \mu + 1)}{\Gamma(2s - n + 2)} (u+1)^{\frac{n}{4} - \frac{1}{2} - s} (u-1)^{\frac{n-2}{4}} \\
& \cdot F\left(s, s - \frac{n}{2} + 1; 2s - n + 2; \frac{2}{1+u}\right)
\end{split}.
\end{equation*}
Substituting the above equation and (\ref{c_n(s)})  into (\ref{sol.}),  we get
\begin{equation*}
\begin{split}
h_s(u) = & -\frac{ 2^{\frac{2-n}{2}} \cdot 2^{s - \frac{n}{2}}}{2\pi^{n/2}} \cdot \frac{\Gamma(s) \Gamma(s - \frac{n}{2} + 1)}{\Gamma(2s - n + 2)} (u+1)^{\frac{n}{4} - \frac{1}{2} - s} (u-1)^{\frac{n-2}{4}} \\[10pt]
& \cdot F\left(s, s - \frac{n}{2} + 1; 2s - n + 2; \frac{2}{1+u}\right)
\end{split}.
\end{equation*}
By a straightforward calculation and changing the variable $\sigma:= \cosh^2\left(\frac{r}{2}\right)$ we obtain
\begin{equation*}
f_{s}(\sigma) = -\frac{1}{2^n\pi^{n/2}} \cdot \frac{\Gamma(s) \Gamma(s - \frac{n}{2} + 1)}{\Gamma(2s - n + 2)} \left( \frac{1}{\sigma} \right)^{s} \cdot F\left(s, s - \frac{n}{2} + 1; 2s - n + 2; \frac{1}{\sigma}\right).
\end{equation*}

\end{proof}

\textit{Proof of Theorem \ref{Thm 6.7 (0-forms)}:} 

Using Theorem \ref{Thm G. H^n 0-f.} for $s=\frac{n-1}{2}-ir$ and $s=\frac{n-1}{2}+ir$, we have

\begin{equation}\label{Green.differ.}
\begin{split}
R_{\mathbb{H}^n}\left(\sigma; \frac{n-1}{2} - ir\right) - R_{\mathbb{H}^n}\left(\sigma; \frac{n-1}{2} + ir\right) =    \\ \frac{1}{ \pi^{n/2} 2^{n} } \left[ \left(\frac{1}{\sigma}\right)^{\frac{n-1}{2} - ir} \cdot 
\frac{\Gamma (\frac{n-1}{2} - ir)\Gamma(\frac{1}{2}-ir) }{\Gamma(1-2ir)} \cdot F\left(\frac{n-1}{2} - ir,\frac{1}{2}-ir; 1-2ir; \frac{1}{\sigma}\right) \right.\\ -\left.
\left(\frac{1}{\sigma}\right)^{\frac{n-1}{2} + ir} \cdot 
\frac{\Gamma (\frac{n-1}{2} + ir)\Gamma(\frac{1}{2}+ir) }{\Gamma(1+2ir)} \cdot F\left(\frac{n-1}{2} + ir, \frac{1}{2}+ir; 1+2ir; \frac{1}{\sigma}\right)   \right]
\end{split}.
\end{equation}
Awonusika found the following expression for the above difference 

\begin{equation}\label{4.15}
    \begin{aligned}
        R_{\mathbb{H}^n}\left(\sigma; \frac{n-1}{2} - ir\right) - R_{\mathbb{H}^n}\left(\sigma; \frac{n-1}{2} + ir\right) = &\frac{\Gamma\left(\frac{n-1}{2} - ir\right) \Gamma\left(\frac{n-1}{2} + ir\right) \Gamma\left(\frac{1}{2} + ir\right) \Gamma\left(\frac{1}{2} - ir\right)}{2^n \pi^{\frac{n}{2}} (-2ir) \Gamma(-2ir) \Gamma(2ir) \Gamma\left(\frac{n}{2}\right)} \cdot\\
        & F\left(\frac{n-1}{2} + ir, \frac{n-1}{2} - ir; \frac{n}{2}; 1 - \sigma\right)
    \end{aligned} 
\end{equation}
By the homogeneity of the hyperbolic space and by considering geodesic polar coordinates $ (\rho, \p)$ centered at $w'$ with $\rho=d(w,w')$, the above equation is equal to

\begin{equation}\label{6.56}
\begin{aligned}
        R_{\mathbb{H}^n}\left( \cosh^2\left(\frac{\rho}{2}\right); \frac{n-1}{2} - ir \right) - R_{\mathbb{H}^n}\left( \cosh^2\left(\frac{\rho}{2}\right); \frac{n-1}{2} + ir \right)
&= \frac{i}{\pi^{\frac{n}{2}}r} 2^{n-3} \Gamma\left(\frac{n}{2}\right) \cdot\\ &\sigma_{\mathbb{H}^n}(r) \mathcal{C}^{\frac{n-1}{2}}_{-\frac{n-1}{2}-ir}(\cosh \rho)
\end{aligned},
\end{equation}
where $\mathcal{C}^{\frac{n-1}{2}}_{-\frac{n-1}{2}-ir}(\cosh \rho) $ is defined in (\ref{mathcalC}).

Now, let $E_\la, \la \in \mathbb{R}$, be the spectral projection for the self-adjoint Laplacian $\Delta'_{\mathbb{H}^n} = \Delta_{\mathbb{H}^n} + \frac{(n-1)^2}{4}$: 

\begin{equation*}
\Delta_{\mathbb{H}^n}' f = \int_0^\infty \la \, dE_\la f, \quad f \in L^2(\mathbb{H}^n).
\end{equation*}
Using Stone's formula (see Eq. (A.15) in \cite{Awonusika}), we have

\begin{equation*}
dE_\la f(w) = \frac{1}{2\pi i} [R_{\la+i0} - R_{\la-i0}] f(w) \, d\la,
\end{equation*}
where $R_\la$ is the resolvent operator in $\mathbb{H}^n$ defined by

\begin{equation*}
(R_\la f)(w) = \int_{\mathbb{H}^n} R_{\mathbb{H}^n}'(w, w'; \la) f(w') \, d\mu_{\mathbb{H}^n}(w'), 
\end{equation*}
with

\begin{equation*}
R_{\mathbb{H}^n}'(w, w'; \la) = 
\begin{cases} 
R_{\mathbb{H}^n}(\sigma; \frac{n-1}{2} + i\sqrt{\la}), & \text{Im } \la < 0, \\
R_{\mathbb{H}^n}(\sigma; \frac{n-1}{2} - i\sqrt{\la}), & \text{Im } \la > 0.
\end{cases} 
\end{equation*}
By the above, we have

\begin{equation*}
(E_\la f)(w) =  \int_{\mathbb{H}^n} \frac{1}{2\pi i}  \int_0^\la \left[ R_{\mathbb{H}^n}'(w, w'; r + i0) - R_{\mathbb{H}^n}'(w, w'; r - i0) \right]  dr\;f(w') \, d\mu_{\mathbb{H}^n}(w').
\end{equation*}
Let us denote its kernel by 
$$
\pi_{\la}(w, w') = \frac{1}{2\pi i} \int_{0}^{\la} \left[ R_{\mathbb{H}^n}'(w, w'; r + i0) - R_{\mathbb{H}^n}'(w, w'; r - i0) \right] dr, $$
then using  (\ref{6.56}),  we compute
\begin{equation}\label{pirho}
\begin{aligned} 
\pi_{\la}(w, w')=\pi_{\la}(\rho) &= \frac{1}{2\pi i} \int_{0}^{\la} \left[ R_{\mathbb{H}^n}\left(\sigma; \frac{n-1}{2} - i\sqrt{r}\right) - R_{\mathbb{H}^n}\left(\sigma; \frac{n-1}{2} + i\sqrt{r}\right) \right] dr \\ 
&= \frac{1}{\pi i} \int_{0}^{\sqrt{\la}} r \left[ R_{\mathbb{H}^n}\left(\sigma; \frac{n-1}{2} - ir\right) - R_{\mathbb{H}^n}\left(\sigma; \frac{n-1}{2} + ir\right) \right] dr \\
&= \frac{2^{n-3} \Gamma\left(\frac{n}{2}\right)}{\pi^{\frac{n}{2}+1}} \int_{0}^{\sqrt{\la}} \sigma_{\mathbb{H}^n}(r) \mathcal{C}_{-\frac{n-1}{2}-ir}^{\frac{n-1}{2}}(\cosh \rho) \, dr.
\end{aligned} 
\end{equation}

Now, due to the invariance of the $\Delta_{\mathbb{H}^n}$ under the isometry group, the kernel $K_{\mathbb{H}^n}(w,w')$ in 
\begin{equation} \label{6.63}
h\left( \sqrt{\Delta'_{\mathbb{H}^n}} \right) f(w) = \int_{\mathbb{H}^n} K_{\mathbb{H}^n}(w, w') f(w') \, d\mu_{\mathbb{H}^n}(w')
\end{equation}
depends only on $\rho$, $K_{\mathbb{H}^n}(w,w')=K_{\mathbb{H}^n}(\rho)$.  On the other hand, by the functional calculus, we have
\begin{equation}
\begin{aligned} \label{6.64}
h\left(\sqrt{\Delta_{\mathbb{H}^n}'}\right) f(w) 
&= \int_0^\infty h(\sqrt{\la}) \, dE_\la f(w) \\
&= \int_0^\infty h(\sqrt{\la}) \left[  \frac{1}{2\pi i}\int_0^\infty R_{\mathbb{H}^n}\left(\sigma; \frac{n-1}{2} - ir\right)  \right. \\
&\quad \left. - R_{\mathbb{H}^n}\left(\sigma; \frac{n-1}{2} + ir\right) f(\rho') \, d\rho'\right] d\la\\
&= \int_0^\infty  \left[\frac{1}{2\pi i} \int_0^\infty  h(\sqrt{\la})  R_{\mathbb{H}^n}\left(\sigma; \frac{n-1}{2} - ir\right)  \right. \\
&\quad \left. - R_{\mathbb{H}^n}\left(\sigma; \frac{n-1}{2} + ir\right) d\la \right] f(\rho') \, d\rho' 
\end{aligned}.
\end{equation}
By comparing (\ref{6.64})  with (\ref{6.63})  and using  (\ref{6.56}), we find 
\begin{equation}
\begin{aligned}
K_{\mathbb{H}^n}(\rho) &=  \frac{1}{2\pi i} \int_0^\infty h(\sqrt{\la}) \left( R_{\mathbb{H}^n}\left(\sigma; \frac{n-1}{2} - ir\right) - R_{\mathbb{H}^n}\left(\sigma; \frac{n-1}{2} + ir\right) \right) d\la \\
&= \frac{1}{\pi i} \int_0^\infty r h(r) \left( R_{\mathbb{H}^n}\left(\sigma; \frac{n-1}{2} - ir\right) - R_{\mathbb{H}^n}\left(\sigma; \frac{n-1}{2} + ir\right) \right) dr \\
&= \frac{2^{n-3} \Gamma\left(\frac{n}{2}\right)}{\pi^{\frac{n+2}{2}}} \int_0^\infty h(r) \sigma_{\mathbb{H}^n}(r) \mathcal{C}_{-\frac{n-1}{2}-ir}^{\frac{n-1}{2}}(\cosh \rho) \, dr.
\end{aligned}
\end{equation}
This proves (\ref{6.54}).

For (\ref{6.55}), Awonusika shows that 
\begin{equation*}
    \mathcal{D}_n(r) := 2^{2n-4} \pi^{-1} \Gamma\left(\frac{n}{2}\right)^2 \sigma_{\mathbb{H}^n}(r).
\end{equation*}
satisfies
\begin{equation} \label{6.59}
    \mathcal{D}_{n+2}(r) = \left[ \left(\frac{n-1}{2}\right)^2 + r^2 \right] \mathcal{D}_n(r), \; \text{for} \; n \ge 1.
\end{equation} 
Then, by differentiating both sides of (\ref{6.54}) with respect to $\rho$, 
using the formula 
\begin{equation*} 
    \left. \frac{d P_{-\frac{1}{2}+i r}^{\frac{2-n}{2}}(x)}{d x} \right|_{x=\cosh \rho} = -\frac{\left(\frac{n-1}{2}\right)^2 + r^2}{\sinh \rho} P_{-\frac{1}{2}+i r}^{-\frac{n}{2}}(\cosh \rho) + \left(\frac{n-2}{2}\right) \frac{\cosh \rho}{\sinh^2 \rho} P_{-\frac{1}{2}+i r}^{\frac{2-n}{2}}(\cosh \rho)
\end{equation*}
and  (\ref{6.59}), we get (\ref{6.55}).

Now we compute the heat kernel. We take   $h_t(r) = e^{-r^2 t} e^{-\frac{(n-1)^2}{4} t}, \quad t > 0$, in (\ref{6.54}) and, using (\ref{mathcalC}) and (\ref{planc}), we obtain

\begin{equation}\label{6.68}
K_{\mathbb{H}^n}(t,w,w')
=
\frac{e^{-\frac{(n-1)^2}{4}t}}{(2\pi)^{\frac{n}{2}}}
\sinh^{\frac{2-n}{2}}\rho
\int_0^\infty e^{-r^2 t}\mathcal{D}_n(r)
P_{-\frac12+ir}^{\frac{2-n}{2}}(\cosh\rho)\,dr.
\end{equation}
The Millson formula (\ref{6.55}) for the heat kernel now takes the form
\begin{equation} \label{6.69}
    K_{\mathbb{H}^n}(t, w, w') = -\frac{e^{-(n-2)t}}{2\pi \sinh \rho} \frac{\partial}{\partial \rho} K_{\mathbb{H}^{n-2}}(t, w, w'), \quad n \ge 3.
\end{equation}

For the computation of the heat kernel we distinguish two cases:

\textbf{Case (a):} $n$ is odd, $n\geq 1$. We substitute the formula (see \cite{Strichartz1}) 
\begin{equation}\label{strich id.}
\sinh^{\frac{2-n}{2}}\rho \,\mathcal{D}_n(r)
P_{-\frac12+ir}^{\frac{2-n}{2}}(\cosh\rho)
=
\frac{(2\pi)^{\frac{n}{2}}}{\pi}
\left(-\frac{1}{2\pi\sinh\rho}\frac{\partial}{\partial\rho}\right)^{\frac{n-1}{2}}
\cos(\rho r)
\end{equation}
into (\ref{6.68})  and get 
\begin{equation*}
K_{\mathbb{H}^{n}}(t, w, w') = \frac{e^{-\frac{(n-1)^{2}}{4} t}}{2^{\frac{n-1}{2}} \pi^{\frac{n+1}{2}}} \left( -\frac{1}{\sinh \rho} \frac{\partial}{\partial \rho} \right)^{\frac{n-1}{2}} \int_{0}^{\infty} e^{-r^{2} t} \cos \rho r \, d r.
\end{equation*}
Upon applying the identity
\begin{equation*}
\int_{0}^{\infty} e^{-\beta^{2} \eta^{2}} \cos \alpha \eta \, d \eta = \frac{\sqrt{\pi}}{2 \beta} e^{-\frac{\alpha^{2}}{4 \beta^{2}}}, \quad \alpha > 0,
\end{equation*}
we get
\begin{equation*}
K_{\mathbb{H}^{n}}(t, w, w') = \frac{e^{-\frac{(n-1)^{2}}{4} t}}{\sqrt{2}(2 \pi)^{\frac{n}{2}} \sqrt{t}} \left( -\frac{1}{\sinh \rho} \frac{\partial}{\partial \rho} \right)^{\frac{n-1}{2}} e^{-\frac{\rho^{2}}{4 t}}, 
\end{equation*}
which agrees with (\ref{6.40}).

\textbf{Case (b):} $n$ is even, $n\geq 2$. We use the identity (see \cite{Strichartz2} )
    \begin{equation}\label{strich id 2}
        \sinh^{\frac{2-n}{2}} \rho \mathcal{D}_n(r) P^{\frac{2-n}{2}}_{-\frac{1}{2}+ir}(\cosh \rho) = r \tanh \pi r \frac{(2\pi)^{\frac{n}{2}}}{2\pi} \left(-\frac{1}{2\pi \sinh \rho} \frac{\partial}{\partial \rho}\right)^{\frac{n-2}{2}} P_{-\frac{1}{2}+ir}(\cosh \rho).
    \end{equation}
By replacing the Legendre function with its integral representation (see (B.86) in \cite{Awonusika}), we have
    \begin{equation} \label{6.71}
        \sinh^{\frac{2-n}{2}} \rho \mathcal{D}_n(r) P^{\frac{2-n}{2}}_{-\frac{1}{2}+ir}(\cosh \rho) = \frac{(2\pi)^{\frac{n}{2}}\sqrt{2}}{2\pi^2} \left(-\frac{1}{2\pi \sinh \rho} \frac{\partial}{\partial \rho}\right)^{\frac{n-2}{2}} \int_{\rho}^{\infty} \frac{r \sin r u \, du}{\sqrt{\cosh u - \cosh \rho}}.
    \end{equation}
Now, using (\ref{6.71}) and the identity
    $$
    \int_{0}^{\infty} \beta e^{-b^{2}\beta^{2}} \sin a\beta \, d\beta = \frac{a\sqrt{\pi}}{4b^{3}} e^{-\frac{a^{2}}{4b^{2}}}
    $$
    
in (\ref{6.68}), we obtain the following
    \begin{equation}
        K_{H^n}(t, w, w') = \frac{e^{-\frac{n(n-2)}{4}t} e^{-\frac{t}{4}}\sqrt{2}}{8\pi^{\frac{3}{2}} t^{\frac{3}{2}}} \left(-\frac{1}{2\pi \sinh \rho} \frac{\partial}{\partial \rho}\right)^{\frac{n-2}{2}} \int_{\rho}^{\infty} \frac{u e^{-\frac{u^2}{4t}} \, du}{\sqrt{\cosh u - \cosh \rho}},
    \end{equation}
which agrees with (\ref{6.41}). This completes the proof of Theorem \ref{Thm 6.7 (0-forms)}.

\section{The heat kernel on 1-forms on $\mathbb{H}^n$}
In this section, we first show how the decomposition (\ref{dec}) can be used to determine the heat kernel on 1-forms on the hyperbolic space $\mathbb{H}^n$ for the subspace W. Next, we state and prove the analogs of Theorem \ref{Thm G. H^n 0-f.} and Theorem \ref{Thm 6.7 (0-forms)}, which give us the heat kernel. 

\subsection{Reducing the vector case to the scalar case}
Let $\{\bar{\theta}^1, \dots, \bar{\theta}^{n-1}, d \rho\}$ be the orthonormal coframe of $W$ as defined in Section \ref{section3}. By (\ref{eq1heatdec}) we find that the heat kernel on 1-forms on $\mathbb{H}^n$ 
for $W$ has the following expression
\begin{equation*}
\begin{aligned}
    \vec{p} \vert_{W}(t,x,y)= &\sum_{i,k=1}^{n-1} e_{i,k} (t,x,y)\bar{\theta}^i_x\otimes \bar{\theta}^k_y+ \sum_{i=1}^{n-1}e_{i,n}(t,x,y) d\rho_x\otimes\bar{\theta}^i_y\\
    &+\sum_{i=1}^{n-1}e_{n,i}(t,x,y)\bar{\theta}^i_x \otimes d\rho_y +e_{n,n}(t,x,y) d\rho_x \otimes d\rho_y
\end{aligned}.
\end{equation*}
\begin{prop}
With respect to the orthonormal coframe $\{\bar{\theta}^1, \dots, \bar{\theta}^{n-1}, d\rho\}$, all off-diagonal components of the heat kernel vanish, $e_{i,j}=0$ for all $i\neq j$ . The non-zero diagonal components are determined by the heat kernels of the restricted operators expressed in terms of the radial differential operators $\tilde{\D}_1$ and  $\D_2$:
\begin{enumerate}
    \item For $i \in \{1, \dots, n-1\}$, $e_{i,i}\bar{\theta}^i$ is the heat kernel of $\Delta_{\mathbb{H}^n}^{(1)}\vert_{A_i}$, which  corresponds to the action of $\tilde{\D}_1$ on the radial variable $\rho$.
    \item For $i = n$, $e_{n,n} d\rho$ is the heat kernel of $\Delta_{\mathbb{H}^n}^{(1)}\vert_{B}$, which  corresponds to the action of $\D_2$ on the radial variable $\rho$.
\end{enumerate}
\end{prop}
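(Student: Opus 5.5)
The plan is to imitate the Euclidean computation in Section 2.1, with the decomposition (\ref{dec}) and (\ref{dec'}) playing the role of the identity $\vec{\Delta}^{(1)}_{\mathbb{R}^n}(f\,dx^i)=(\Delta_{\mathbb{R}^n}f)\,dx^i$.

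\textbf{Step 1: invariance.} Take an initial datum $\omega_0\in W$ and write
\[
\omega_0=\sum_{i=1}^{n-1} h_i^0(\rho)\,\bar\theta^i+h^0(\rho)\,d\rho .
\]
By (\ref{dec}) and (\ref{dec'}), each summand $A_i$ and $B$ is invariant under $\vec{\Delta}^{(1)}_{\mathbb{H}^n}$. On $A_i$ the operator acts on the coefficient by the scalar radial operator $\tilde\Delta_1$; on $B$ it acts by $\Delta_2$.

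\textbf{Step 2: decoupling.} We make the ansatz
\[
\omega(t)=\sum_i h_i(t,\rho)\,\bar\theta^i+h(t,\rho)\,d\rho .
\]
The system $(\partial_t+\vec{\Delta}^{(1)}_{\mathbb{H}^n})\omega=0$, $\omega(0)=\omega_0$, then splits into $n$ independent scalar Cauchy problems:
\[
(\partial_t+\tilde\Delta_1)h_i=0,\quad h_i(0)=h_i^0, \qquad (\partial_t+\Delta_2)h=0,\quad h(0)=h^0 .
\]
By uniqueness of the heat semigroup solution in $L^2$ (the operator is essentially self-adjoint on a complete manifold), this decoupled solution is \emph{the} solution. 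We then choose $\omega_0=h_1^0\bar\theta^1$ with all other coefficients zero. The solution stays in $A_1$.

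\textbf{Step 3: reading off the kernel.} We compare with (\ref{eq2form}) written in the orthonormal coframe, where the pointwise inner products are Kronecker deltas. The $\bar\theta^k_x$- and $d\rho_x$-coefficients of $\omega(t,x)$ are
\[
\int e_{k,1}(t,x,y)\,h_1^0(y)\,dy ,
\]
with the indexing convention of the displayed expansion. These coefficients must vanish for every component except the first. Since $h_1^0$ ranges over a dense set of admissible radial functions, we get $e_{k,1}=0$ for $k\neq1$. Repeating this for each basis element gives $e_{i,j}=0$ for $i\neq j$.

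The surviving coefficient satisfies
\[
h_1(t,x)=\int e_{1,1}(t,x,y)\,h_1^0(y)\,dy .
\]
So $e_{1,1}\bar\theta^1$ reproduces the heat flow of $\vec{\Delta}^{(1)}_{\mathbb{H}^n}\vert_{A_1}$, i.e.\ of $\tilde\Delta_1$ on the radial coefficient. The same argument with datum $h^0\,d\rho$ identifies $e_{n,n}\,d\rho$ with the heat kernel of $\vec{\Delta}^{(1)}_{\mathbb{H}^n}\vert_B$, governed by $\Delta_2$ from Remark~\ref{eq1formszero}.

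\textbf{Main obstacle.} The delicate point is Step 3's density argument. The coefficients in $W$ are functions of $\rho$ alone, and the $\theta^i$ form only a \emph{local} coframe, so "arbitrary $h^0$" must be understood in this restricted class. I would handle this by testing against compactly supported radial coefficients on a chart domain. The kernel components are smooth, so a function annihilated by all such tests vanishes there. The remaining work is justifying that the decoupled solution is indeed the semigroup solution; that follows from uniqueness of bounded solutions of the heat equation on forms on the complete manifold $\mathbb{H}^n$.
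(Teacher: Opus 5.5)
Your strategy is the paper's own. Its proof also writes down the decoupled Cauchy problems given by (\ref{dec}) and (\ref{dec'}) and then appeals to ``the same procedure as in $\mathbb{R}^n$''. Your Steps 1--2 are essentially fine (modulo the usual care at $\rho=0$). They give exactly the operator identity (\ref{dec''}): $e^{-t\vec{\Delta}^{(1)}_{\mathbb{H}^n}}$ maps each $A_i$ and $B$ into itself.

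The gap is in Step 3, and the repair in your last paragraph does not close it. With datum $h_1^0(\rho)\bar\theta^1$, the vanishing of the $\bar\theta^k_x$- or $d\rho_x$-coefficient of $\omega(t,x)$ only yields
\[
\int_0^\infty h_1^0(\rho)\Big(\int_{S^{n-1}} e_{k,1}\big(t,x,(\rho,\phi)\big)\,d\phi\Big)\sinh^{n-1}\rho\,d\rho=0\quad\text{for all } h_1^0 .
\]
So you only learn that the averages of $e_{k,1}(t,x,\cdot)$ over the geodesic spheres $\{\rho_y=\rho\}$ vanish. Radial functions are not dense in $L^2(\mathbb{H}^n)$. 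Nor are there nonzero radial coefficients compactly supported in a polar chart domain $(a,b)\times V$ with $V\neq S^{n-1}$, since the support of a function of $\rho$ consists of whole spheres. An angular cut-off $\chi(\phi)h(\rho)\bar\theta^1$ does not help either: it is no longer in $W$, so (\ref{dec'}) no longer decouples the equation. In $\mathbb{R}^n$ the step worked only because $f_1^0$ ranged over all functions of $y$ and the coframe $\{dx^i\}$ is parallel.

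No sharper density argument can give pointwise vanishing of the components of the full kernel $\vec{p}(t,x,y)$, because that statement is false in general. Take the flat analogue $\mathbb{R}^2$ with polar coframe $\{\rho\,d\phi,\,d\rho\}$; here $\theta=d\phi$ is the harmonic, hence co-closed, $1$-form on $S^1$. The $d\rho_x\otimes\rho_y\,d\phi_y$ component of $p_t(x,y)\,(dx^1\otimes dy^1+dx^2\otimes dy^2)$ equals $p_t(x,y)\sin(\phi_x-\phi_y)\neq 0$, even though its angular average is zero.

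The same happens on $\mathbb{H}^n$. For small $t$ the kernel is, to leading order, a multiple of parallel transport along the geodesic from $y$ to $x$. For generic $x\neq y$ this transport does not carry $\bar\theta^1_y$ into the orthogonal complement of $d\rho_x$.

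To make the proposition and your Step 3 correct, state explicitly that $\vec{p}\vert_W$ is the kernel of $e^{-t\vec{\Delta}^{(1)}_{\mathbb{H}^n}}$ acting on the coefficient vector $(h_1,\dots,h_{n-1},h)$. Its entries are then functions of $(t,\rho_x,\rho_y)$ only. For such kernels, testing against all radial $h_j^0$ does force the off-diagonal entries to vanish, and it identifies the diagonal ones with the kernels of $\tilde{\Delta}_1$ and $\Delta_2$. The paper's appeal to the Euclidean procedure passes over this same point without comment.
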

\begin{proof}
We show how (\ref{dec}) gives
\begin{equation}\label{dec''}
e^{-t\vec{\Delta}^{(1)}_{\mathbb{H}^n} \vert_{W}}=\bigoplus_{i=1}^{n-1}e^{-t\vec{\Delta}^{(1)}_{\mathbb{H}^n}\vert_{ A_i}} \oplus e^{-t\vec{\Delta}^{(1)}_{\mathbb{H}^n}\vert_B}
\end{equation}
and this gives the conclusion. We expand a $1$-form $\omega$ in $W$, as in Section \ref{section3}, 
\[
\omega (x)= h_1(\rho_x) \bar{\theta}_x^1 + \dots + h_{n-1}(\rho_x) \bar{\theta_x}^{n-1} + h(\rho_x) d\rho_x.
\]
Then by (\ref{dec}) we have that the heat equation
$$
    \begin{cases}
     \left( \partial_t +\vec{\Delta}^{(1)}_{\mathbb{H}^n}\vert_W\right) \omega(t,x)=0 \\
     \;\omega (x, 0)= \omega_0
    \end{cases},
$$
where 
\[
\omega(t,x) = h_1(t,\rho_x) \bar{\theta}_x^1 + \dots + h_{n-1}(t,\rho_x) \bar{\theta_x}^{n-1} + h(t,\rho_x)  d\rho_x,
\]
$$\omega_0(x)=h_1^0(\rho_x) \bar{\theta}_x^1 + \dots + h_{n-1}^0(\rho_x)\bar{\theta}_x^{n-1} + h^0(\rho_x)  d\rho_x$$
is equivalent to 
$$
    \begin{cases}
     \left( \partial_t + \vec{\Delta}^{(1)}_{\mathbb{H}^n}\vert_{ A_i}\right) \left(h_i(t,\rho_x)\bar{\theta}_x^i\right)=0 \\
     \;h_i(0,\rho_x)\bar{\theta}_x^i= h_i^0(\rho_x)\bar{\theta}_x^i,  
    \end{cases}, i=1,\dots , n-1
$$ and 
$$
    \begin{cases}
     \left( \partial_t + \vec{\Delta}^{(1)}_{\mathbb{H}^n}\vert_{ B}\right) \left( h_i(t,\rho_x)d\rho_x \right)=0 \\
     \;h_i(0,\rho_x)d\rho_x= h_i^0(\rho_x)d\rho_x  
    \end{cases}.
    $$ 
Now using this equivalence, we can show by the same procedure as in $\mathbb{R}^n$ the relation (\ref{dec''}).
\end{proof}

\begin{Remark}\label{rem. 4.2}
By Remarks \ref{eq1formszero} and \ref{eq2formszero}
 we have $\D_2=\tilde{\D}_1 +(n-2)I$. Thus, the heat kernel of $\vec{\Delta}^{(1)}_{\mathbb{H}^n}\vert_{ B}$, $\vec{K}^{(1)}_{\mathbb{H}^n} \vert_{ B} (t,\rho)$ is related to the heat kernel of $\vec{\Delta}^{(1)}_{\mathbb{H}^n}\vert_{ A_i}$, $\vec{K}^{(1)}_{\mathbb{H}^n} \vert_{ A_i} (t,\rho)$ by
 $$
\vec{K}^{(1)}_{\mathbb{H}^n} \vert_{ B} (t,\rho)=e^{-(n-2)t} \;\vec{K}^{(1)}_{\mathbb{H}^n} \vert_{ A_i} (t,\rho)
 $$
Thus, it is enough to compute the heat kernel $\vec{K}^{(1)}_{\mathbb{H}^n} \vert_{ A_i} (t,\rho)$ only.
 \end{Remark}

\subsection{The heat kernel of $\vec{\Delta}^{(1)}_{\mathbb{H}^n}\vert_{ A_i}$}

For the computation of the heat kernel of $\vec{\Delta}^{(1)}_{\mathbb{H}^n}\vert_{ A_i}$, we proceed as in Theorem 6.7 in  \cite{Awonusika} by replacing the radial Laplacian of $\mathbb{H}^n$,
$\D_{\mathbb{H}^n}^{\textbf{rad}}$ with $\tilde{\D}_1\theta^i$  as indicated by  (\ref{dec'}). The analog of Theorem \ref{Thm 6.7 (0-forms)} for  $\vec{\Delta}^{(1)}_{\mathbb{H}^n}  \vert_{ A_i}$ is as follows.
\begin{theorem} \label{Thm6.7 (1-forms)}
   Let  $\left(\vec{\Delta}^{(1)}_{\mathbb{H}^n}  \vert_{ A_i}\right)':=  \vec{\Delta}^{(1)}_{\mathbb{H}^n}  \vert_{ A_i}+ \frac{(n-3)^2}{4}$. Define the operator $h\left( \sqrt{\left(\vec{\Delta}^{(1)}_{\mathbb{H}^n}  \vert_{ A_i}\right)'} \right)$ via the functional calculus for a suitable function $h$ satisfying
\begin{equation*}
h\left( \sqrt{\left(\vec{\Delta}^{(1)}_{\mathbb{H}^n}  \vert_{ A_i}\right)'} \right) \eta(w) = \int_{\mathbb{H}^n} <\vec{K}^{(1)}_{\mathbb{H}^n} \vert_{ A_i}(w, w'), \eta(w')>_{w'} \, d\mu_{\mathbb{H}^n}(w').
\end{equation*}
Then, its kernel $\vec{K}^{(1)}_{\mathbb{H}^n} \vert_{ A_i}(w, w')= K_{1,\mathbb{H}^n} \vert_{ A_i} (\rho) \sinh \rho \ \theta^i$, is given by
\begin{equation}\label{6.54'}
   K_{1,\mathbb{H}^n} \vert_{ A_i} (\rho)= \frac{2^{n-2}\Gamma\left(\frac{n+2}{2}\right)}{\pi^{\frac{n}{2}+2}} \sinh \rho\int_0^\infty  \frac{h(r)}{\left(\left(\frac{n-1}{2}\right)^2+ r^2\right)}\sigma_{\mathbb{H}^{n+2}}(r) \mathcal{C}_{-\frac{n+1}{2}-ir}^{\frac{n+1}{2}}(\cosh \rho)dr.
\end{equation} 
\end{theorem}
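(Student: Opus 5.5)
The plan is to reduce $\tilde{\D}_1$ to the radial Laplacian of $\mathbb{H}^{n+2}$ by conjugating with $\sinh\rho$, and then to rerun the proofs of Theorems \ref{Thm G. H^n 0-f.} and \ref{Thm 6.7 (0-forms)} for the conjugated operator.

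\textbf{Step 1 (intertwining identity).} By (\ref{dec'}) the problem is the one-dimensional operator $\tilde{\D}_1$ of Remark \ref{eq2formszero}, acting on $L^2([0,\infty),\sinh^{n-1}\rho\,d\rho)$. A direct computation with $g=\sinh\rho\, f$ (expand $g'$, $g''$ and collect the zeroth-order terms) gives
\begin{equation*}
\tilde{\D}_1(\sinh\rho\, f)=\sinh\rho\,\bigl(\D^{\textbf{rad}}_{\mathbb{H}^{n+2}}-2(n-1)\bigr)f .
\end{equation*}
Since $\frac{(n+1)^2}{4}-2(n-1)=\frac{(n-3)^2}{4}$, the following hold.
\begin{itemize}
\item The functions $\sinh\rho\,\Phi^{\mathbb{H}^{n+2}}_r(\rho)=\sinh\rho\,\mathcal{C}^{\frac{n+1}{2}}_{-\frac{n+1}{2}-ir}(\cosh\rho)$ are generalized eigenfunctions of $(\vec{\Delta}^{(1)}_{\mathbb{H}^n}\vert_{A_i})'$ with eigenvalue $r^2$.
\item The map $f\mapsto \sinh\rho\, f$ is unitary from $L^2(\sinh^{n+1}\rho\,d\rho)$ onto $L^2(\sinh^{n-1}\rho\,d\rho)$.
\end{itemize}
This explains the shift $\frac{(n-3)^2}{4}$ in the statement, the appearance of $\sigma_{\mathbb{H}^{n+2}}$, the spherical function of dimension $n+2$, and the prefactor $\sinh\rho$ in (\ref{6.54'}).

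\textbf{Step 2 (Green function).} I would mimic the proof of Theorem \ref{Thm G. H^n 0-f.} for the resolvent of $\tilde{\D}_1$ with spectral parameter $z=(n-3)^2/4-(s-\frac{n+1}{2})^2$, i.e. $z=s(n+1-s)-2(n-1)$.
\begin{itemize}
\item By Step 1, the homogeneous radial equation is solved by $\sinh\rho$ times the $\mathbb{H}^{n+2}$ solution $(u^2-1)^{-\frac{n}{4}}Q^{-\frac{n}{2}}_{s-\frac{n+2}{2}}(u)$, $u=\cosh\rho$. This is the solution decaying at infinity.
\item The unknown coefficient $\tilde c_n(s)$ is not the one from $\mathbb{H}^{n+2}$. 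It must be fixed by the flux condition for $\tilde{\D}_1$ itself, which has the leading part of the $n$-dimensional operator: integrate over $B(0,\epsilon)$ against $\sinh^{n-1}\rho$, as in (\ref{bdr a. b. Ball}).
\item Then match with the $u\to1^+$ asymptotics of $\sinh\rho\,(u^2-1)^{-n/4}Q$ via (\ref{Batem.(11)}), and rewrite $Q$ through the hypergeometric form (\ref{Bateman10}).
\end{itemize}
This produces $\tilde c_n(s)$ as a ratio of Gamma functions. Relative to $c_{n+2}(s)$ it should carry an extra factor of the form $1/\bigl((s-1)(n-s)\bigr)$, which on $s=\frac{n+1}{2}\pm ir$ equals $1/\bigl(\frac{(n-1)^2}{4}+r^2\bigr)$.

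\textbf{Step 3 (spectral measure).} I would evaluate the jump $R(\frac{n+1}{2}-ir)-R(\frac{n+1}{2}+ir)$ exactly as in (\ref{4.15})--(\ref{6.56}), using Awonusika's identity with $n$ replaced by $n+2$. It should equal a constant times
\begin{equation*}
\frac{i}{r}\,\frac{\sinh\rho\;\sigma_{\mathbb{H}^{n+2}}(r)}{\frac{(n-1)^2}{4}+r^2}\,\mathcal{C}^{\frac{n+1}{2}}_{-\frac{n+1}{2}-ir}(\cosh\rho).
\end{equation*}
Inserting this into Stone's formula and the functional calculus, as in (\ref{pirho})--(\ref{6.64}), gives (\ref{6.54'}). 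The vector-valued kernel $K\sinh\rho\,\theta^i$ is then recovered from (\ref{dec'}) and the pairing with $\bar\theta^i$.

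\textbf{Expected obstacle.} The delicate point is the normalization in Step 2: getting exactly the factor $2^{n-2}\Gamma(\frac{n+2}{2})/\pi^{\frac n2+2}$, which is half the $\mathbb{H}^{n+2}$ constant, together with the $1/(\frac{(n-1)^2}{4}+r^2)$ weight. The difficulty is that the flux is computed in dimension $n$ while the special functions are those of dimension $n+2$, so Gamma-function bookkeeping errors are easy. As a check, I would verify the result by letting $\rho\to0$ and comparing with the Euclidean local behaviour. I would also cross-check it against the recursion (\ref{6.59}), which should convert $\sigma_{\mathbb{H}^{n+2}}/\bigl(\frac{(n-1)^2}{4}+r^2\bigr)$ into a multiple of $\sigma_{\mathbb{H}^n}$.
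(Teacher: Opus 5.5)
\textbf{The gap is in Step 2: the normalization you propose cannot produce the weight $\bigl(\frac{(n-1)^2}{4}+r^2\bigr)^{-1}$ in (\ref{6.54'}).}

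First, the ball integration is not available for $\tilde{\Delta}_1$. Because of the term $\frac{n-1}{\sinh^2\rho}$, the indicial roots at $\rho=0$ are $1$ and $1-n$, so the decaying solution behaves like $C\rho^{1-n}$, as in (\ref{as. b.Q}). Then $\sinh^{n-1}\epsilon\,\tilde f_s'(\epsilon)\sim(1-n)C\epsilon^{-1}$, and $\int_0^\epsilon(n-1)\sinh^{n-3}\rho\,\tilde f_s(\rho)\,d\rho$ diverges. So there is no finite flux to set equal to $1/\omega_{n-1}$. This is exactly why the paper says it does not integrate over $B(0,\epsilon)$ at this point.

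Second, and more decisively, any point-source normalization fixes the leading singular coefficient independently of $s$, because the spectral term is lower order at $\rho=0$. Your solution is $\sinh\rho$ times the $\mathbb{H}^{n+2}$ solution, with the same $Q$-function. So such a normalization forces $\tilde c_n(s)$ to be a constant multiple of $c_{n+2}(s)$. The factor $1/\bigl((s-1)(n-s)\bigr)$ you announce is asserted, not derived.

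Your own Step 1 confirms this. It makes $\tilde{\Delta}_1$ unitarily equivalent to $\Delta^{\textbf{rad}}_{\mathbb{H}^{n+2}}-2(n-1)$. Carried out consistently, Steps 1--3 therefore give the density $h(r)\,\sigma_{\mathbb{H}^{n+2}}(r)$ with no weight, which is not (\ref{6.54'}). The weight is the spectral multiplier of $(\tilde{\Delta}_1+(n-2))^{-1}$: at the spectral value $\frac{(n-3)^2}{4}+r^2$ of $\tilde{\Delta}_1$, the operator $\tilde{\Delta}_1+(n-2)$ takes the value $\frac{(n-1)^2}{4}+r^2$. It has to enter from somewhere other than a flux condition.

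\textbf{The missing idea is the paper's supersymmetric relation.} On $L^2\bigl((0,\infty),\sinh^{n-1}\rho\,d\rho\bigr)$, take $Df=f'+(n-1)\coth\rho\,f$ and $D^*=-\frac{d}{d\rho}$. Then $D^*D=\tilde{\Delta}_1+(n-2)$ and $DD^*=\Delta^{\textbf{rad}}_{\mathbb{H}^n}$; the partner operator lives in dimension $n$, not $n+2$.

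Apply Theorem \ref{App. Thm}(c) with $z=\ell_0(s)=(s+1)(n-2-s)$, chosen so that $D^*D-z=\tilde{\Delta}_1-s(n-3-s)$. This gives (\ref{SUSY res.}) and then (\ref{Green.rel.}), namely $R_1(\rho;s)=-\ell_0(s)^{-1}\partial_\rho R_0(\rho;s+1)$. Inserting the scalar asymptotics (\ref{bdr a. b. Ball}) yields (\ref{as. b. SUSY}), whose singular coefficient depends on $s$ through $1/\ell_0(s)$. Matching with (\ref{as. b.Q}) gives the $c_n(s)$ of Theorem \ref{Green1-for}.

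The paper's parameter is your $s$ shifted by $-2$. In your parametrization, $1/\ell_0$ is exactly your $1/\bigl((s-1)(n-s)\bigr)$. On the paper's line $s=\frac{n-3}{2}\pm ir$ it becomes $\bigl(\frac{(n-1)^2}{4}+r^2\bigr)^{-1}$.

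The rest of your plan matches the paper. The homogeneous solution from Step 1 is (\ref{sol.2}). Step 3 is the same jump computation via Awonusika's identity with $n\to n+2$, followed by Stone's formula. What must change is that the coefficient $c_n(s)$ comes from this relation, not from a flux condition.
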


As seen above, the proof of Theorem \ref{Thm 6.7 (0-forms)} is heavily based on the formula of Green's function of the Laplacian on functions on $\mathbb{H}^{n}$.  Thus, for the proof of Theorem \ref{Thm6.7 (1-forms)} we also have to obtain the Green Function of  $\vec{\Delta}^{(1)}_{\mathbb{H}^n}  \vert_{ A_i}$.

For the computation of the Green's Function of  $\vec{\Delta}^{(1)}_{\mathbb{H}^n}  \vert_{ A_i}$, we proceed as in the proof of Theorem \ref{Thm G. H^n 0-f.} with a minor change in the computation of its asymptotic behavior. This minor change will be based on the fact that the radial Laplacian, $\D_{\mathbb{H}^n}^{\textbf{rad}}$, on $\mathbb{H}^n$ is unitarily equivalent to $\tilde{\Delta}_1$  under some shift. This fact, that $\D_{\mathbb{H}^n}^{\textbf{rad}}$  is unitarily equivalent to $\tilde{\Delta}_1$, is known from Lemma 2.1 and Theorem 2.2 in \cite{donnelly1981differential} but for our purpose we have to prove it with a different method based on supersymmetric framework. 
\begin{theorem} \label{Green1-for}
The Green function $\vec{R}^{(1)}_{\mathbb{H}^n}  \vert_{ A_i} (\rho;s)$ for $\vec{\Delta}^{(1)}_{\mathbb{H}^n}  \vert_{ A_i}$  is equal to $R_1(\rho;s) \vert_{ A_i}  \sinh \rho \ \theta^i$, where
\begin{equation}\label{R1(s,r)}
   R_1( \rho;s) = c_n(s) \sigma^{-(2+s)} \sqrt{4 \sigma(\sigma-1)} \;F\left(2+s, 2 - \frac{n}{2}+s, 4-n+ 2s, \frac{1}{\sigma}\right)
\end{equation}
with $\sigma := \cosh^2(\rho/2)$ and 
\begin{equation}
    c_n(s) = \frac{1}{(s+1)(n-2-s) \cdot \pi^{n/2} \cdot 2^{n+1} \cdot }  \frac{\Gamma(2+s) \Gamma\left(2 - \frac{n}{2}+s\right)}{\Gamma(4-n+ 2s)}
\end{equation}
and $\theta^i$ is as defined in Section \ref{section3}.
\end{theorem}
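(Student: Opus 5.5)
The plan is to reduce the Green function of $\vec{\Delta}^{(1)}_{\mathbb{H}^n}\vert_{A_i}$ to the scalar radial problem already solved in Theorem \ref{Thm G. H^n 0-f.}. By (\ref{dec'}) and Remark \ref{eq2formszero}, it suffices to find the kernel of $(\tilde{\Delta}_1 - z)^{-1}$ acting on the coefficient $h(\rho)$, with $z$ parametrized by $s$. The structural input is the supersymmetric intertwining relation with $\frac{\partial}{\partial\rho}$. Suppose $g$ is a radial solution of $(\Delta^{\textbf{rad}}_{\mathbb{H}^n} - \mu)g = 0$. Differentiating $-g''-(n-1)\coth\rho\, g' = \mu g$ once in $\rho$, and using $(\coth\rho)' = -\sinh^{-2}\rho$, shows that $f = g'$ satisfies
$$-f''-(n-1)\coth\rho\, f' + \frac{n-1}{\sinh^2\rho} f = \mu f, \qquad \text{i.e.}\qquad \tilde{\Delta}_1 f = (\mu-(n-2)) f .$$
So $\frac{\partial}{\partial\rho}\,\Delta^{\textbf{rad}}_{\mathbb{H}^n} = (\tilde{\Delta}_1+(n-2))\,\frac{\partial}{\partial\rho}$. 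This is the unitary-equivalence-under-a-shift statement. I would phrase it via the factorization $\Delta^{\textbf{rad}} = \partial^*\partial$, $\tilde{\Delta}_1+(n-2) = \partial\partial^*$ from the Appendix. The point is that the nonzero spectra and the generalized eigenfunctions correspond under $\partial_\rho$.

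The key steps, in order, are as follows.

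First, solve the homogeneous equation $(\tilde{\Delta}_1 - z)f = 0$ on $\rho>0$ by the substitution $u=\cosh\rho$. The singular potential $(n-1)/\sinh^2\rho$ turns the equation into an associated Legendre equation of order $\mu=-n/2$ rather than $1-n/2$. Equivalently, take $f$ to be $\partial_\rho$ of the scalar solution (\ref{sol.}). Among the solutions, select the one decaying at infinity; it is $(u^2-1)^{-n/4}Q^{-n/2}_{\nu}(u)$ up to a constant, with $\nu$ determined by $s$.

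Second, rewrite this solution in the hypergeometric form via (\ref{Bateman10}) and pass to $\sigma=\cosh^2(\rho/2)$. Here $\sqrt{4\sigma(\sigma-1)} = \sinh\rho$ and $d\sigma/d\rho = \tfrac12\sinh\rho$. The identity $\frac{d}{dx}\bigl[x^{a}F(a,b;c;x)\bigr] = a\,x^{a-1}F(a+1,b;c;x)$ (with $x=1/\sigma$) then produces the extra factor $\sinh\rho$ and the shifted parameters. I would match the indices in (\ref{R1(s,r)}), namely $(2+s,\,2-\tfrac n2+s;\,4-n+2s)$, by choosing the correct relabeling of $s$. The factor $(s+1)(n-2-s)$ is exactly the eigenvalue produced when $\partial_\rho$ intertwines the scalar resolvent parameter with that of $\tilde{\Delta}_1$, and this gives a consistency check on the relabeling.

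Third, fix the constant $c_n(s)$ from the delta-function condition at $\rho=0$, as in the proof of Theorem \ref{Thm G. H^n 0-f.}. Integrate $(\vec{\Delta}^{(1)}\vert_{A_i} - z)\vec R = \delta$ against the frame $\bar\theta^i$ over a small ball $B(0,\epsilon)$. The $L^2_{\rm loc}$ and potential terms must be checked to vanish, and the flux term must then equal $1/\omega_{n-1}$. Next compute the leading asymptotics of $R_1$ as $\rho\to0^+$, using (\ref{Batem.(11)}) with $\mu=-n/2$. Comparing the two asymptotics determines $c_n(s)$.

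The main obstacle is this normalization step. Because of the $\sinh^{-2}\rho$ term and the factor $\sinh\rho\,\theta^i$ relating $A_i$ to the frame, the leading singularity is $\sim\rho^{1-n}$ times $\sinh\rho$, rather than $\rho^{2-n}$. One therefore has to track carefully which quantity, $R_1$ or $R_1\sinh\rho$, carries the flux, and how the pairing $\langle\cdot,\cdot\rangle_{w'}$ with the orthonormal coframe enters. I would first try to avoid redoing the flux computation: apply $\partial_\rho$ to the already normalized scalar Green function, using the intertwining identity at the operator level. The constant $1/((s+1)(n-2-s))$ would then appear automatically from $\partial^*\partial$ acting on the resolvent. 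Only if that bookkeeping fails would I fall back on the direct ball-integration argument.
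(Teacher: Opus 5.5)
Your proposal is correct. Its primary line, the one you say you would try first, reaches the statement by a shorter route than the paper.

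\textbf{Shared input.} Both arguments rest on the same ingredients:
\begin{itemize}
\item the factorization $\Delta^{\textbf{rad}}_{\mathbb{H}^n}=\partial^*\partial$, $\tilde\Delta_1+(n-2)=\partial\partial^*$, where your $\partial=\partial_\rho$ is the paper's $-D^*$ on $L^2((0,\infty),\sinh^{n-1}\rho\,d\rho)$;
\item part (c) of Theorem \ref{App. Thm} at $z=\ell_0(s)=(s+1)(n-2-s)$.
\end{itemize}
Together these give the paper's relation (\ref{Green.rel.}), $R_1(\rho;s)=-\ell_0(s)^{-1}\partial_\rho R_0(\rho;s+1)$. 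Passing from the two-point kernel $\ell_0^{-1}\partial_\rho\partial_{\rho'}R_0$ to this one-variable form is a step the paper takes at the centred point, and you inherit it.

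\textbf{Where the routes differ.} The paper uses this relation only to extract the leading term $\rho^{1-n}$ as $\rho\to0$. It obtains the function itself separately: it solves the Legendre equation with $(u^2-1)^{\frac{2-n}{4}}Q^{-n/2}_{1-n/2+s}(u)$, fixes $c_n(s)$ by matching against (\ref{Batem.(11)}), and converts with (\ref{Bateman10}). You instead insert the closed form of Theorem \ref{Thm G. H^n 0-f.} and differentiate. Using $x=1/\sigma$, $d\sigma/d\rho=\tfrac12\sinh\rho$ and $\frac{d}{dx}[x^aF(a,b;c;x)]=a\,x^{a-1}F(a+1,b;c;x)$, then shifting $s\mapsto s+1$, one gets
\[
-\frac{1}{\ell_0(s)}\,\partial_\rho R_0(\rho;s+1)=\frac{1}{2^{n+1}\pi^{n/2}\,\ell_0(s)}\,\frac{\Gamma(s+2)\Gamma(s+2-\frac n2)}{\Gamma(2s+4-n)}\,\sigma^{-(2+s)}\sinh\rho\;F\Big(2+s,2-\tfrac n2+s;4-n+2s;\tfrac1\sigma\Big).
\]
This is exactly the statement, with parameters and constant obtained at once. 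Your Steps 1 and 3 therefore become optional consistency checks. Your route is shorter and avoids the sign bookkeeping in the paper's asymptotic matching. The paper's route independently confirms that the formula solves the ODE and is the decaying solution.

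\textbf{Two corrections.}
\begin{itemize}
\item In Step 1 the decaying solution is $(u^2-1)^{\frac{2-n}{4}}Q^{-n/2}_{1-\frac n2+s}(u)$, not $(u^2-1)^{-n/4}Q^{-n/2}_\nu(u)$. The latter is the $u$-derivative of the scalar solution; $\partial_\rho=\sinh\rho\,\partial_u$ supplies the missing factor $(u^2-1)^{1/2}$.
\item Do not fall back on the ball integration. Since $R_1\sim C\rho^{1-n}$, the flux term $\sinh^{n-1}\epsilon\,R_1'(\epsilon)\sim C(1-n)\epsilon^{-1}$ and the potential term $\int_0^\epsilon\frac{n-1}{\sinh^2\rho}R_1\sinh^{n-1}\rho\,d\rho$ both diverge, with their leading parts cancelling. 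Also $R_1\notin L^2_{\rm loc}$ for $n\ge2$. So that argument yields no normalization, which is precisely why the paper switches to the supersymmetric method at this point.
\end{itemize}
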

\begin{proof}
By \cite{donnelly1981differential} the spectrum of  $\vec{\Delta}^{(1)}_{\mathbb{H}^n}  \vert_{ A_i}$  is $[\frac{(n-3)^2}{4}, \infty)$. Thus, for  $\vec{\Delta}^{(1)}_{\mathbb{H}^n}  \vert_{ A_i}$, by the equation (\ref{eignv eq. f.}) with $k=1$, we have to take $z=s(n-3-s)$, so the resolvent of $\vec{\Delta}^{(1)}_{\mathbb{H}^n}  \vert_{ A_i}$ is 
$$
\vec{R}^{(1)}_{\mathbb{H}^n} \vert_{ A_i}(s) := (\vec{\Delta}^{(1)}_{\mathbb{H}^n}  \vert_{ A_i} - s(n - 3- s))^{-1} , 
$$
 for  $\text{Re } s > \frac{n-3}{2} $ with  $ s \notin [\frac{n-3}{2},n-3]$ .
Similarly to the proof of Theorem \ref{Thm G. H^n 0-f.}, we have that the Green function of  $\vec{\Delta}^{(1)}_{\mathbb{H}^n}  \vert_{ A_i}$ coincides with the integral kernel $\vec{R}^{(1)}_{\mathbb{H}^n} \vert_{ A_i} (w,w',s) $ of  $ \vec{R}^{(1)}_{\mathbb{H}^n} \vert_{ A_i}(s)$. Thus, by the definition of $\vec{R}^{(1)}_{\mathbb{H}^n} \vert_{ A_i}(s)$ we have
\begin{equation*} 
(\vec{\Delta}^{(1)}_{\mathbb{H}^n}  \vert_{ A_i} - s(n - 3 - s))\vec{R}^{(1)}_{\mathbb{H}^n}  \vert_{ A_i} (w,w',s)  = \vec{\delta}^{(1)} \vert_{ A_i} (w - w').
\end{equation*}
By setting $R_{1}(w,w';s)=\tilde{f_s}(d(w,w')) $ for some function $\tilde{f_s}$ and considering geodesic polar coordinates $ (\rho, \p) $ centered on $w'$ and setting $\rho=d(w,w')$ we have $\vec{R}^{(1)}_{\mathbb{H}^n} \vert_{ A_i} (w,w';s)=\vec{R}^{(1)}_{\mathbb{H}^n}  \vert_{ A_i} (\rho;s)=\tilde{f_s}(\rho)\sinh \rho \ \theta^i$. Using this and  (\ref{dec'})  the above equation reduces to 
\begin{equation} \label{Bdr Ball 1-f.}
(\tilde{\Delta}_1- s(n - 3 - s))\tilde{f_s}(\rho) = \frac{\delta(\rho)}{\sinh^{n-1} \rho}.
\end{equation}
By Remark \ref{eq2formszero}  the  homogeneous equation corresponding to (\ref{Bdr Ball 1-f.}) is
 \begin{equation} \label{(4.8)'}
   - \tilde{f_s}(\rho)'' - (n-1) \coth \rho \, \tilde{f_s}(\rho)' + \left[ -(n-2)- s(n - 3 - s)  +\frac{ n-1}{\sinh^{2} \rho} \right] \tilde{f_s}(\rho)=0.
\end{equation}
If we make the transformation $ h_s(u) =\tilde{f_s}(\rho)$ given by $u :=\cosh{\rho}$ in (\ref{(4.8)'}) (for simplicity we write $h(u)$ for $h_s(u)$), then (\ref{(4.8)'}) becomes
\begin{equation}\label{eq1}
    (1-u^2)h''(u) - nu h'(u) + \left[ -n + 2 - s(n-3-s) - \frac{n-1}{1-u^2} \right] h(u) = 0.
\end{equation}
Since the solution to the above equation should be regular at infinity when $\text{Re} s> \frac{n-3}{2}$, the solution should be 

\begin{equation*} \label{sol.1}
    h(u)=c_n(s) (u^2 - 1)^{\frac{2 - n}{4}}P^{\frac{n}{2}}_{-2+\frac{n}{2} - s}(u),
\end{equation*}
for some coefficient $c_n(s)$. As in the (\ref{sol.}) we consider instead the solution 
\begin{equation} \label{sol.2}
    h(u)=c_n(s) (u^2 - 1)^{\frac{2 - n}{4}}Q^{-\frac{n}{2}}_{1-\frac{n}{2} + s}(u).
\end{equation}

Now, in order to determine the coefficient $c_n(s)$ we follow the same technique as in the proof of Theorem \ref{Thm G. H^n 0-f.}. First, we use the asymptotic behavior of $Q^\m_\nu(u)$ as $u \to 1^+$ given by  (\ref{Batem.(11)}) and find \begin{equation*}
     Q_{1-\frac{n}{2} + s}^{\frac{-n}{2}}(u) \sim 2^{\frac{n}{4} - 1} \frac{\Gamma\left(\frac{n}{2}\right) \Gamma\left(\nu + \mu + 1\right)}{\Gamma\left(2 + s\right)} (1 - u)^{-\frac{n}{4}}, \quad u \to 1^+
 \end{equation*}

which give us  
\begin{equation*}
    (u^2 - 1)^{\frac{2 - n}{4}}Q_{1-\frac{n}{2} + s}^{\frac{-n}{2}}(u) \sim -2^{-\frac{1}{2}} \frac{\Gamma\left(\frac{n}{2}\right) \Gamma\left(\nu + \mu + 1\right)}{\Gamma\left(2 + s\right)} (u-1)^{-\frac{n-1}{2}}, \quad u \to 1^+.
\end{equation*}
By $u =\cosh{\rho} \sim 1+\frac{\rho^2}{2}$ we have $u-1 \sim  \frac{\rho^2}{2}$, so the above equation takes the following form
\begin{equation*}
        (\sinh^2\rho)^{\frac{2 - n}{4}}Q_{1-\frac{n}{2} + s}^{-\frac{n}{2}}(\cosh \rho) \sim -2^{{\frac{n}{2}}-1} \frac{\Gamma\left(\frac{n}{2}\right) \Gamma\left(\nu + \mu + 1\right)}{\Gamma\left(2 + s\right)} \frac{1}{\rho^{n-1}}, \quad \rho \to 0^+.
\end{equation*}
Thus, the solution (\ref{sol.2}) has the asymptotic behavior
\begin{equation}\label{as. b.Q}
    \tilde{f_s}(\rho) \sim -c_n(s) 2^{{\frac{n}{2}}-1} \frac{\Gamma\left(\frac{n}{2}\right) \Gamma\left(\nu + \mu + 1\right)}{\Gamma\left(2 + s\right)} \frac{1}{\rho^{n-1}}, \; \rho \to 0^+.
\end{equation}

Here, instead of integrating (\ref{Bdr Ball 1-f.}) over the ball $B(0,\ep)$ and finding the boundary behavior, we proceed by the supersymmetric method. This method will give us the relation between the resolvent operators of $\tilde{\D}_1$ and $ \D_{\mathbb{H}^n}^{\textbf{rad}}$ (see (\ref{resolv.})), from which we obtain the relation between the corresponding Green functions (see (\ref{Green.rel.}))  and thus we can make use Theorem \ref{Thm G. H^n 0-f.}.

If in Theorem \ref{App. Thm}, we take 
$$Df(\rho):=f(\rho)'+ (n-1)\coth \rho $$
and \[
\mathcal{H}_1=\mathcal H_2=\mathcal H = L^2\big((0,\infty), \sinh^{n-1}\rho\, d\rho\big),
\]
then, since \(D\) is densely defined and closed, Hypothesis~A.1 applies, and by a straightforward calculation, we find 
$$
D^*f(\rho)=-f(\rho)', \; 
H_2=DD^*f(\rho)= \D_{\mathbb{H}^n}^{\textbf{rad}}f(\rho), \;H_1=D^*Df(\rho)=\tilde{\Delta}_1f(\rho)+(n-2)f(\rho).
$$

Furthermore, by a direct calculation we find
\[
\ker(D)=\{0\}, \qquad \ker(D^*)=\{0\}.
\]
Substituting these into part (a) of Theorem \ref{App. Thm} , we obtain
\[
\ker(H_1)=\ker(D^*D)=\{0\}, \qquad \ker(H_2)=\ker(DD^*)=\{0\},
\]
and hence
\[
\overline{\operatorname{ran}(D)}=\overline{\operatorname{ran}(D^*)}=\mathcal H.
\]
Also, substituting \(\ker(D)=\ker(D^*)=\{0\}\),  in part (b) of Theorem \ref{App. Thm} , the unitary equivalence applies without the need for kernel projections, and one concludes that
\[
\tilde{\D}_1+(n-2)I \quad \text{is unitarily equivalent to} \quad \D_{\mathbb{H}^n}^{\textbf{rad}}.
\]
Combining the above, part (c) of Theorem \ref{App. Thm} takes the following form 
\begin{equation} \label{resolv.}
    I +z (H_1 -z I)^{-1}=D^*(H_2-z I)^{-1} D,\; z \in \rho(H_1) \cap \rho (H_2),
\end{equation}
where 
$$
H_2= \D_{\mathbb{H}^n}^{\textbf{rad}}f(\rho), \;H_1=\tilde{\Delta}_1f(\rho)+(n-2)f(\rho).
$$
In order to use (\ref{resolv.}) to obtain the Green function of $(\tilde{\D}_1-s(n-3-s) I)^{-1}$ we have to take the appropriate $z$ in it. This $z$ has to satisfy 

$$
\tilde{\D}_1 +(n-2)I -z I= \tilde{\D}_1 -s(n-3-s) I,
$$
that is  
$$z =s(n-3-s) +(n-2)=-s^2+sn-3s+n-2.$$
If we shift the spectral parameter $s(n-1-s)$ of $\D_{\mathbb{H}^n}^{\textbf{rad}}$ by $s\to s+1$ then 
$$(s+1)(n-1-(s+1))=-s^2+sn-3s+n-2.$$
Thus, we take $z=(s+1)(n-1-(s+1))=(s+1)(n-2-s)$ with $s \in \mathbb{C}, \text{Re} s> \frac{n-3}{2}$ in (\ref{resolv.}) and have
\begin{equation*} 
I + (s+1)(n-2-s)(\tilde{\D}_1-s(n-3-s) I)^{-1}=D^*(\D_{\mathbb{H}^n}^{\textbf{rad}}-(s+1)(n-2-s) I)^{-1} D.
\end{equation*} 
Setting $l_0(s)=(s+1)(n-2-s)$ and $l_1(s)=s(n-3-s)$ we have  
\begin{equation} \label{SUSY res.}
I +l_0(s) (\tilde{\D}_1-l_1(s) I)^{-1}=D^*(\D_{\mathbb{H}^n}^{\textbf{rad}}-l_0(s) I)^{-1} D.
\end{equation}
In what follows, we denote by  $ R_1(\rho, \rho';s)$ and $ R_0(\rho, \rho';s+1)$ the integral kernels of $\tilde{\D}_1, \D_{\mathbb{H}^n}^{\textbf{rad}}$ and set $dw(\rho) := \sinh^{n-1} \rho \, d\rho$. For $f(\rho) \in \mathcal{H}$, we have
\begin{align*}
(\D_{\mathbb{H}^n}^{\textbf{rad}} - \ell_0(s))^{-1} (Df)(\rho) &= \int_0^\infty R_0(\rho, \rho';s+1) Df(\rho') \, dw(\rho') \\
&= \int_0^\infty D_{\rho'}^* R_0(\rho, \rho';s+1) f(\rho') \, dw(\rho')
\end{align*}
where in the last equality, we used the fact that $D^*$ is the adjoint of $D$ in $\mathcal{H}$. Applying $D^*$ we have
\[
D^* (\D_{\mathbb{H}^n}^{\textbf{rad}} - \ell_0(s))^{-1} (Df)(\rho) = \int_0^\infty D_\rho^* D_{\rho'}^* R_0(\rho, \rho';s+1) f(\rho') \, dw(\rho').
\]
Now, we substitute (\ref{SUSY res.}) into the left-hand side to get
\[
\left(I + \ell_0(s) (\tilde{\D}_1 - \ell_1(s))^{-1}\right) f(\rho) = \int_0^\infty D_\rho^* D_{\rho'}^* R_0(\rho, \rho';s+1) f(\rho') \, dw(\rho')
\]
or

\[
(\tilde{\D}_1 - \ell_1(s))^{-1} f(\rho) = -\frac{f(\rho)}{\ell_0(s)} + \frac{1}{\ell_0(s)} \int_0^\infty D_\rho^* D_{\rho'}^* R_0(\rho, \rho';s+1) f(\rho') \, dw(\rho')
\]
By
\[
(\tilde{\D}_1 - \ell_1(s))^{-1} f(\rho) = \int_0^\infty R_1(\rho, \rho';s) f(\rho') \, dw(\rho')
\]
and
\[
f(\rho) = \int_0^\infty \frac{\delta(\rho - \rho')}{\sinh^{n-1} \rho'} f(\rho') \, dw(\rho')
\]
we have
\begin{equation*}
   \int_0^\infty R_1( \rho, \rho';s) f(\rho') \, dw(\rho') 
= \frac{1}{\ell_0(s)} \int_0^\infty \left[ D_\rho^* D_{\rho'}^* R_0(\rho, \rho';s+1) - \frac{\delta(\rho - \rho')}{\sinh^{n-1} \rho'} \right] f(\rho') \, dw(\rho'),
\end{equation*}
which gives us 
\[
R_1(s; \rho, \rho') = \frac{1}{\ell_0(s)} \left[ D_\rho^* D_{\rho'}^* R_0(\rho, \rho';s+1) - \frac{\delta(\rho - \rho')}{\sinh^{n-1} \rho'} \right],
\]
where the equality holds in the sense of  distributions.

Now, recall that $w' = (\rho', \phi')$ is the centered point. Thus for $w\neq w'$ we  have  $d(w, w') = \rho >0$, which implies $\frac{\delta(\rho - \rho')}{\sinh^{n-1} \rho'} = 0$. This yields
\[
R_1( \rho;s) = \frac{1}{\ell_0(s)} D_\rho^* R_0( \rho;s+1).
\]
Thus, finally we have obtained
\begin{equation} \label{Green.rel.}
    R_1(\rho;s) = -\frac{1}{\ell_0(s)} \frac{d}{d\rho} R_0( \rho;s+1).
\end{equation}

Substituting  the asymptotic expansion of  $f_{s+1}(\rho)$ (given by equation (\ref{bdr a. b. Ball})) and $\ell_0(s)$ into the above formula, and noting that $   R_0(\rho;s)=f_s(\rho)$, we find
\begin{equation} \label{as. b. SUSY}
    R_1( \rho;s)\sim  -\frac{1}{(s+1)(n-2-s)}  \frac{\Gamma(\frac{n}{2}  )  }{2 \pi^{\frac{n}{2}}}\frac{1}{\rho^{(n-1)}}, \rho \to 0.
\end{equation}
By comparing (\ref{as. b.Q}) and (\ref{as. b. SUSY}) we get 

\begin{equation}\label{c_n(s) 1-forms}
 c_n(s) = \frac{\Gamma(2+s)}{(s+1)(n-2-s) \cdot \pi^{n/2} \cdot 2^{n/2} \cdot \Gamma(\nu+\mu+1)}.
\end{equation}

Now, to transform (\ref{sol.2}) with respect to the hypergeometric equation, we use (\ref{Bateman10}), which in our case takes the following form

\begin{equation*}
\begin{split}
Q_{s - \frac{n}{2}+1}^{- \frac{n}{2}}(u) = & \ 2^{s - \frac{n}{2}+1} \frac{\Gamma(s - \frac{n}{2} + 2) \Gamma(\nu + \mu + 1)}{\Gamma(2s - n + 4)} (u+1)^{\frac{n}{4} - 2 - s} (u-1)^{\frac{n}{4}} \\
& \cdot F\left(2+s, s - \frac{n}{2} + 2; 2s - n + 4; \frac{2}{1+u}\right)
\end{split}.
\end{equation*}
Substituting the above  into  (\ref{sol.2})  we get
\begin{equation*}
\begin{split}
h_s(u) = &c_n(s) \frac{1}{2^{1+\frac{n}{2}}} (u+1)^{\frac{1}{2}}  (u-1)^{\frac{1}{2}} \frac{2^{2+s}}{(u+1)^{2+s}} \cdot \\
&\frac{\Gamma(s - \frac{n}{2} + 2) \Gamma(\nu + \mu + 1)}{\Gamma(2s - n + 4)} \cdot F\left(2+s, s - \frac{n}{2} + 2; 2s - n + 4; \frac{2}{1+u}\right)
\end{split}
\end{equation*}
and finally substituting $c_n(s)$ from (\ref{c_n(s) 1-forms}), we have
\begin{equation*}
\begin{split}
h_s(u) = & \frac{1}{(s+1)(n-2-s) \cdot \pi^{n/2} \cdot 2^{n+1} \cdot }    (u^2-1)^{\frac{1}{2}} \left(\frac{2}{u+1}\right)^{2+s} \cdot \\
&\frac{\Gamma (2+s)\Gamma(s - \frac{n}{2} + 2) }{\Gamma(2s - n + 4)} \cdot F\left(2+s, s - \frac{n}{2} + 2; 2s - n + 4; \frac{2}{1+u}\right)
\end{split}
\end{equation*}
By a straightforward calculation, returning to the initial variable $\sigma = \cosh ^2 \frac{\rho}{2}$, and applying the following identities
$$\frac{2}{u+1}=\frac{1}{\sigma},\;\;\; (u^2-1)^{\frac{1}{2}}= \sqrt{4\sigma(\sigma-1)}=\sinh \rho$$ we get
\begin{equation*}
\begin{split}
\tilde{f_s}(\rho) = & \frac{1}{(s+1)(n-2-s) \cdot \pi^{n/2} \cdot 2^{n+1} \cdot } \sqrt{4\sigma(\sigma-1)} \left(\frac{1}{\sigma}\right)^{2+s} \cdot \\
&\frac{\Gamma (2+s)\Gamma(s - \frac{n}{2} + 2) }{\Gamma(2s - n + 4)} \cdot F\left(2+s, s - \frac{n}{2} + 2; 2s - n + 4; \frac{1}{\sigma}\right)
\end{split}.
\end{equation*}
\end{proof}

Proof of Theorem \ref{Thm6.7 (1-forms)}:

We proceed analogously to the proof of Theorem \ref{Thm 6.7 (0-forms)}. First, we compute the difference $R_1\left(\sigma; \frac{n-3}{2} - ir\right) - R_1\left(\sigma; \frac{n-3}{2} + ir\right)$.  To evaluate this difference, we record the following identities. For $s=\frac{n-3}{2} - ir$:
$$\Gamma(2+s)=\Gamma(\frac{n+1}{2} - ir),\;\;
\Gamma(2-\frac{n}{2}+s)=\Gamma(\frac{1}{2}-ir),$$
$$\Gamma(4-n+2s)=\Gamma(1-2ir), \;\;(s+1)(n-2-s)=\left(\frac{n-1}{2}\right)^2+ r^2.$$
Likewise, for $s=\frac{n-3}{2} + ir$: 

$$\Gamma(2+s)=\Gamma(\frac{n+1}{2} + ir),\;\;
\Gamma(2-\frac{n}{2}+s)=\Gamma(\frac{1}{2}+ir),$$
$$\Gamma(4-n+2s)=\Gamma(1+2ir), \;\; (s+1)(n-2-s)=\left(\frac{n-1}{2}\right)^2+ r^2.$$
Using (\ref{R1(s,r)}), we have 
\begin{equation}\label{Green.diff.1-form}
\begin{split}
R_1\left(\sigma; \frac{n-3}{2} - ir\right) - R_1\left(\sigma; \frac{n-3}{2} + ir\right) =  \frac{\sqrt{4\sigma(\sigma-1)} }{2\left(\left(\frac{n-1}{2}\right)^2+ r^2\right)  } \cdot \\ \frac{1}{ \pi^{n/2} 2^{n} } \left[ \left(\frac{1}{\sigma}\right)^{\frac{n+1}{2} - ir} \cdot 
\frac{\Gamma (\frac{n+1}{2} - ir)\Gamma(\frac{1}{2}-ir) }{\Gamma(1-2ir)} \cdot F\left(\frac{n+1}{2} - ir,\frac{1}{2}-ir; 1-2ir; \frac{1}{\sigma}\right) \right.\\ -\left.
\left(\frac{1}{\sigma}\right)^{\frac{n+1}{2} + ir} \cdot 
\frac{\Gamma (\frac{n+1}{2} + ir)\Gamma(\frac{1}{2}+ir) }{\Gamma(1+2ir)} \cdot F\left(\frac{n+1}{2} + ir, \frac{1}{2}+ir; 1+2ir; \frac{1}{\sigma}\right)   \right]
\end{split}.
\end{equation}
If we now set 
\begin{equation}
    \begin{aligned}
    G_{r,\sigma}(n):=   \frac{1}{ \pi^{\frac{n+2}{2}} 2^{n+2} } \left[ \left(\frac{1}{\sigma}\right)^{\frac{n+1}{2} - ir} \cdot 
\frac{\Gamma (\frac{n+1}{2} - ir)\Gamma(\frac{1}{2}-ir) }{\Gamma(1-2ir)} \cdot F\left(\frac{n+1}{2} - ir,\frac{1}{2}-ir; 1-2ir; \frac{1}{\sigma}\right) \right.\\ -\left.
\left(\frac{1}{\sigma}\right)^{\frac{n+1}{2} + ir} \cdot 
\frac{\Gamma (\frac{n+1}{2} + ir)\Gamma(\frac{1}{2}+ir) }{\Gamma(1+2ir)} \cdot F\left(\frac{n+1}{2} + ir, \frac{1}{2}+ir; 1+2ir; \frac{1}{\sigma}\right)   \right] 
    \end{aligned},
\end{equation}
then $G_{r,\sigma}(n-2)$  is equal to the right-hand side of (\ref{Green.differ.}).  Thus, by  (\ref{4.15}), we have that  
\begin{equation}\label{400}
    \begin{aligned}
       G_{r,\sigma}(n-2) = &\frac{\Gamma\left(\frac{n-1}{2} - ir\right) \Gamma\left(\frac{n-1}{2} + ir\right) \Gamma\left(\frac{1}{2} + ir\right) \Gamma\left(\frac{1}{2} - ir\right)}{\pi^{\frac{n}{2}} 2^{n} (-2ir) \Gamma(-2ir) \Gamma(2ir) \Gamma\left(\frac{n}{2}\right)} \cdot\\
        & F\left(\frac{n-1}{2} + ir, \frac{n-1}{2} - ir; \frac{n}{2}; 1 - \sigma\right)
    \end{aligned} 
\end{equation}
Thus, substituting (\ref{400}) (with $n$ replaced by $n+2$) into (\ref{Green.diff.1-form}), we find 
\begin{equation}
    \begin{aligned}
    R_1\left(\sigma; \frac{n-3}{2} - ir\right) - R_1\left(\sigma; \frac{n-3}{2} + ir\right) =  \frac{\sqrt{4\sigma(\sigma-1)} }{2\left(\left(\frac{n-1}{2}\right)^2+ r^2\right)  } \cdot \\
        \frac{\Gamma\left(\frac{n+1}{2} - ir\right) \Gamma\left(\frac{n+1}{2} + ir\right) \Gamma\left(\frac{1}{2} + ir\right) \Gamma\left(\frac{1}{2} - ir\right)}{2^{n+2} \pi^{\frac{n+2}{2}} (-2ir) \Gamma(-2ir) \Gamma(2ir) \Gamma\left(\frac{n+2}{2}\right)} \cdot F\left(\frac{n+1}{2} + ir, \frac{n+1}{2} - ir; \frac{n+2}{2}; 1 - \sigma\right) 
    \end{aligned}
\end{equation}
and using (\ref{6.56}) (with $n$ replaced by $n+2$) we get 
\begin{equation}
    \begin{aligned}
    R_1\left(\sigma; \frac{n-3}{2} - ir\right) - R_1\left(\sigma; \frac{n-3}{2} + ir\right) =  \frac{\sqrt{4\sigma(\sigma-1)} }{2\left(\left(\frac{n-1}{2}\right)^2+ r^2\right)  } \cdot \\
         \frac{i}{\pi^{\frac{n+2}{2}}r} 2^{n-1} \Gamma\left(\frac{n+2}{2}\right) \sigma_{\mathbb{H}^{n+2}}(r) \mathcal{C}^{\frac{n+1}{2}}_{-\frac{n+1}{2}-ir}(\cosh \rho)
    \end{aligned},
\end{equation}
which with $\sigma= \cosh^2 \frac{\rho}{2}$, takes the following form
\begin{equation}\label{6.56'}
    \begin{aligned}
    R_1\left(\cosh^2 \frac{\rho}{2}; \frac{n-3}{2} - ir\right) - R_1\left(\cosh^2 \frac{\rho}{2}; \frac{n-3}{2} + ir\right) =  \frac{\sinh \rho }{2\left(\left(\frac{n-1}{2}\right)^2+ r^2\right)  } \cdot \\
         \frac{i}{\pi^{\frac{n+2}{2}}r} 2^{n-1} \Gamma\left(\frac{n+2}{2}\right) \sigma_{\mathbb{H}^{n+2}}(r) \mathcal{C}^{\frac{n+1}{2}}_{-\frac{n+1}{2}-ir}(\cosh \rho)
    \end{aligned}.
\end{equation}

Now, let $E^{(1)}_\la, \la \in \mathbb{R}$, be the spectral projection for the self-adjoint Laplacian $\left(\vec{\Delta}^{(1)}_{\mathbb{H}^n}  \vert_{ A_i}\right)' $

\begin{equation*}
\left(\vec{\Delta}^{(1)}_{\mathbb{H}^n}  \vert_{ A_i}\right)' \eta = \int_0^\infty \la \, dE_\la^{(1)} \eta, \quad \eta \in L^2(\Lambda^1(\mathbb{H}^n)).
\end{equation*}
Using Stone's formula (see (A.15) in \cite{Awonusika}), we have

\begin{equation*}
dE_\la^{(1)} \; \eta (w) = \frac{1}{2\pi i} \left[\vec{R}^{(1)}_{\mathbb{H}^n} \vert_{ A_i}(\la+i0) - \vec{R}^{(1)}_{\mathbb{H}^n} \vert_{ A_i}(\la-i0)\right] \eta(w) \, d\la,
\end{equation*}
where $\vec{R}^{(1)}_{\mathbb{H}^n} \vert_{ A_i}$ is the resolvent operator  defined by
$$
\left(\vec{R}^{(1)}_{\mathbb{H}^n} \vert_{ A_i}(\la) \; \eta\right)(w)=\int_{\mathbb{H}^n} <\left(\vec{R}^{(1)}_{\mathbb{H}^n} \vert_{ A_i}\right)'(w, w',\la), \eta(w')>_{w'} \, d\mu_{\mathbb{H}^n}(w')
$$
with

\begin{equation*}
\left(\vec{R}^{(1)}_{\mathbb{H}^n} \vert_{ A_i}\right)'(w, w',\la) = 
\begin{cases} 
\vec{R}^{(1)}_{\mathbb{H}^n} \vert_{ A_i}(\sigma; \frac{n-3}{2} + i\sqrt{\la}), & \text{Im } \la < 0, \\
\vec{R}^{(1)}_{\mathbb{H}^n} \vert_{ A_i}(\sigma; \frac{n-3}{2} - i\sqrt{\la}), & \text{Im } \la > 0.
\end{cases} 
\end{equation*}
By the above, we have

\begin{equation*}
\begin{aligned}
(E_{\la}^{(1)} f)(w) =  \int_{\mathbb{H}^n}\frac{1}{2\pi i} \int_0^\la<  \left( \vec{R}^{(1)}_{\mathbb{H}^n} \vert_{ A_i}\right)'& (w, w',r + i0)  - \left(\vec{R}^{(1)}_{\mathbb{H}^n} \vert_{ A_i}\right)'(w, w',r - i0) ,\eta(w') >_{w'} \cdot \\  & dr\; \eta(w') \, d\mu_{\mathbb{H}^n}(w').
\end{aligned}
\end{equation*}
Let us denote its kernel by  
\begin{equation}\label{111}
 \begin{aligned} 
\vec{\pi}^{(1)}_{\la}\vert_{ A_i}(w, w') & := \frac{1}{2\pi i} \int_{0}^{\la} \left[ \left(\vec{R}^{(1)}_{\mathbb{H}^n} \vert_{ A_i}\right)'(w, w',r + i0)  - \left(\vec{R}^{(1)}_{\mathbb{H}^n} \vert_{ A_i}\right)'(w, w',r - i0) \right] dr \\
&=\left[\frac{1}{2\pi i} \int_{0}^{\la} \left[ R_1(\rho,r + i0)  - R_1(\rho,r - i0) \right] dr \right] \sinh \rho \;\theta^i,
\end{aligned}
\end{equation}
and its scalar one by $$
\pi_{1,\la}(\rho) :=\frac{1}{2\pi i} \int_{0}^{\la} \left[ R_1(\rho,r + i0)  - R_1(\rho,r - i0) \right] dr .
$$
Then, using (\ref{6.56'}), we compute
\begin{equation}\label{pirho 1-forms}
 \begin{aligned} 
\pi_{1,\la}(w, w')=\pi_{1,\la}(\rho) &= \frac{1}{2\pi i} \int_{0}^{\la} \left[ R_1\left(\sigma; \frac{n-3}{2} - i\sqrt{r}\right) - R_1\left(\sigma; \frac{n-3}{2} + i\sqrt{r}\right) \right] dr \\
&= \frac{1}{\pi i} \int_{0}^{\sqrt{\la}} r \left[ R_1\left(\sigma; \frac{n-3}{2} - ir\right) - R_1\left(\sigma; \frac{n-3}{2} + ir\right) \right] dr \\
&= \frac{\sinh \rho }{2\left(\left(\frac{n-1}{2}\right)^2+ r^2\right)  }\frac{2^{n-1} \Gamma\left(\frac{n+2}{2}\right)}{\pi^{\frac{n}{2}+2}} \int_{0}^{\sqrt{\la}} \sigma_{\mathbb{H}^{n+2}}(r) \mathcal{C}_{-\frac{n+1}{2}-ir}^{\frac{n+1}{2}}(\cosh \rho) \, dr
\end{aligned}.
\end{equation}
Now, due to the invariance of the $\vec{\Delta}^{(1)}_{\mathbb{H}^n}  \vert_{ A_i}$ under the isometry group, (see \cite{siasos2024p}), the kernel $\vec{K}^{(1)}_{\mathbb{H}^n} \vert_{ A_i}(w, w')$ in 
\begin{equation} \label{6.63'}
h\left( \sqrt{\left(\vec{\Delta}^{(1)}_{\mathbb{H}^n}  \vert_{ A_i}\right)'} \right) \eta(w) = \int_{\mathbb{H}^n} <\vec{K}^{(1)}_{\mathbb{H}^n} \vert_{ A_i}(w, w'), \eta(w')>_{w'} \, d\mu_{\mathbb{H}^n}(w')
\end{equation}
it depends only on $\rho$, with $\vec{K}^{(1)}_{\mathbb{H}^n} \vert_{ A_i}(w, w')= K_{1,\mathbb{H}^n} \vert_{ A_i} (\rho) \sinh \rho \; \theta^i$.  On the other hand, by the functional calculus, we have
\begin{equation}
\begin{aligned} \label{6.64'}
h\left( \sqrt{\left(\vec{\Delta}^{(1)}_{\mathbb{H}^n}  \vert_{ A_i}\right)'} \right) \eta(w)
&= \int_0^\infty h(\sqrt{\la}) \, dE^{(1)}_\la \eta(w) \\
&= \int_0^\infty h(\sqrt{\la}) \left[  \frac{1}{2\pi i}\int_0^\infty <\left[R_1\left(\sigma; \frac{n-3}{2} - ir\right)  \right.\right. \\
&\quad \left. \left. - R_1\left(\sigma; \frac{n-3}{2} + ir\right)\right]\sinh \rho \;\theta^i, \eta(\rho') >\, d\rho'\right] d\la \\
&= \int_0^\infty < \frac{1}{2\pi i} \int_0^\infty  h(\sqrt{\la})\left[  R_1\left(\sigma; \frac{n-3}{2} - ir\right)  \right. \\
&\quad \left. - R_1\left(\sigma; \frac{n-3}{2} + ir\right) \right] \sinh \rho \;\theta^i d\la,  \eta (\rho')> d\rho'
\end{aligned}.
\end{equation}
By comparing (\ref{6.64'}) with (\ref{6.63'}) and using (\ref{6.56'}) we have 

\begin{equation*}
\begin{aligned}
      K_{1,\mathbb{H}^n} \vert_{ A_i} (\rho)&=  \frac{1}{2\pi i} \int_0^\infty  h(\sqrt{\la})\left[  R_1\left(\sigma; \frac{n-3}{2} - ir\right)  
 - R_1\left(\sigma; \frac{n-3}{2} + ir\right) \right]d\la\\
 &=\frac{1}{\pi i} \int_0^\infty r h(r)\left[  R_1\left(\sigma; \frac{n-3}{2} - ir\right)  
 - R_1\left(\sigma; \frac{n-3}{2} + ir\right) \right]dr\\
 &=\frac{2^{n-2}\Gamma\left(\frac{n+2}{2}\right)}{\pi^{\frac{n}{2}+2}} \sinh \rho\int_0^\infty  \frac{h(r)}{\left(\frac{n-1}{2}\right)^2+ r^2}\sigma_{\mathbb{H}^{n+2}}(r) \mathcal{C}_{-\frac{n+1}{2}-ir}^{\frac{n+1}{2}}(\cosh \rho)dr
\end{aligned},
\end{equation*}
and this completes the proof of  the theorem.

Now, using  Theorem \ref{Thm6.7 (1-forms)}, we compute the heat kernel of $\vec{\Delta}^{(1)}_{\mathbb{H}^n}  \vert_{ A_i}$.
\begin{Corollary}\label{heatkern.1-forms}
The heat kernel $\vec{K}^{(1)}_{\mathbb{H}^n} \vert_{ A_i} (t,\rho)=K_{1,\mathbb{H}^n} \vert_{ A_i}  (t, \rho)  \sinh \rho \ \theta^i$ of  $\vec{\Delta}^{(1)}_{\mathbb{H}^n}  \vert_{ A_i}$ is given by 
 \begin{itemize}
     \item [(a)] $n$ odd, $n \geq 1$:
     \begin{equation*} 
K_{1,\mathbb{H}^n} \vert_{ A_i}  (t, \rho)  =
\frac{\sinh  \rho \; e^{-\frac{(n-3)^2}{4}t} }{(2\pi)^{\frac{n+2}{2}}2^\frac{3}{2}\sqrt{t}}  
\left(-\frac{1}{\sinh\rho}\frac{\partial}{\partial\rho}\right)^{\frac{n+1}{2}}e^{-\frac{\rho^2}{4t}},
\end{equation*}
\item [(b)] $n$ even, $n \geq 2$:
$$
K_{1,\mathbb{H}^n} \vert_{ A_i}  (t, \rho)  
= \frac{\sinh \rho\; e^{-\frac{(n-3)^2}{4}t}}{8\sqrt{2}\pi^{\frac{3}{2}}t^{\frac{3}{2}}} \left(-\frac{1}{2\pi \sinh \rho} \frac{\partial}{\partial \rho}\right)^{\frac{n}{2}} \int_{\rho}^{\infty} \frac{u e^{-\frac{u^2}{4t}} \, du}{\sqrt{\cosh u - \cosh \rho}}.
$$
 \end{itemize}
Moreover, it satisfies the following Millson-type recurrence formula 
\begin{equation}\label{Millson.1-form}
     K_{1,\mathbb{H}^n} \vert_{ A_i}  (t, \rho)  = -\frac{e^{-(n-4)t}}{2\pi} \frac{\partial}{\partial \rho} \left[ \frac{K_{1, \mathbb{H}^{n-2}} \vert_{ A_i} (t, \rho)}{\sinh \rho} \right].
\end{equation}
\end{Corollary}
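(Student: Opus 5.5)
The plan is to specialize Theorem \ref{Thm6.7 (1-forms)} to the heat multiplier
\[
h_t(r)=e^{-r^2t}e^{-\frac{(n-3)^2}{4}t},
\]
exactly as was done for functions at the end of the proof of Theorem \ref{Thm 6.7 (0-forms)}. The guiding observation is that the integrand in (\ref{6.54'}) is, up to the factor $\sinh\rho$ and the weight $\big((\frac{n-1}{2})^2+r^2\big)^{-1}$, the $\mathbb{H}^{n+2}$ spherical transform integrand. By (\ref{6.59}) we have
\[
\mathcal{D}_{n+2}(r)\Big/\Big(\big(\tfrac{n-1}{2}\big)^2+r^2\Big)=\mathcal{D}_n(r),
\]
so the weight simply lowers the Plancherel density back to that of $\mathbb{H}^n$, while the Legendre function stays at level $n+2$. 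Writing $\mathcal{C}^{\frac{n+1}{2}}_{-\frac{n+1}{2}-ir}$ via (\ref{mathcalC}) with $n$ replaced by $n+2$, and $\sigma_{\mathbb{H}^{n+2}}$ via $\mathcal{D}_{n+2}$, I will put (\ref{6.54'}) into the form
\[
K_{1,\mathbb{H}^n}\vert_{A_i}(t,\rho)=c\,\sinh\rho\;e^{-\frac{(n-3)^2}{4}t}\int_0^\infty e^{-r^2t}\,\sinh^{-\frac n2}\rho\;\mathcal{D}_{n+2}(r)\,\frac{P^{-\frac n2}_{-\frac12+ir}(\cosh\rho)}{(\frac{n-1}{2})^2+r^2}\,dr,
\]
with an explicit constant $c$. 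Since the factor $\big((\frac{n-1}{2})^2+r^2\big)^{-1}$ is awkward inside the Strichartz identities, I will handle it as follows.

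\emph{Odd $n$.} Apply (\ref{strich id.}) at dimension $n+2$, which gives $\big(-\frac{1}{2\pi\sinh\rho}\partial_\rho\big)^{\frac{n+1}{2}}\cos(\rho r)$ times $\mathcal{D}_{n+2}/\mathcal{D}_{n+2}$. The weight then remains as $1/\big((\frac{n-1}{2})^2+r^2\big)$ against $e^{-r^2t}$. The decisive trick is to choose the multiplier differently: the statement's prefactor is $e^{-\frac{(n-3)^2}{4}t}$ paired with the $(n+2)$-dimensional operator $\big(-\frac1{\sinh\rho}\partial_\rho\big)^{\frac{n+1}{2}}$ and $\sinh\rho$, with no leftover weight. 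I therefore expect the intended route to use the unnormalized form of (\ref{6.54'}), in which the weight is absorbed using $\mathcal{D}_{n+2}=\big((\frac{n-1}{2})^2+r^2\big)\mathcal{D}_n$ and the identity (\ref{strich id.}) is applied with $\mathcal{D}_{n+2}$ directly. Concretely, I will rewrite the integrand as $\mathcal{D}_{n+2}(r)\sinh^{-n/2}\rho\,P^{-n/2}$, up to the scalar normalizations coming from the different Gamma prefactors. Then
\[
\int_0^\infty e^{-r^2t}\cos(\rho r)\,dr=\frac{\sqrt\pi}{2\sqrt t}\,e^{-\rho^2/4t}
\]
yields (a). The only remaining work is to collect the constants $2^{n-2}\Gamma(\frac{n+2}{2})\pi^{-\frac n2-2}$ and $(2\pi)^{\frac{n+2}{2}}/\pi$, and to check that they combine to $(2\pi)^{-\frac{n+2}{2}}2^{-3/2}t^{-1/2}$.

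\emph{Even $n$.} Use (\ref{strich id 2}) and (\ref{6.71}) at dimension $n+2$, giving $\big(-\frac1{2\pi\sinh\rho}\partial_\rho\big)^{\frac n2}\int_\rho^\infty \frac{r\sin(ru)\,du}{\sqrt{\cosh u-\cosh\rho}}$. Exchanging the order of integration and using
\[
\int_0^\infty re^{-r^2t}\sin(ur)\,dr=\frac{u\sqrt\pi}{4t^{3/2}}\,e^{-u^2/4t}
\]
gives (b). Note that, unlike the scalar case, no extra $e^{-t/4}$ appears, because the exponent $(n-3)^2/4$ is already in the multiplier.

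\emph{Millson formula.} With $K_{1,\mathbb{H}^n}\vert_{A_i}/\sinh\rho$ denoted $k_n$, formulas (a) and (b) say that $k_n$ equals $e^{-\frac{(n-3)^2}{4}t}$ times $\big(-\frac1{2\pi\sinh\rho}\partial_\rho\big)$ applied to the same expression one level lower, up to constants. One therefore computes
\[
-\frac1{2\pi}\partial_\rho k_{n-2}=\sinh\rho\cdot\Big(-\frac{1}{2\pi\sinh\rho}\partial_\rho\Big)k_{n-2},
\]
and the exponent ratio is
\[
e^{-\frac{(n-3)^2}{4}t}\Big/e^{-\frac{(n-5)^2}{4}t}=e^{-(n-4)t}.
\]
The odd-case constant $(2\pi)^{-\frac{n+2}{2}}$ versus the operator $\big(-\frac1{\sinh\rho}\partial_\rho\big)$ accounts for the $2\pi$. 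This gives (\ref{Millson.1-form}); alternatively it follows by differentiating (\ref{6.54'}) in $\rho$, as in the proof of (\ref{6.55}).

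The main obstacle is the bookkeeping of the weight $\big((\frac{n-1}{2})^2+r^2\big)^{-1}$ against $\mathcal{D}_{n+2}$ and the resulting constants. I will first verify (a) for $n=1$ and (b) for $n=2$ directly from (\ref{6.54'}), and then propagate to all $n$ by the Millson recursion. This reduces the risk to a single base-case constant in each parity.
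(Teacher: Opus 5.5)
\textbf{The gap: the weight in (\ref{6.54'}) cannot be absorbed into constants.}
Your argument fails where you ``rewrite the integrand as $\mathcal{D}_{n+2}(r)\sinh^{-\frac n2}\rho\,P^{-\frac n2}_{-\frac12+ir}(\cosh\rho)$, up to scalar normalizations''. With your multiplier $h_t(r)=e^{-r^2t}e^{-\frac{(n-3)^2}{4}t}$, formula (\ref{6.54'}) contains $\mathcal{D}_{n+2}(r)/\big((\frac{n-1}{2})^2+r^2\big)=\mathcal{D}_n(r)$. By (\ref{6.59}), its discrepancy with $\mathcal{D}_{n+2}(r)$ is the $r$-dependent factor $(\frac{n-1}{2})^2+r^2$, not a constant. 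The prefactor of (\ref{6.54'}) differs from that of (\ref{6.54}) at level $n+2$ only by $\frac12$, so it cannot absorb anything depending on $r$.

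Consequently, for odd $n\ge 3$, after applying (\ref{strich id.}) at level $n+2$ the operator $\big(-\frac{1}{\sinh\rho}\partial_\rho\big)^{\frac{n+1}{2}}$ acts on
\[
\int_0^\infty \frac{e^{-r^2t}\cos(\rho r)}{(\frac{n-1}{2})^2+r^2}\,dr .
\]
This is a combination of $\operatorname{erfc}$ terms, not a Gaussian. In the even case you likewise get $\int_0^\infty \frac{r e^{-r^2t}\sin(ur)}{(\frac{n-1}{2})^2+r^2}\,dr$ instead of $\frac{u\sqrt\pi}{4t^{3/2}}e^{-u^2/4t}$. So neither (a) nor (b) follows.

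Your base-case check would expose this immediately. For $n=1$ you have $\sigma_{\mathbb{H}^3}(r)=r^2$ and $\mathcal{C}^{1}_{-1-ir}(\cosh\rho)=\frac{\sin r\rho}{r\sinh\rho}$. Then (\ref{6.54'}) with your $h_t$ gives a multiple of $e^{-t}\int_0^\infty e^{-r^2t}\frac{\sin r\rho}{r}\,dr=\frac{\pi}{2}e^{-t}\operatorname{erf}\big(\frac{\rho}{2\sqrt t}\big)$. By contrast, (a) at $n=1$ is a multiple of $\rho\,t^{-\frac32}e^{-t-\frac{\rho^2}{4t}}$.

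The same weight also blocks your alternative route to (\ref{Millson.1-form}). Differentiating (\ref{6.54'}) and using (\ref{6.55}) would require the weights $\big((\frac{n-1}{2})^2+r^2\big)^{-1}$ and $\big((\frac{n-3}{2})^2+r^2\big)^{-1}$ at levels $n$ and $n-2$ to coincide, and they do not.

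\textbf{What the paper does instead.}
The missing idea is the paper's choice of multiplier. It takes $h(r)=\big((\frac{n-1}{2})^2+r^2\big)e^{-r^2t}e^{-\frac{(n-3)^2}{4}t}$ in (\ref{6.54'}), so the weight cancels identically. Comparing with (\ref{6.68}) at level $n+2$ then gives
\[
K_{1,\mathbb{H}^n}\vert_{A_i}(t,\rho)=\tfrac12\sinh\rho\,e^{2(n-1)t}K_{\mathbb{H}^{n+2}}(t,\rho).
\]
From this identity:
\begin{itemize}
\item (a) and (b) are exactly (\ref{strich id.}) and (\ref{strich id 2}) at level $n+2$, combined with the Gaussian integrals you quote;
\item (\ref{Millson.1-form}) is the scalar Millson formula (\ref{6.69}) at level $n+2$, transported through the identity.
\end{itemize}
Note that this $h$ is not literally $e^{-t(\frac{(n-3)^2}{4}+r^2)}$. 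The factor $(\frac{n-1}{2})^2+r^2$ is built in precisely to cancel the weight. It is this choice, not any bookkeeping of constants starting from your $h_t$, that produces the stated closed forms.

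\textbf{On your Millson check.}
Your direct verification of the Millson formula from (a) and (b) is correct as a consistency check: the constants have ratio $\frac{1}{2\pi}$ and the exponentials have ratio $e^{-(n-4)t}$. However, using that recursion to propagate (a) and (b) from the base cases would be circular.
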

\begin{proof}
We take $h(r) =\left(\left(\frac{n-1}{2}\right)^2+ r^2\right)e^{-r^2t}e^{-\frac{(n-3)^2}{4}t}$ in  (\ref{6.54'}) and find with the help of (\ref{6.68}) (with $n$ replaced by $n+2$) that
\begin{equation}\label{6.68'}
 K_{1,\mathbb{H}^n} \vert_{ A_i} (t,\rho)
=
\frac{1}{2(2\pi)^{\frac{n+2}{2}}} \sinh \rho e^{-\frac{(n-3)^2}{4}t}
(\sinh\rho)^{-\frac{n}{2}}
\int_0^\infty e^{-r^2 t}\mathcal{D}_{n+2}(r)
P^{-\frac{n}{2}}_{-\frac12+ir}(\cosh\rho)\,dr.
\end{equation}
Now we consider two cases:
\textbf{Case (a)} $n$ is odd, $n\geq 1$.

First, we write (\ref{strich id.}) (with $n$ replaced by $n+2$) 
\begin{equation*}
\sinh^{-\frac{n}{2}}\rho \,\mathcal{D}_{n+2}(r)
P_{-\frac12+ir}^{-\frac{n}{2}}(\cosh\rho)
=
\frac{(2\pi)^{\frac{n+2}{2}}}{\pi (2\pi)^{\frac{n+1}{2}}}
\left(-\frac{1}{\sinh\rho}\frac{\partial}{\partial\rho}\right)^{\frac{n+1}{2}}
\cos(\rho r)
\end{equation*}
and then we substitute it in (\ref{6.68'}) and find
\begin{equation*}\
 K_{1,\mathbb{H}^n} \vert_{ A_i} (t,\rho)
=
\frac{1}{(2\pi)^{\frac{n+3}{2}}} \sinh  \rho \ e^{-\frac{(n-3)^2}{4}t}
\left(-\frac{1}{\sinh\rho}\frac{\partial}{\partial\rho}\right)^{\frac{n+1}{2}}\int_0^\infty e^{-r^2 t} \cos(\rho r) \
dr.
\end{equation*}
Using the Gaussian integral
\[
\int_0^\infty e^{-r^2 t}\cos(\rho r)\,dr
=
\frac{\sqrt{\pi}}{2\sqrt{t}}e^{-\frac{\rho^2}{4t}},
\]
we finally have
   \begin{equation*}\
\frac{\sinh  \rho \; e^{-\frac{(n-3)^2}{4}t} }{(2\pi)^{\frac{n+2}{2}}2^\frac{3}{2}\sqrt{t}}  
\left(-\frac{1}{\sinh\rho}\frac{\partial}{\partial\rho}\right)^{\frac{n+1}{2}}e^{-\frac{\rho^2}{4t}}.
\end{equation*}

\textbf{Case (b)} $n$ is even, $n\geq 2$.

Similarly to Case (a), first write (\ref{strich id 2}) (with $n$ replaced by $n+2$) 
    \begin{equation*}
        \sinh^{-\frac{n}{2}} \rho \mathcal{D}_{n+2}(r) P^{-\frac{n}{2}}_{-\frac{1}{2}+ir}(\cosh \rho) = r \tanh \pi r \frac{(2\pi)^{\frac{n+2}{2}}}{2\pi} \left(-\frac{1}{2\pi \sinh \rho} \frac{\partial}{\partial \rho}\right)^{\frac{n}{2}} P_{-\frac{1}{2}+ir}(\cosh \rho).
    \end{equation*}
By replacing the Legendre function with its integral representation, we have
    \begin{equation} \label{6.71'}
        \sinh^{-\frac{n}{2}} \rho \mathcal{D}_{n+2}(r) P^{-\frac{n}{2}}_{-\frac{1}{2}+ir}(\cosh \rho) = \frac{(2\pi)^{\frac{n+2}{2}}\sqrt{2}}{2\pi^2} \left(-\frac{1}{2\pi \sinh \rho} \frac{\partial}{\partial \rho}\right)^{\frac{n}{2}} \int_{\rho}^{\infty} \frac{r \sin r u \, du}{\sqrt{\cosh u - \cosh \rho}}.
    \end{equation}
Now using (\ref{6.71'}) and 
    $$
    \int_{0}^{\infty} \beta e^{-b^{2}\beta^{2}} \sin a\beta \, d\beta = \frac{a\sqrt{\pi}}{4b^{3}} e^{-\frac{a^{2}}{4b^{2}}}
    $$
in (\ref{6.68'}) we obtain the following
    \begin{equation}
        K_{1,\mathbb{H}^n} \vert_{ A_i} (t,\rho)= \frac{\sinh \rho\; e^{-\frac{(n-3)^2}{4}t}}{8\sqrt{2}\pi^{\frac{3}{2}}t^{\frac{3}{2}}} \left(-\frac{1}{2\pi \sinh \rho} \frac{\partial}{\partial \rho}\right)^{\frac{n}{2}} \int_{\rho}^{\infty} \frac{u e^{-\frac{u^2}{4t}} \, du}{\sqrt{\cosh u - \cosh \rho}}.
    \end{equation}
For the Millson formula, we express first $  K_{1,\mathbb{H}^n} \vert_{ A_i} (t,\rho)$ in terms of $ K_{\mathbb{H}^{n+2}}(t,\rho)$. This follows, by comparing (\ref{6.68}) (with $n$ replaced by $n+2$) with (\ref{6.68'})
\begin{equation}\label{(4)}
    K_{1, \mathbb{H}^n}(t, \rho) = \frac{1}{2}\sinh \rho\; e^{2(n-1)t}  K_{ \mathbb{H}^{n+2}}(t, \rho).
\end{equation}
Next, by comparing (\ref{(4)})  (with $n$ replaced by $n-2$) with (\ref{6.69}) (with $n$ replaced by $n+2$), we obtain the Millson formula (\ref{Millson.1-form}) for $  K_{1,\mathbb{H}^n} \vert_{ A_i} (t,\rho)$.
\end{proof}

Before presenting our main estimate, we state the Davies-Mandouvalos estimates.
\begin{theorem}[Theorem 3.1 in \cite{davies1988heat}]
If $n \geqslant 1$ is any integer, then
\begin{equation} \label{D-V}
    K_{\mathbb{H}^{n+1}}(t, \rho) \sim t^{-\frac{1}{2}(n+1)} e^{-\frac{1}{2}n^2 t - \rho^2 / 4t - \frac{1}{2}n\rho} \cdot (1 + \rho + t)^{\frac{1}{2}n - 1}(1 + \rho),
\end{equation}
uniformly for $0 \leqslant \rho < \infty$ and $0 < t < \infty$. Also, for each fixed $\rho \geqslant 0$ and $n \geqslant 1$,
\begin{equation}\label{D-V1}
    K_{\mathbb{H}^{n+1}}(t, \rho) \sim t^{-\frac{3}{2}} e^{-\frac{1}{2}n^2 t} \quad \text{as } t \to \infty.
\end{equation}
\end{theorem}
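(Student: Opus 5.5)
The plan is to derive the two-sided bound directly from the explicit formulas (\ref{6.40}) and (\ref{6.41}), and to pass between dimensions with the Millson recurrence (\ref{6.69}). Throughout, $f\sim g$ means $c^{-1}g\le f\le cg$ with $c$ depending only on $n$. The key structural fact to prove is positivity. Set $L:=-\frac{1}{\sinh\rho}\partial_\rho$. I will show by induction on $k$ that
\begin{equation*}
L^k e^{-\rho^2/4t}=e^{-\rho^2/4t}\sum_{j=1}^{k} t^{-j}\,a_{k,j}(\rho),
\end{equation*}
where every $a_{k,j}\ge 0$. The induction rests on the fact that $\rho/\sinh\rho$ and the functions $L^m(\rho/\sinh\rho)$ have a fixed sign. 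Because of this positivity, there is no cancellation, and the size of $L^k e^{-\rho^2/4t}$ is that of its largest term.

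\textbf{Odd total dimension} ($n+1$ odd, $k=n/2$). I estimate each coefficient $a_{k,j}$ separately in two regimes. For $\rho\le 1$ one has $a_{k,j}\sim 1$ when $j=k$, and the lower-order terms are bounded. For $\rho\ge1$ one has $a_{k,j}\sim \rho^{j}e^{-k\rho}$, up to a factor $2^k$. Summing over $j$ then gives
\begin{equation*}
t^{-1/2}\sum_{j=1}^{k} t^{-j}\rho^{j}e^{-k\rho}\sim t^{-\frac{n+1}{2}}e^{-\frac{n}{2}\rho}\,(1+\rho)(1+\rho+t)^{\frac n2-1}.
\end{equation*}
The factor $(1+\rho+t)^{k-1}(1+\rho)$ is exactly the combinatorial count $\sum_{j} t^{k-j}\rho^{j}$, normalised by $t^{k}$. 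Multiplying by the prefactor $e^{-\frac{n^2}{4}t}/\sqrt{t}$ in (\ref{6.40}) gives (\ref{D-V}) in this case. The large-time statement (\ref{D-V1}) follows for fixed $\rho$ by keeping only the term $j=1$, which dominates as $t\to\infty$ and contributes $t^{-3/2}$.

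\textbf{Even total dimension.} I first prove the case $n+1=2$ directly from the integral formula for $K_{\mathbb{H}^2}$. I substitute $u=\rho+v$ and use
\begin{equation*}
\cosh u-\cosh\rho\sim v(v+1)\,\sinh\rho\; e^{v}\ \text{ or }\ v(\rho+v)\ \text{(for small }\rho+v),
\end{equation*}
and then estimate
\begin{equation*}
\int_0^\infty (\rho+v)\,e^{-(\rho+v)^2/4t}\,\bigl(\cosh(\rho+v)-\cosh\rho\bigr)^{-1/2}\,dv .
\end{equation*}
This splits into the region $v\lesssim \min(1,t/(1+\rho))$, where the Gaussian is essentially frozen, and its complement. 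The result is $t^{-1}e^{-\rho^2/4t-\rho/2}(1+\rho)(1+\rho+t)^{-1/2}$, up to the prefactor. Higher even dimensions then follow by applying $L$ repeatedly, written as $L=-\frac{1}{\sinh\rho}\partial_\rho$ under the integral after differentiating the representation. The same positivity argument as in the odd case applies: $-\partial_\rho$ of the integral is again a positive integral of the same type, so each application of $L$ contributes a factor $\sim t^{-1}(1+\rho+t)e^{-\rho}$, up to the correct exponential shift. Equivalently, I can run the induction through (\ref{6.69}).

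\textbf{Main obstacle.} I expect the hardest step to be the uniform two-sided bound for the $\mathbb{H}^2$ integral and its $L$-derivatives in the transition regime $\rho\approx t$. In that regime the endpoint singularity and the Gaussian decay compete. My first attempt will be to Taylor-bound $\cosh(\rho+v)-\cosh\rho$ above and below by $v\sinh\rho+v^2\cosh\rho/2$ and then compare with explicit Gamma-type integrals.
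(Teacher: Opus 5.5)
Your odd-dimensional argument is correct and is the standard one. To close it, use
$\frac{\rho}{\sinh\rho}=\int_0^\infty\frac{dv}{\cosh\rho+\cosh v}$. This gives $L^m\bigl(\frac{\rho}{\sinh\rho}\bigr)=m!\int_0^\infty(\cosh\rho+\cosh v)^{-m-1}\,dv$, and hence at once the sign and the size $\sim(1+\rho)e^{-(m+1)\rho}$ you need for $a_{k,j}\sim(1+\rho)^je^{-k\rho}$.

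For $\rho\le 1\le t$ the lower bound comes from $a_{k,1}=\frac12L^{k-1}(\rho/\sinh\rho)\gtrsim 1$, not from $a_{k,k}$, so "lower-order terms bounded" is not enough there.

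Your computation produces $e^{-n^2t/4}$, which is the correct exponent. The $\frac12 n^2t$ in the displayed statement is a misprint; test it on $K_{\mathbb{H}^3}=(4\pi t)^{-3/2}\frac{\rho}{\sinh\rho}e^{-t-\rho^2/4t}$. The paper gives no proof of this statement; it quotes \cite{davies1988heat}.

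\textbf{The gap: higher even dimensions.} The sentence ``each application of $L$ contributes a factor $\sim t^{-1}(1+\rho+t)e^{-\rho}$'' is the conclusion, not an argument.
Two-sided bounds are not preserved by $\partial_\rho$. An induction through \eqref{6.69} that carries only the estimate for the lower-dimensional kernel says nothing about its derivative. Positivity excludes cancellation but gives no size.
Nor can you differentiate $\int_\rho^\infty u\,e^{-u^2/4t}(\cosh u-\cosh\rho)^{-1/2}du$ under the integral sign. The boundary term is infinite, and $\partial_\rho(\cosh u-\cosh\rho)^{-1/2}$ is not integrable at $u=\rho$.

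\textbf{The missing step.} Substitute $x=\cosh\rho$ and $\cosh u=x+s$. Then $L=-\partial_x$, and the integral becomes $\int_0^\infty F(x+s)\,s^{-1/2}ds$ with $F=\frac{u}{\sinh u}e^{-u^2/4t}=2t\,Le^{-u^2/4t}$. Hence
\[
L^k\int_\rho^\infty\frac{u\,e^{-u^2/4t}\,du}{\sqrt{\cosh u-\cosh\rho}}=2t\int_\rho^\infty\frac{\sinh u\,\bigl(L^{k+1}e^{-u^2/4t}\bigr)(u)}{\sqrt{\cosh u-\cosh\rho}}\,du .
\]
So for even $N$, $K_{\mathbb{H}^N}(t,\rho)$ is a constant times $e^{(2N-1)t/4}\int_\rho^\infty K_{\mathbb{H}^{N+1}}(t,u)\sinh u\,(\cosh u-\cosh\rho)^{-1/2}du$. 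This is how \cite{davies1988heat} reach even dimensions.

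Now insert your odd-dimensional bound $L^{k+1}e^{-u^2/4t}\sim e^{-u^2/4t-(k+1)u}\sum_{j=1}^{k+1}t^{-j}(1+u)^j$ and do a single estimate of $\mathbb{H}^2$ type; no iteration is needed.
\begin{itemize}
\item Write $u=\rho+v$ and use $\cosh(\rho+v)-\cosh\rho=2\sinh(\rho+\tfrac v2)\sinh\tfrac v2\sim\frac{v(\rho+v)}{(1+v)(1+\rho+v)}e^{\rho+v}$. Your $v(v+1)\sinh\rho\,e^{v}$ is too large by a factor $v^2$ for large $v$.
\item The mass sits in $v\lesssim\min(1,t/\rho,\sqrt t)$, not $\min(1,t/(1+\rho))$. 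For $\rho\le\sqrt t\le 1$ it lies at $v\sim\sqrt t$.
\item On that range the weight $e^{-(k+1)u}\sum_j t^{-j}(1+u)^j$ is comparable to its value at $u=\rho$. For every $v$ it is at most $(1+v)^{k+1}e^{-(k+1)v}$ times that value.
\end{itemize}
This yields exactly the factor $\bigl[t^{-1}(1+\rho+t)e^{-\rho}\bigr]^k$ over the $\mathbb{H}^2$ case, which is what you asserted without proof.
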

As a consequence of Corollary \ref{heatkern.1-forms} and the above theorem, we prove the following estimates for $\vec{\Delta}^{(1)}_{\mathbb{H}^n}  \vert_{ A_i}$.
\begin{Corollary} \label{D-V EST. 1-FORMS}
If $n \geqslant 1$ is any integer, then
\begin{equation*}
K_{1,\mathbb{H}^n}\vert_{ A_i} (t,\rho)  \sim t^{-\frac{n+2}{2}} \rho \, (1+\rho+t)^{\frac{n-1}{2}}   \cdot e^{ -\frac{(n-3)^2}{4}t - \frac{\rho^2}{4t} - \frac{n-1}{2}\rho },
\end{equation*}
uniformly for $0 \leqslant \rho < \infty$ and $0 < t < \infty$, also for each fixed $\rho > 0$ and $n \geqslant 1$,$$K_{1,\mathbb{H}^n} \vert_{ A_i} (t,\rho) \sim t^{-\frac{3}{2}} e^{-\frac{(n-3)^2}{4} t} \quad \text{as } t \to \infty.$$
\end{Corollary}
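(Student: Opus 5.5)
The plan is to reduce everything to the scalar Davies--Mandouvalos estimate through the identity (\ref{(4)}) in the proof of Corollary \ref{heatkern.1-forms}. That identity writes $K_{1,\mathbb{H}^n}\vert_{A_i}(t,\rho)$ as $\tfrac12\sinh\rho$ times an exponential factor in $t$ times $K_{\mathbb{H}^{n+2}}(t,\rho)$. Before using it, I would recompute the exponential factor directly from (\ref{6.68}) and (\ref{6.68'}). Formula (\ref{6.68}) with $n$ replaced by $n+2$ carries $e^{-\frac{(n+1)^2}{4}t}$ in front of the same integral $\int_0^\infty e^{-r^2t}\mathcal{D}_{n+2}(r)P^{-n/2}_{-1/2+ir}(\cosh\rho)\,dr$. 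Formula (\ref{6.68'}) carries $e^{-\frac{(n-3)^2}{4}t}$ and an extra factor $\tfrac12\sinh\rho$. Hence
\begin{equation*}
K_{1,\mathbb{H}^n}\vert_{A_i}(t,\rho)=\tfrac12\sinh\rho\; e^{\frac{(n+1)^2-(n-3)^2}{4}t}K_{\mathbb{H}^{n+2}}(t,\rho)=\tfrac12\sinh\rho\; e^{2(n-1)t}K_{\mathbb{H}^{n+2}}(t,\rho).
\end{equation*}
This agrees with (\ref{(4)}), so the reduction is exact and involves no error terms.

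Next I apply (\ref{D-V}) with $n$ replaced by $n+1$, so that it describes $K_{\mathbb{H}^{n+2}}$:
\begin{equation*}
K_{\mathbb{H}^{n+2}}(t,\rho)\sim t^{-\frac{n+2}{2}}e^{-\frac{(n+1)^2}{4}t-\frac{\rho^2}{4t}-\frac{n+1}{2}\rho}(1+\rho+t)^{\frac{n-1}{2}}(1+\rho).
\end{equation*}
Multiplying by $e^{2(n-1)t}$ turns the time exponent into $-\frac{(n-3)^2}{4}t$. It remains to handle the factor $\tfrac12\sinh\rho\,(1+\rho)e^{-\frac{n+1}{2}\rho}$. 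I claim the uniform equivalence
\begin{equation*}
\sinh\rho\,(1+\rho)\,e^{-\rho}\sim\rho,\qquad 0\le\rho<\infty.
\end{equation*}
This follows from $\sinh\rho\sim\rho e^{\rho}/(1+\rho)$. To check that, split into $\rho\le1$ and $\rho\ge1$: for $\rho\le1$ both sides are comparable to $\rho$, and for $\rho\ge1$ both are comparable to $e^\rho$. Writing $e^{-\frac{n+1}{2}\rho}=e^{-\frac{n-1}{2}\rho}e^{-\rho}$, these pieces combine to give exactly $t^{-\frac{n+2}{2}}\rho(1+\rho+t)^{\frac{n-1}{2}}e^{-\frac{(n-3)^2}{4}t-\frac{\rho^2}{4t}-\frac{n-1}{2}\rho}$, uniformly in both variables.

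For the large-time statement, fix $\rho>0$ and apply (\ref{D-V1}) with $n+1$ in place of $n$, which gives $K_{\mathbb{H}^{n+2}}(t,\rho)\sim t^{-3/2}e^{-\frac{(n+1)^2}{4}t}$ as $t\to\infty$. Multiplying by $\tfrac12\sinh\rho\,e^{2(n-1)t}$ gives $K_{1,\mathbb{H}^n}\vert_{A_i}(t,\rho)\sim t^{-3/2}e^{-\frac{(n-3)^2}{4}t}$. The constant $\tfrac12\sinh\rho$ is positive because $\rho>0$, and this is precisely why $\rho=0$ must be excluded here.

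The main obstacle is checking that the exponential bookkeeping in (\ref{(4)}) is consistent, since both the uniform and the asymptotic estimates rest entirely on it. The check is the direct comparison of the prefactors of (\ref{6.68}) and (\ref{6.68'}) carried out above. Beyond that, the only care needed is the uniformity of the $\sinh\rho$ comparison, which the two-region split handles, and the edge case $n=1$. For $n=1$, (\ref{D-V}) is invoked with index $2\ge1$, so it is still valid.
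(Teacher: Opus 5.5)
Your proposal is correct and is essentially the paper's proof: you combine (\ref{(4)}) with the Davies--Mandouvalos bound for $\mathbb{H}^{n+2}$ and absorb the factor $\sinh\rho\,(1+\rho)e^{-\rho}\sim\rho$, which the paper obtains via $\sinh\rho\,e^{-\rho}=\tfrac12(1-e^{-2\rho})$ and $1-e^{-\rho}\sim\rho/(1+\rho)$. The differences are minor: you take the large-time statement directly from (\ref{D-V1}) rather than fixing $\rho$ in the uniform estimate, and you use the correct shift $n\to n+1$ with exponent $\tfrac{(n+1)^2}{4}t$, whereas the paper's text says $n\to n+2$ and states (\ref{D-V}) with $\tfrac12 n^2 t$ where $\tfrac14 n^2 t$ is meant.
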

\begin{proof}
By (\ref{(4)}) and (\ref{D-V}) (with $n$ replaced by $n+2$) we get
\begin{equation*}
 K_{1,\mathbb{H}^n} \vert_{ A_i} (t,\rho) \sim t^{-\frac{n+2}{2}} \sinh\rho \, (1+\rho+t)^{\frac{n-1}{2}} (1+\rho) e^{ -\frac{(n-3)^2}{4}t - \frac{\rho^2}{4t} - \frac{n+1}{2}\rho }.
\end{equation*}
Now, the above equation with
\[
\sinh \rho \, e^{-\frac{n+1}{2} \rho} = \frac{1}{2} \left( 1 - e^{-2\rho} \right) e^{-\frac{n-1}{2} \rho}
\]
and
$$1 - e^{-\rho} \sim \frac{\rho}{1+\rho},$$
gives the first estimate. For the second, we fix $\rho > 0$. As $t \to \infty$,
\[
(1 + \rho + t)^{\frac{n-1}{2}} \sim t^{\frac{n-1}{2}},
\]
while
\[
e^{-\rho^2/(4t)} \to 1.
\]
Hence, the power of $t$ becomes
\[
t^{-\frac{n+2}{2}} t^{\frac{n-1}{2}} = t^{-\frac{3}{2}}.
\]
    
    \end{proof}

\section{Appendix}

\subsection{Supersymmetric framework}
The following definitions and results are adapted from the appendix in \cite{Supersymmetry}.

\noindent \textbf{Hypothesis A.1.} \textit{Let $\mathcal{H}_j, j = 1, 2$, be separable complex Hilbert spaces and}
\begin{equation*}
D : \mathcal{H}_1 \supseteq \text{dom}(D) \to \mathcal{H}_2 
\end{equation*}
\textit{be a densely defined, closed, linear operator.}

Let us introduce the associated operators: 
\begin{equation*}
 H_1 = D^*D \text{ in } \mathcal{H}_1, \quad H_2 = DD^* \text{ in } \mathcal{H}_2.
\end{equation*}
As established below, the spectral properties of any single operator in the triple $D, H_1, H_2$ essentially determine the corresponding properties of the remaining pair. Specifically: 
\begin{theorem} \label{App. Thm}
 Assume Hypothesis A.1, then the following hold
\begin{itemize}
    \item[(a)] $  \ker(D) = \ker(H_1) = (\text{ran}(D^*))^\perp, \quad \ker(D^*) = \ker(H_2) = (\text{ran}(D))^\perp. $
\item[(b)]  $H_1$ and $H_2$ are \textit{essentially isospectral, that is,}
\begin{equation*}
\sigma(H_1) \setminus \{0\} = \sigma(H_2) \setminus \{0\}, 
\end{equation*}
\textit{in fact,}
\begin{equation*}
D^*D [I_{\mathcal{H}1} - P_{\text{ker}(D)}] \text{ is unitarily equivalent to } DD^* [I_{\mathcal{H}2} - P_{\text{ker}(D^*)}].
\end{equation*}
\item[(c)] For $z \in \rho(H_1) \cap \rho(H_2)$,
\begin{align*}
I_{\mathcal{H}_2} + z(H_2 - zI_{\mathcal{H}_2})^{-1} &\supseteq D(H_1 - zI_{\mathcal{H}_1})^{-1} D^*, \\
I_{\mathcal{H}1} + z(H_1 - zI{\mathcal{H}_1})^{-1} &\supseteq D^*(H_2 - zI_{\mathcal{H}_2})^{-1} D.
\end{align*}
\end{itemize}
\end{theorem}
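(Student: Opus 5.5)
The plan is to derive all three parts from two standard facts about a closed, densely defined operator $D$. The first is von Neumann's theorem: $D^*$ is closed and densely defined, $D^{**}=D$, and $H_1=D^*D$, $H_2=DD^*$ are nonnegative self-adjoint operators. The second is the polar decomposition $D=V|D|$, where $|D|=H_1^{1/2}$. Because $D^{**}=D$, the pair $(D^*,D)$ again satisfies Hypothesis A.1. So every statement about $H_1$ has a twin about $H_2$, obtained by exchanging $D$ and $D^*$, and only one of each pair needs proof.

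For (a), the inclusion $\ker(D)\subseteq\ker(D^*D)$ is trivial. Conversely, if $f\in\operatorname{dom}(H_1)$ and $H_1f=0$, then $\|Df\|^2=(f,D^*Df)=0$. The identity $\ker(D)=(\operatorname{ran}(D^*))^\perp$ follows from $D=D^{**}$: we have $g\perp\operatorname{ran}(D^*)$ exactly when $(g,D^*h)=0$ for all $h\in\operatorname{dom}(D^*)$, and this says $g\in\operatorname{dom}(D^{**})$ with $D^{**}g=0$. Applying the same argument to $D^*$ gives the second chain of equalities.

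For (b), I will use the partial isometry $V$ in the polar decomposition. Its initial space is $\overline{\operatorname{ran}(D^*)}=(\ker D)^\perp$ and its final space is $\overline{\operatorname{ran}(D)}=(\ker D^*)^\perp$, both by (a). From $D=V|D|$ and $D^*=|D|V^*$ one gets $DD^*=V\,D^*D\,V^*$, with matching domains. Hence $V$, restricted to $(\ker D)^\perp$, is a unitary onto $(\ker D^*)^\perp$ that intertwines $H_1[I-P_{\ker D}]$ with $H_2[I-P_{\ker D^*}]$. Both operators vanish on the kernels, so their spectra agree away from $0$, which is essential isospectrality.

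For (c), take $z\in\rho(H_1)\cap\rho(H_2)$ and prove the second inclusion; the first is the same argument with $D\leftrightarrow D^*$. Let $f\in\operatorname{dom}(D)$, which is the domain of the right-hand side, and set $u=(H_1-z)^{-1}f\in\operatorname{dom}(D^*D)$. Then $Du\in\operatorname{dom}(D^*)$ and $D^*Du=f+zu$. Both $f$ and $u$ lie in $\operatorname{dom}(D)$, so $D^*Du\in\operatorname{dom}(D)$, that is, $Du\in\operatorname{dom}(DD^*)$. Moreover $DD^*(Du)=Df+zDu$, so $(H_2-z)Du=Df$ and therefore $Du=(H_2-z)^{-1}Df$. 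Applying $D^*$ gives
\[
D^*(H_2-z)^{-1}Df=D^*Du=f+z(H_1-z)^{-1}f,
\]
which is the claimed inclusion.

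The main obstacle I expect is the domain bookkeeping in (b). One must check that $V\operatorname{dom}(H_1)\cap(\ker D)^\perp=\operatorname{dom}(H_2)\cap(\ker D^*)^\perp$, and that the formula $D^*=|D|V^*$ holds with the correct domain. I would settle both by citing the standard polar decomposition theorem for closed operators (e.g.\ Kato), rather than verifying them by hand.
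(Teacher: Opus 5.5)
Your proof is correct. The paper gives no proof of its own for this theorem; it imports the result from the appendix of \cite{Supersymmetry}. The standard argument behind that result is the one you give: von Neumann's theorem with $D^{**}=D$ and the $D\leftrightarrow D^*$ symmetry for (a), the polar decomposition $D=V|D|$ with $D^*=|D|V^*$ for (b), and the direct resolvent computation for (c).

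You are right to read (b) as a statement about the parts of $H_1$ and $H_2$ in $(\ker D)^\perp$ and $(\ker D^*)^\perp$. Taken literally on the full spaces, the unitary equivalence fails whenever $\dim\ker D\neq\dim\ker D^*$; the unilateral shift, with $S^*S=I$ and $SS^*=I-P_{e_0}$, is an example.
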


\bibliographystyle{alpha}
\bibliography{sample}

\end{document}